\newtheorem{theorem}{Theorem}[section]
\newtheorem{definition}[theorem]{Definition}
\newtheorem{proposition}[theorem]{Proposition}
\begin{document}

\title[Quantum groups]{Quantum groups, from a functional analysis perspective}

\author[T. Banica]{Teodor Banica}
\address{T.B.: Department of Mathematics, University of Cergy-Pontoise, F-95000 Cergy-Pontoise, France. {\tt teo.banica@gmail.com}}

\subjclass[2010]{46L65}
\keywords{Quantum group, Operator algebra}

\begin{abstract}
It is well-known that any compact Lie group appears as closed subgroup of a unitary group, $G\subset U_N$. The unitary group $U_N$ has a free analogue $U_N^+$, and the study of the closed quantum subgroups $G\subset U_N^+$ is a problem of general interest. We review here the basic tools for dealing with such quantum groups, with all the needed preliminaries included, and we discuss as well a number of more advanced topics.
\end{abstract}

\maketitle

{\em Dedicated to the memory of Stefan Banach.}

\bigskip

\tableofcontents

\section*{Introduction}

The unitary group $U_N$ has a free analogue $U_N^+$, whose standard coordinate functions $u_{ij}\in C(U_N^+)$ still form a unitary matrix, whose transpose is unitary too, but no longer commute. To be more precise, consider the following universal $C^*$-algebra:
$$C(U_N^+)=C^*\left((u_{ij})_{i,j=1,\ldots,N}\Big|u^*=u^{-1},u^t=\bar{u}^{-1}\right)$$

This algebra has a comultiplication, a counit and an antipode, constructed by using the universal property of $C(U_N^+)$, according to the following formulae:
$$\Delta(u_{ij})=\sum_ku_{ik}\otimes u_{kj}\quad,\quad\varepsilon(u_{ij})=\delta_{ij}\quad,\quad S(u_{ij})=u_{ji}^*$$

Thus $U_N^+$ is a compact quantum group in the sense of Woronowicz \cite{wo2}, \cite{wo3}. We are interested here in the closed subgroups $G\subset U_N^+$, the main examples being:

\medskip

\begin{enumerate}
\item The compact Lie groups, $G\subset U_N$.

\item The duals $G=\widehat{\Gamma}$ of the finitely generated groups $\Gamma=<g_1,\ldots,g_N>$. 

\item Deformations of the compact Lie groups, with parameter $q=-1$.

\item Liberations, half-liberations, quantum permutation groups, and more.
\end{enumerate}

\medskip

We will present the basic tools for dealing with such objects. The whole text is basically self-contained, notably containing an explanation of Woronowicz's papers \cite{wo2}, \cite{wo3}, with an updated terminology and notations, and under the assumption $G\subset U_N^+$, which simplifies a number of things. There is as well some supplementary material, concerning some additional methods, which are more recent and specialized. 

\bigskip

\noindent {\bf Disclaimer.} The closed subgroups $G\subset U_N^+$ that we discuss here do not cover several interesting examples of ``functional analytic'' quantum groups, such as:
\begin{enumerate}
\item The $q$-deformations of the compact Lie groups $G\subset U_N$, with general Drinfeld parameter $q\in\mathbb T$, or with general Woronowicz parameter $q\in\mathbb R$. For a functional analytic treatment of these quantum groups, we refer to \cite{wen}, \cite{wo2}.

\item The locally compact groups $G\subset GL_N(\mathbb C)$, and their various known quantum group versions. For some general theory and some examples here, struggling however with the existence of the Haar measure, we refer to \cite{kva}, \cite{pwo}.

\item The quantum group type objects underlying the combinatorics of certain ``specially patterned'' random matrices. The subject here is very wide, and for a number of such constructions, we refer for instance to \cite{fsn}, \cite{mpo}.
\end{enumerate}

\medskip

We believe, however, that there is a way of linking the present material with all these constructions. Regarding (1), a natural idea here, which has not been tried yet, would be that of studying first the $q$-deformations of the easy quantum groups, at $q=\pm i$. Regarding (2), the half-liberation machinery from \cite{bdu} applies a priori to the locally compact case as well, and this remains to be explored. Finally, regarding (3), some advances here should normally come along the lines of \cite{bne}, and of subsequent papers.

\bigskip

Regarding the possible applications of all this, the problem is open. Our belief is that the closed subgroups $G\subset U_N^+$, and the theory that has been developed for them, can be of help, in connection with a number of questions in quantum physics. Unfortunately, we have nothing concrete for the moment. This is a matter of time.

\bigskip

The paper is organized in 4 parts, as follows:

\medskip

\begin{enumerate}
\item Sections 1-2 are an introduction to the closed subgroups $G\subset U_N^+$, with the main examples ($O_N,O_N^*,O_N^+,U_N,U_N^*,U_N^+$) explained in detail.

\item Sections 3-4 are a presentation of Woronowicz's theory in \cite{wo1}, \cite{wo2}, with the main examples and their twists ($\bar{O}_N,\bar{O}_N^*,O_N^+,\bar{U}_N,\bar{U}_N^*,U_N^+$) worked out.

\item Sections 5-6 contain more specialized results, regarding the corresponding reflection groups $(H_N,H_N^*,H_N^+,K_N,K_N^*,K_N^+)$, and other quantum groups.

\item Sections 7-8 contain results regarding the closed subgroups $H\subset G$, the maximal tori $\widehat{\Gamma}\subset G$, and the matrix models $\pi:C(G)\to M_K(C(X))$.
\end{enumerate}

\bigskip

\noindent {\bf Acknowledgements.} I would like to thank Julien Bichon, Alex Chirvasitu, Beno\^it Collins, Steve Curran, Uwe Franz, Ion Nechita, Adam Skalski, Roland Speicher and Roland Vergnioux, for substantial joint work on the subject, and for their heavy influence on the point of view developed here. Thanks to Poufinette, too.

\section{Operator algebras}

In order to introduce the compact quantum groups, we will use the space/algebra correspondence coming from operator algebra theory. Let us begin with:

\begin{proposition}
Given a Hilbert space $H$, the linear operators $T:H\to H$ which are bounded, in the sense that $||T||=\sup_{||x||\leq1}||Tx||$ is finite, form a complex algebra with unit, denoted $B(H)$. This algebra has the following properties:
\begin{enumerate}
\item $B(H)$ is complete with respect to $||.||$, and so we have a Banach algebra. 

\item $B(H)$ has an involution $T\to T^*$, given by $<Tx,y>=<x,T^*y>$.
\end{enumerate}
In addition, the norm and the involution are related by the formula $||TT^*||=||T||^2$.
\end{proposition}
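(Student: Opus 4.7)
The plan is to verify the three bundled claims in order: algebra structure, completeness, existence and boundedness of the adjoint, and finally the C*-identity. The algebra structure is immediate once one checks that $\|S+T\|\leq\|S\|+\|T\|$, $\|\lambda T\|=|\lambda|\,\|T\|$, and $\|ST\|\leq\|S\|\,\|T\|$, which all follow directly from the definition of the operator norm and the fact that composition of linear maps is linear. The unit is the identity operator, whose norm is $1$.

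For completeness, I would take a Cauchy sequence $(T_n)\subset B(H)$ and use the estimate $\|T_nx-T_mx\|\leq\|T_n-T_m\|\,\|x\|$ to conclude that $(T_nx)$ is Cauchy in $H$ for every $x$. Define $Tx=\lim_nT_nx$. Linearity of $T$ follows by passing to the limit in the defining equations of linearity; boundedness, with $\|T\|\leq\sup_n\|T_n\|$, follows from continuity of the norm; and convergence $T_n\to T$ in operator norm is obtained by fixing $\varepsilon>0$, choosing $N$ so that $\|T_n-T_m\|<\varepsilon$ for $n,m\geq N$, applying this to a unit vector $x$, and letting $m\to\infty$.

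For the involution, I would fix $T\in B(H)$ and $y\in H$ and consider the linear functional $x\mapsto\langle Tx,y\rangle$, which is bounded of norm $\leq\|T\|\,\|y\|$ by Cauchy--Schwarz. Riesz's representation theorem provides a unique vector, which we call $T^*y$, such that $\langle Tx,y\rangle=\langle x,T^*y\rangle$. Uniqueness in Riesz's theorem gives linearity of $T^*$, and the norm estimate gives $\|T^*y\|\leq\|T\|\,\|y\|$, so $T^*\in B(H)$ with $\|T^*\|\leq\|T\|$.

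The main (mild) obstacle is the C*-identity $\|TT^*\|=\|T\|^2$. I would first upgrade $\|T^*\|\leq\|T\|$ to an equality by applying the inequality to $T^*$ and using $T^{**}=T$ (which is immediate from the defining relation). Then the submultiplicativity of the norm yields $\|TT^*\|\leq\|T\|\,\|T^*\|=\|T\|^2$. For the reverse inequality, I would compute, for a unit vector $x$,
\[
\|Tx\|^2=\langle Tx,Tx\rangle=\langle T^*Tx,x\rangle\leq\|T^*T\|,
\]
and take the supremum over $x$ to get $\|T\|^2\leq\|T^*T\|$; applying this to $T^*$ in place of $T$ gives $\|T\|^2=\|T^*\|^2\leq\|TT^*\|$, closing the loop.
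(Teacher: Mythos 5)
Your proposal is correct and follows essentially the same route as the paper: the same three norm inequalities for the algebra structure, the same pointwise-limit argument for completeness, the adjoint via the Riesz representation of $x\mapsto\langle Tx,y\rangle$, and the identical two-step argument for the $C^*$-identity (submultiplicativity for one inequality, $\|Tx\|^2=\langle x,T^*Tx\rangle\leq\|T^*T\|$ plus the substitution $T\to T^*$ for the other). The only cosmetic difference is that you obtain $\|T^*\|=\|T\|$ from $\|T^*\|\leq\|T\|$ and $T^{**}=T$, while the paper reads it off a double supremum of $\langle Tx,y\rangle$; both are standard and equivalent.
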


\begin{proof}
The fact that we have indeed an algebra follows from:
$$||S+T||\leq||S||+||T||\quad,\quad||\lambda T||=|\lambda|\cdot||T||\quad,\quad||ST||\leq||S||\cdot||T||$$

Regarding now (1), if $\{T_n\}\subset B(H)$ is Cauchy then $\{T_nx\}$ is Cauchy for any $x\in H$, so we can define the limit $T=\lim_{n\to\infty}T_n$ by setting $Tx=\lim_{n\to\infty}T_nx$.

As for (2), here the existence of $T^*$ comes from the fact that $\phi(x)=<Tx,y>$ being a linear map $H\to\mathbb C$, we must have $\phi(x)=<x,T^*y>$, for a certain vector $T^*y\in H$. Moreover, since this vector is unique, $T^*$ is unique too, and we have as well:
$$(S+T)^*=S^*+T^*\quad,\quad (\lambda T)^*=\bar{\lambda}T^*\quad,\quad (ST)^*=T^*S^*\quad,\quad (T^*)^*=T$$

Observe also that we have indeed $T^*\in B(H)$, because:
$$||T||=\sup_{||x||=1}\sup_{||y||=1}<Tx,y>=\sup_{||y||=1}\sup_{||x||=1}<x,T^*y>=||T^*||$$

Regarding the last assertion, we have $||TT^*||\leq||T||\cdot||T^*||=||T||^2$. Also, we have:
$$||T||^2=\sup_{||x||=1}|<Tx,Tx>|=\sup_{||x||=1}|<x,T^*Tx>|\leq||T^*T||$$

By replacing $T\to T^*$ we obtain from this $||T||^2\leq||TT^*||$, and we are done.
\end{proof}

In what follows we will be interested in the algebras of operators, rather than in the operators themselves. The axioms here, coming from Proposition 1.1, are as follows:

\begin{definition}
A unital $C^*$-algebra is a complex algebra with unit $A$, having a norm $a\to||a||$ which makes it into a Banach algebra (the Cauchy sequences converge), and having as well an involution $a\to a^*$, which satisfies $||aa^*||=||a||^2$, for any $a\in A$.
\end{definition}

As a basic example, $B(H)$ is a $C^*$-algebra. More generally, any closed $*$-subalgebra $A\subset B(H)$ is a $C^*$-algebra. The celebrated Gelfand-Naimark-Segal (GNS) theorem states that any $C^*$-algebra appears in fact in this way. We will be back to this later on.

One very interesting feature of Definition 1.2, making the link with several branches of abstract mathematics, comes from the following basic observation:

\begin{proposition}
If $X$ is an abstract compact space,  the algebra $C(X)$ of continuous functions $f:X\to\mathbb C$ is a $C^*$-algebra, with norm $||f||=\sup_{x\in X}|f(x)|$, and involution $f^*(x)=\overline{f(x)}$. This algebra is commutative: $fg=gf$, for any $f,g\in C(X)$.
\end{proposition}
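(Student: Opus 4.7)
The plan is to verify each axiom of Definition 1.2 pointwise, using the compactness of $X$ at the two places where it matters. First I would check the algebraic structure: under pointwise addition, multiplication and scalar multiplication, $C(X)$ is a commutative complex algebra with unit $1_X$, and the map $f \mapsto \bar f$ is an involution satisfying $(f+g)^*=f^*+g^*$, $(\lambda f)^*=\bar\lambda f^*$, $(fg)^* = f^* g^*$ and $(f^*)^*=f$. Commutativity is immediate from $f(x)g(x)=g(x)f(x)$ in $\mathbb C$.

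Next I would verify that $\|f\|=\sup_{x\in X}|f(x)|$ is a well-defined norm. Compactness of $X$ is used here: $|f|$ is continuous, hence attains its maximum, so $\|f\|<\infty$. The norm axioms $\|f+g\|\leq\|f\|+\|g\|$, $\|\lambda f\|=|\lambda|\cdot\|f\|$, and $\|f\|=0\Rightarrow f=0$ follow pointwise from the corresponding properties of $|\cdot|$ on $\mathbb C$, and submultiplicativity $\|fg\|\leq\|f\|\cdot\|g\|$ likewise.

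The step that requires a little argument is completeness. Given a Cauchy sequence $(f_n)\subset C(X)$ with respect to $\|\cdot\|$, for each $x\in X$ the sequence $(f_n(x))$ is Cauchy in $\mathbb C$, so it converges to some value $f(x)$. The Cauchy condition gives, for $\varepsilon>0$, an index $N$ with $|f_n(x)-f_m(x)|<\varepsilon$ for all $n,m\geq N$ and all $x$; letting $m\to\infty$ yields $|f_n(x)-f(x)|\leq\varepsilon$ uniformly in $x$, so $f_n\to f$ uniformly. A standard $3\varepsilon$-argument then shows $f$ is continuous, giving $f\in C(X)$ and completeness. This is the main ``non-formal'' point of the proof.

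Finally, the $C^*$-identity is almost trivial in the commutative setting: for any $x\in X$,
$$|(ff^*)(x)| = |f(x)\overline{f(x)}| = |f(x)|^2,$$
so taking the supremum over $x\in X$ gives $\|ff^*\|=\sup_x|f(x)|^2=(\sup_x|f(x)|)^2=\|f\|^2$. This completes the verification that $C(X)$ is a commutative unital $C^*$-algebra, and commutativity was already noted. I expect no real obstacles; the only place where compactness enters in an essential way is in ensuring $\|f\|<\infty$ and, implicitly, in making $C(X)$ a natural Banach space of bounded continuous functions.
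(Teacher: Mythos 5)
Your proof is correct and follows the same route as the paper: pointwise verification of the axioms, with the $C^*$-identity $\|ff^*\|=\sup_x|f(x)|^2=\|f\|^2$ and commutativity checked exactly as in the text. The paper simply declares the remaining axioms (norm properties, completeness via uniform limits) ``trivial,'' whereas you spell them out; there is no difference in substance.
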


\begin{proof}
Almost everything here is trivial. Observe also that we have indeed:
$$||ff^*||=\sup_{x\in X}|f(x)\overline{f(x)}|=\sup_{x\in X}|f(x)|^2=||f||^2$$

Finally, we have $fg=gf$, since $f(x)g(x)=g(x)f(x)$ for any $x\in X$.
\end{proof}

In order to work out the precise space/algebra correspondence coming from Proposition 1.3, we will need some basic spectral theory. Let us begin with:

\begin{definition}
Given a complex algebra $A$, the spectrum of an element $a\in A$ is
$$\sigma(a)=\left\{\lambda\in\mathbb C\big|a-\lambda\not\in A^{-1}\right\}$$
where $A^{-1}\subset A$ is the set of invertible elements.
\end{definition}

As a basic example, the spectrum of a usual matrix $M\in M_N(\mathbb C)$ is the collection of its eigenvalues. Also, the spectrum of a continuous function $f\in C(X)$ is its image.

Given an element $a\in A$, its spectral radius $\rho (a)$ is by definition the radius of the smallest disk centered at $0$ containing $\sigma(a)$. We have the following result:

\begin{proposition}
Let $A$ be a $C^*$-algebra.
\begin{enumerate}
\item The spectrum of a norm one element is in the unit disk.

\item The spectrum of a unitary element $(a^*=a^{-1}$) is on the unit circle. 

\item The spectrum of a self-adjoint element ($a=a^*$) consists of real numbers. 

\item The spectral radius of a normal element ($aa^*=a^*a$) is equal to its norm.
\end{enumerate}
\end{proposition}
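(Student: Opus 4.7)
The plan is to establish the four items in order, leveraging the $C^*$-identity $\|aa^*\|=\|a\|^2$ at each stage, and finally invoking Gelfand's spectral radius formula $\rho(a)=\lim_n \|a^n\|^{1/n}$, which is a standard Banach algebra fact.

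For (1), I would use the geometric series trick: if $\|a\|\le 1$ and $|\lambda|>1$, then $\|a/\lambda\|<1$, so the series $\sum_{n\ge 0}(a/\lambda)^n$ converges in $A$ by completeness, and its sum is an inverse for $1-a/\lambda$. Hence $a-\lambda=-\lambda(1-a/\lambda)$ is invertible, so $\lambda\notin\sigma(a)$. For (2), note first that the $C^*$-identity forces $\|a\|^2=\|aa^*\|=\|1\|=1$ for any unitary, and the same for $a^{-1}=a^*$. Applying (1) to both $a$ and $a^{-1}$, together with the purely algebraic identity $\sigma(a^{-1})=\{\lambda^{-1}:\lambda\in\sigma(a)\}$ (which follows from $a^{-1}-\mu=-\mu a^{-1}(a-\mu^{-1})$), we get $\sigma(a)\subset\overline{\mathbb D}\cap\overline{\mathbb D}^{-1}=\mathbb T$.

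For (3), the cleanest route is the ``unitary exponential'' trick. Given $a=a^*$, set $u=\exp(ia)=\sum_{n\ge 0}(ia)^n/n!$, which converges absolutely in $A$. Because $a$ and $-a$ commute and the adjoint is continuous and antilinear, $u^*=\exp(-ia)$, and the usual manipulation of the exponential series gives $uu^*=u^*u=1$, so $u$ is unitary. By a direct spectral mapping argument (if $a-\lambda$ is invertible then so is $\exp(ia)-\exp(i\lambda)$, via factoring the power series), we have $\exp(i\sigma(a))\subset\sigma(u)$, and by (2) the right-hand side lies in $\mathbb T$; this forces $\sigma(a)\subset\mathbb R$. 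Alternatively, a bare-hands approach: if $\lambda=x+iy\in\sigma(a)$, then $iy\in\sigma(a-x)$, so for every real $t$ we have $i(y+t)\in\sigma(a-x+it)$, and (1) combined with $\|a-x+it\|^2=\|(a-x)^2+t^2\|\le \|a-x\|^2+t^2$ gives $(y+t)^2\le\|a-x\|^2+t^2$ for all $t\in\mathbb R$, which forces $y=0$.

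For (4), I would first handle the self-adjoint case: the $C^*$-identity gives $\|a^2\|=\|aa^*\|=\|a\|^2$, and iterating yields $\|a^{2^k}\|=\|a\|^{2^k}$, so Gelfand's formula gives $\rho(a)=\|a\|$. For a normal $a$, the normality $aa^*=a^*a$ lets us write $\|a^n\|^2=\|a^n(a^n)^*\|=\|(aa^*)^n\|$; since $aa^*$ is self-adjoint, taking $n$-th roots and passing to the limit gives $\rho(a)^2=\rho(aa^*)=\|aa^*\|=\|a\|^2$.

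The main obstacle is item (3): making the spectral mapping step for $\exp(ia)$ rigorous requires care, since one has to check that the inverse constructed from the power series actually lies in $A$; the ``bare-hands'' variant avoids this but is less transparent. Everything else is essentially bookkeeping on top of completeness, the $C^*$-identity, and Gelfand's spectral radius formula.
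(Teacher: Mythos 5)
Your proposal is correct, and items (1), (2) and (4) essentially coincide with the paper's argument: the geometric series for (1), the reduction of (2) to (1) via $\sigma(a^{-1})=\sigma(a)^{-1}$ (the paper phrases this as spectral mapping for the rational function $z^{-1}$), and for (4) the chain $\|a^{2^k}\|=\|a\|^{2^k}$ in the self-adjoint case followed by $a^n(a^n)^*=(aa^*)^n$ and $\rho(a)^2=\rho(aa^*)=\|a\|^2$ in the normal case. The genuine divergence is in (3) and in the treatment of the spectral radius formula. For (3) the paper applies spectral mapping to the Cayley-type transform $(z+it)/(z-it)$, which sends a self-adjoint element to a unitary and the real line to the circle, so that (3) drops out of (2); you instead propose either the exponential unitary $\exp(ia)$ or the bare-hands estimate $(y+t)^2\le\|a-x\|^2+t^2$ for all real $t$. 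The bare-hands variant is complete and correct, and has the advantage of needing nothing beyond (1) and the $C^*$-identity; the Cayley and exponential routes are morally the same idea (manufacture a unitary from $a$ and quote (2)), with the Cayley transform being algebraically lighter since it avoids power series. One caution on your exponential route: the implication you state, ``if $a-\lambda$ is invertible then so is $\exp(ia)-\exp(i\lambda)$'', is false as written (take $\lambda$ replaced by $\lambda+2\pi$) and is in any case the wrong direction; what the factorization $\exp(ia)-\exp(i\lambda)=(a-\lambda)g(a)$ with $g(a)$ commuting with $a-\lambda$ actually yields is the converse, namely that invertibility of $\exp(ia)-\exp(i\lambda)$ forces invertibility of $a-\lambda$, which is exactly the inclusion $\exp(i\sigma(a))\subset\sigma(u)$ you need. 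Finally, in (4) you invoke the full Gelfand spectral radius formula as a black box, whereas the paper proves the one inequality it needs, $\rho(a)\ge\lim\|a^n\|^{1/n}$, via the contour integral $\int_{|z|=\rho}z^n(z-a)^{-1}dz=a^n$; your version is fine provided that formula is admitted as known, but the paper's version is more self-contained.
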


\begin{proof}
Here (1) is clear, by using the formula $1/(1-x)=1+x+x^2+\ldots$ 

Regarding now the middle assertions, we can use here the elementary fact that if $f$ is a rational function having poles outside $\sigma(a)$, then $\sigma(f(a))=f(\sigma(a))$. Indeed, by using the functions $z^{-1}$ and $(z+it)/(z-it)$, we obtain the results.

From (1) we obtain $\rho(a)\leq ||a||$. For the converse, if we fix $\rho>\rho(a)$, we have:
$$\int_{|z|=\rho}\frac{z^n}{z -a}\,dz =\sum_{k=0}^\infty\left(\int_{|z|=\rho}z^{n-k-1}dz\right) a^k=a^n$$

By applying the norm and taking $n$-th roots we get $\rho\geq\lim {||a^n||^{1/n}}$.

In the case $a=a^*$ we have $||{a^n}||=||{a}||^n$ for any exponent of the form $n=2^k$, and by taking $n$-th roots we get $\rho\geq ||{a}||$. This gives the missing inequality $\rho(a)\geq ||a||$.

In the general case $aa^*=a^*a$ we have $a^n(a^n)^*=(aa^*)^n$, and we get $\rho(a)^2=\rho(aa^*)$. Now since $aa^*$ is self-adjoint, we get $\rho(aa^*)=||{a}||^2$, and we are done.
\end{proof}

We are now in position of proving a key result:

\begin{theorem}[Gelfand]
Any commutative $C^*$-algebra is the form $C(X)$, with its ``spectrum'' $X=Spec(A)$ appearing as the space of characters $\chi :A\to\mathbb C$.
\end{theorem}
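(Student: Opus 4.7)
The plan is to construct the Gelfand transform $\Phi:A\to C(X)$, defined by $\Phi(a)(\chi)=\chi(a)$, where $X=\mathrm{Spec}(A)$ is the set of non-zero algebra homomorphisms $\chi:A\to\mathbb C$, topologized by pointwise convergence (the weak-$*$ topology inherited from $A^*$). The goal is to verify that $\Phi$ is an isometric $*$-algebra isomorphism.

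First I would check that $X$ is a compact Hausdorff space. The key estimate is that every character satisfies $|\chi(a)|\leq\|a\|$: since $a-\chi(a)\in\ker\chi$ is non-invertible, one has $\chi(a)\in\sigma(a)$, which lies in the disk of radius $\|a\|$ by Proposition 1.5(1). Hence $X$ sits inside the closed unit ball of $A^*$, which is weak-$*$ compact by Banach-Alaoglu; and the relations $\chi(ab)=\chi(a)\chi(b)$, $\chi(1)=1$ are preserved under pointwise limits, so $X$ is weak-$*$ closed, thus compact.

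Next I would verify that $\Phi$ is a unital $*$-homomorphism. Linearity, multiplicativity, and unitality are built into the character axioms. The nontrivial point is that $\chi(a^*)=\overline{\chi(a)}$: writing $a=h+ik$ with $h,k$ self-adjoint reduces this to the claim $\chi(h)\in\mathbb R$ whenever $h=h^*$, which follows from $\chi(h)\in\sigma(h)\subset\mathbb R$ via Proposition 1.5(3).

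The main obstacle, and the step requiring real work, is the converse spectral identity $\sigma(a)=\{\chi(a):\chi\in X\}$, i.e.\ the existence of enough characters. Given $\lambda\in\sigma(a)$, the ideal $(a-\lambda)A$ is proper; by Zorn's lemma it lies in a maximal ideal $\mathfrak m$, which is automatically closed (since $A^{-1}$ is open), and the quotient $A/\mathfrak m$ is a Banach field, hence equals $\mathbb C$ by the Gelfand-Mazur theorem, producing a character $\chi$ with $\chi(a)=\lambda$. Granted this, for self-adjoint $a$ we get $\|\Phi(a)\|_\infty=\sup_\chi|\chi(a)|=\rho(a)=\|a\|$ by Proposition 1.5(4); for general $a$, the $C^*$-identity applied to $aa^*$ together with multiplicativity of $\Phi$ yields $\|\Phi(a)\|^2=\|\Phi(aa^*)\|=\|aa^*\|=\|a\|^2$, so $\Phi$ is isometric, and in particular injective with closed image. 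Finally, $\Phi(A)\subset C(X)$ is a unital $*$-subalgebra that separates points of $X$ by the very definition of the weak-$*$ topology, so Stone-Weierstrass forces $\Phi(A)=C(X)$, completing the proof.
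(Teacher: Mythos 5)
Your proof is correct and follows essentially the same route as the paper's: the Gelfand transform to the character space, involutivity via the decomposition into self-adjoint parts and the reality of their spectra, isometry from the spectral radius formula for normal elements, and surjectivity by Stone--Weierstrass. The only difference is that you make explicit a step the paper leaves implicit, namely that every spectral value is attained by a character (via maximal ideals and Gelfand--Mazur), which is indeed what justifies identifying $\sup_{\chi}|\chi(a)|$ with $\rho(a)$.
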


\begin{proof}
Given a commutative $C^*$-algebra $A$, we can define indeed $X$ to be the set of characters $\chi :A\to\mathbb C$, with the topology making continuous all the evaluation maps $ev_a:\chi\to\chi(a)$. Then $X$ is a compact space, and $a\to ev_a$ is a morphism of algebras $ev:A\to C(X)$. We first prove that $ev$ is involutive. We use the following formula:
$$a=\frac{a+a^*}{2}-i\cdot\frac{i(a-a^*)}{2}$$

Thus it is enough to prove the equality $ev_{a^*}=ev_a^*$ for self-adjoint elements $a$. But this is the same as proving that $a=a^*$ implies that $ev_a$ is a real function, which is in turn true, because $ev_a(\chi)=\chi(a)$ is an element of $\sigma(a)$, contained in $\mathbb R$.

Since $A$ is commutative, each element is normal, so $ev$ is isometric, $||ev_a|| =\rho(a)=||{a}||$.

It remains to prove that $ev$ is surjective. But this follows from the Stone-Weierstrass theorem, because $ev(A)$ is a closed subalgebra of $C(X)$, which separates the points.
\end{proof}

In view of Gelfand's theorem, we can now formulate:

\begin{definition}
Given an arbitrary $C^*$-algebra $A$, we write $A=C(X)$, and call $X$ a noncommutative compact space. Equivalently, the category of the noncommutative compact spaces is the category of the $C^*$-algebras, with the arrows reversed.
\end{definition}

When $A$ is commutative, the space $X$ considered above exists indeed, as a Gelfand spectrum, $X=Spec(A)$. In general, $X$ is something rather abstract, and our philosophy here will be that of studying of course $A$, but formulating our results in terms of $X$. For instance whenever we have a morphism $\Phi:A\to B$, we will write $A=C(X),B=C(Y)$, and rather speak of the corresponding morphism $\phi:Y\to X$. And so on.

Finally, let us review the other fundamental result regarding the $C^*$-algebras, namely the representation theorem of Gelfand, Naimark and Segal. We will need:

\begin{proposition}
For an element $a\in A$, the following are equivalent:
\begin{enumerate}
\item $a$ is positive, in the sense that $\sigma(a)\subset[0,\infty)$.

\item $a=b^2$, for some $b\in A$ satisfying $b=b^*$.

\item $a=cc^*$, for some $c\in A$.
\end{enumerate}
\end{proposition}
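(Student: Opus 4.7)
The plan is to establish the cyclic implications $(1)\Rightarrow(2)\Rightarrow(3)\Rightarrow(1)$. The first uses continuous functional calculus (via Gelfand's theorem), the second is essentially a tautology, and the third --- which is the main obstacle --- requires a clever decomposition of $a$ together with the auxiliary lemma that self-adjoint elements with spectrum in $[0,\infty)$ are closed under addition.

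For $(1)\Rightarrow(2)$, reading positivity in the usual $C^*$-sense so that $a=a^*$ is implicit (otherwise (1) cannot imply the self-adjoint $b$ in (2)), I would apply Gelfand's theorem to the commutative unital $C^*$-subalgebra $C^*(a,1)$, identifying it with $C(\sigma(a))$. Since $\sigma(a)\subset[0,\infty)$, the function $t\mapsto\sqrt t$ is continuous on $\sigma(a)$, and the corresponding element $b\in C^*(a,1)$ satisfies $b=b^*$ (because $\sqrt t$ is real-valued) and $b^2=a$. The implication $(2)\Rightarrow(3)$ is then immediate by taking $c=b$, since $cc^*=bb^*=b^2=a$.

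The hard direction is $(3)\Rightarrow(1)$. From $a=cc^*$ one has $a=a^*$, so $\sigma(a)\subset\mathbb R$; applying functional calculus with $f_\pm(t)=\max(\pm t,0)$, decompose $a=a_+-a_-$ with $a_\pm$ self-adjoint, $a_+a_-=0$, and $\sigma(a_\pm)\subset[0,\infty)$. The goal is to force $a_-=0$. Setting $d=a_-c$, one computes
$$dd^*=a_-cc^*a_-=a_-(a_+-a_-)a_-=-a_-^3,$$
so by spectral mapping $\sigma(dd^*)\subset(-\infty,0]$. Since $\sigma(dd^*)$ and $\sigma(d^*d)$ agree off $\{0\}$, also $\sigma(d^*d)\subset(-\infty,0]$.

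To close, write $d=u+iv$ with $u,v$ self-adjoint, so that $dd^*+d^*d=2u^2+2v^2$. This element is simultaneously a sum of two self-adjoints with spectrum in $(-\infty,0]$ and a sum of two self-adjoints with spectrum in $[0,\infty)$. The key auxiliary lemma --- that self-adjoint elements with spectrum in $[0,\infty)$ are closed under addition, derivable from the characterization $\sigma(x)\subset[0,\infty)\Leftrightarrow\|t-x\|\leq t$ for $t\geq\|x\|$ together with the triangle inequality --- then forces its spectrum to lie in $[0,\infty)\cap(-\infty,0]=\{0\}$. Being self-adjoint and hence normal, its norm equals its spectral radius, so $2u^2+2v^2=0$. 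Then $u^2=-v^2$ has spectrum $\{0\}$, forcing $u=v=0$, so $d=0$, so $a_-^3=0$, so $a_-=0$ by functional calculus. Hence $a=a_+$ and $\sigma(a)\subset[0,\infty)$, as required.
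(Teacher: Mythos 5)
Your proof is correct and follows essentially the same route as the paper's: the Gelfand/functional-calculus square root for $(1)\Rightarrow(2)$, the trivial $(2)\Rightarrow(3)$, and for $(3)\Rightarrow(1)$ the auxiliary element $d=a_-c$ with $-dd^*\geq 0$, the decomposition $d=x+iy$ giving $dd^*+d^*d=2(x^2+y^2)$, and the fact that $\sigma(de)$ and $\sigma(ed)$ coincide off $\{0\}$. You merely spell out two points the paper leaves implicit, namely that $(1)$ must be read as including $a=a^*$, and the lemma that sums of positive elements are positive.
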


\begin{proof}
Regarding $(1)\implies(2)$, observe that $\sigma(a)\subset\mathbb R$ implies $a=a^*$. Thus the algebra $<a>$ is commutative, and by using the Gelfand theorem, we can set $b=\sqrt{a}$. 

The implication $(2)\implies(3)$ is trivial, because we can set $c=b$. Observe that $(2)\implies(1)$ is clear too, because we have $\sigma(a)=\sigma(b^2)=\sigma(b)^2\subset\mathbb R^2=[0,\infty)$.

For $(3)\implies(1)$, we proceed by contradition. By multiplying $c$ by a suitable element of $<cc^*>$, we are led to the existence of an element $d\neq0$ satisfying $-dd^*\geq0$. By writing $d=x+iy$ with $x=x^*,y=y^*$ we have $dd^*+d^*d=2(x^2+y^2)\geq0$, and so $d^*d\geq0$. But this contradicts the elementary fact that $\sigma(de),\sigma(ed)$ must coincide outside $\{0\}$.
\end{proof}

We will need as well the following definition:

\begin{definition}
Consider the linear continuous maps $\varphi:A\to\mathbb C$, called states of $A$.
\begin{enumerate}
\item $\varphi$ is called positive when $a\geq0\implies\varphi(a)\geq0$.

\item $\varphi$ is called faithful and positive when $a\geq0,a\neq0\implies\varphi(a)>0$.
\end{enumerate}
\end{definition}

In the commutative case, $A=C(X)$, the states are of the form $\varphi(f)=\int_Xf(x)d\mu(x)$, with $\mu$ being positive/strictly positive in order for $\varphi$ to be positive/faithful and positive. In analogy with the fact that any compact space $X$ has a probability measure $\mu$, one can prove that any $C^*$-algebra $A$ has a faithful positive state $\varphi:A\to\mathbb C$. See \cite{ped}.

With these ingredients in hand, we can now state:

\begin{theorem}[GNS theorem]
Let $A$ be a $C^*$-algebra.
\begin{enumerate}
\item $A$ appears as a closed $*$-subalgebra $A\subset B(H)$, for some Hilbert space $H$. 

\item When $A$ is separable (usually the case), $H$ can be chosen to be separable.

\item When $A$ is finite dimensional, $H$ can be chosen to be finite dimensional. 
\end{enumerate}
\end{theorem}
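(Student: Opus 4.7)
The plan is to construct the required representation by the standard GNS procedure, using as input the faithful positive state $\varphi:A\to\mathbb C$ whose existence was granted in the discussion preceding the theorem. First I would equip $A$ with the sesquilinear form $\langle a,b\rangle=\varphi(b^*a)$. Positivity of $\varphi$, combined with Proposition 1.8(3), shows this form is positive semi-definite, and faithfulness upgrades this to positive definite, so $\langle\cdot,\cdot\rangle$ is an inner product on $A$. Let $H$ be the Hilbert space completion. Since $\|a\|_H^2=\varphi(a^*a)\leq\varphi(1)\,\|a\|^2$ (using that $\|a\|^2-a^*a\geq0$ and $\varphi$ is positive), the canonical map $A\to H$ is continuous.

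Next I would define $\pi:A\to B(H)$ by left multiplication, $\pi(a)b=ab$ for $b\in A$, extended by continuity. The key technical point is that $\pi(a)$ is bounded: the operator inequality $b^*a^*ab\leq\|a\|^2\,b^*b$ (which holds in any $C^*$-algebra, since $\|a\|^2\cdot 1-a^*a$ is positive, hence of the form $c^*c$, so $b^*(\|a\|^2-a^*a)b=(cb)^*(cb)\geq0$) yields $\|\pi(a)b\|_H^2\leq\|a\|^2\|b\|_H^2$ upon applying $\varphi$. It is routine to check that $\pi$ is a unital $*$-homomorphism, using $\langle\pi(a)b,c\rangle=\varphi(c^*ab)=\langle b,\pi(a^*)c\rangle$.

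To get assertion (1) I would then show $\pi$ is isometric with closed image. Injectivity follows from faithfulness of $\varphi$: if $\pi(a)=0$ then $\pi(a)1=a$ is the zero vector in $H$, i.e.\ $\varphi(a^*a)=0$, forcing $a=0$. Injectivity plus the $C^*$ identity implies isometry by a standard spectral argument: for normal $x$, $\sigma(\pi(x))=\sigma(x)$ (any $\lambda\in\sigma(x)\setminus\sigma(\pi(x))$ would give a nonzero element $f(x)$ with $f(\pi(x))=0$ via continuous functional calculus, contradicting injectivity), so by Proposition 1.5(4) applied to $a^*a$ one obtains $\|\pi(a)\|^2=\|\pi(a^*a)\|=\rho(\pi(a^*a))=\rho(a^*a)=\|a^*a\|=\|a\|^2$. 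Being isometric, $\pi$ has closed image, completing (1). I expect this spectral step, bridging injectivity to isometry, to be the most delicate point of the argument.

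Finally, assertions (2) and (3) are essentially free from the construction. Since $A$ sits as a dense subspace of $H$ via a continuous map, any countable dense subset of $A$ maps to a countable dense subset of $H$, giving (2). For (3), if $\dim_{\mathbb C}A<\infty$ then $A$ is already norm-complete in any inner product norm, so $H=A$ as vector spaces and $\dim H\leq\dim A$, giving (3).
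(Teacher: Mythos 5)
Your proposal is correct and follows essentially the same route as the paper: the GNS construction from a faithful positive state, with $H$ the completion of $A$ under the state-induced inner product and $A$ acting by left multiplication. The paper merely sketches this and defers the details to \cite{ped}, whereas you have supplied them (boundedness of left multiplication, injectivity from faithfulness, isometry via the spectral argument, and the separability and finite-dimensionality remarks), all of which check out.
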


\begin{proof}
In the commutative case, where $A=C(X)$, this algebra can be represented on $H=L^2(X)$, via $T_f(g)=fg$, provided that we have a probability measure on $X$. 

In general now, we can pick a faithful positive state $\varphi:A\to\mathbb C$, then let $H=l^2(A)$ be the completion of $A$ with respect to the scalar product $<a,b>=\varphi(ab^*)$, and finally represent $A$ on this space via $T_a(b)=ab$. For details here, we refer to \cite{ped}.
\end{proof}

\section{Quantum groups}

The quantum groups are abstract objects, generalizing the usual groups. The  most basic examples are the group duals. Let us recall indeed that associated to any discrete group $\Gamma$ is its group algebra $C^*(\Gamma)$, obtained as enveloping $C^*$-algebra of the usual group $*$-algebra $\mathbb C[\Gamma]=span(\Gamma)$. This algebra has a canonical trace, given by $tr(g)=\delta_{g,1}$ on the generators. With these conventions, we have the following result:

\begin{proposition}
Let $G$ be a compact abelian group, and $\Gamma=\widehat{G}$ be its Pontrjagin dual, formed by the characters $\chi:G\to\mathbb T$. We have then a Fourier transform isomorphism
$$C(G)\simeq C^*(\Gamma)$$
which transforms the comultiplication, counit and antipode of $C(G)$, given by
$$\Delta\varphi(g,h)=\varphi(gh)\quad,\quad\varepsilon(\varphi)=\varphi(1)\quad,\quad S\varphi(g)=\varphi(g^{-1})$$
into the comultiplication, counit and antipode of $C^*(\Gamma)$, given on  generators by:
$$\Delta(g)=g\otimes g\quad,\quad\varepsilon(g)=1\quad,\quad S(g)=g^{-1}$$
Moreover, the Haar integration over $G$ corresponds to the canonical trace of $C^*(\Gamma)$.
\end{proposition}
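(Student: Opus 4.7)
The plan is to construct the isomorphism via the Gelfand theorem (Theorem 1.6), then check the Hopf algebra and Haar integration structures by testing against the generators $g\in\Gamma$.

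First, since $\Gamma$ is abelian, the $*$-algebra $\mathbb{C}[\Gamma]$ is commutative, hence so is its enveloping $C^*$-algebra $C^*(\Gamma)$. Applying Gelfand we get $C^*(\Gamma)=C(X)$ with $X=\mathrm{Spec}(C^*(\Gamma))$, and the next task is to identify $X$ with $G$. A character $\chi:C^*(\Gamma)\to\mathbb{C}$ restricts on the unitary generators $g\in\Gamma$ to a group homomorphism $\Gamma\to\mathbb{T}$, and conversely any such homomorphism extends uniquely to a $*$-character by the universal property of $C^*(\Gamma)$. This gives $X=\mathrm{Hom}(\Gamma,\mathbb{T})=\widehat{\Gamma}$, and Pontrjagin duality yields $\widehat{\Gamma}=G$. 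The resulting isomorphism $\Phi:C^*(\Gamma)\to C(G)$ sends a generator $g\in\Gamma$ to the evaluation function $x\mapsto g(x)$, i.e.\ to the character $g:G\to\mathbb{T}$ itself, which is precisely the Fourier transform.

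To verify compatibility with $\Delta,\varepsilon,S$, since each structure map is determined by its values on the dense subalgebra $\mathbb{C}[\Gamma]$ it suffices to check on a single generator. For $\varphi=g$ viewed in $C(G)$ one computes $\Delta\varphi(x,y)=g(xy)=g(x)g(y)$, $\varepsilon(\varphi)=g(1)=1$, and $S\varphi(x)=g(x^{-1})=g^{-1}(x)$, matching exactly $\Delta(g)=g\otimes g$, $\varepsilon(g)=1$, $S(g)=g^{-1}$ on $C^*(\Gamma)$. For the Haar integration, translation invariance gives $\int_G g(h)\,dh=\int_G g(h_0h)\,dh=g(h_0)\int_G g(h)\,dh$ for any $h_0\in G$, forcing the integral to vanish when $g\neq1$ (pick $h_0$ with $g(h_0)\neq1$) and to equal $1$ when $g=1$; this is exactly $\mathrm{tr}(g)=\delta_{g,1}$, so the two functionals agree on the dense subalgebra and hence everywhere.

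The key step is the identification $\mathrm{Spec}(C^*(\Gamma))=G$: the easy direction (restriction to unitary generators) is tautological, the converse rests on the universal property of $C^*(\Gamma)$, and the final step $\widehat{\Gamma}=G$ is Pontrjagin duality, the one substantive input from outside the Gelfand framework developed in Section~1.
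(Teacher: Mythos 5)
Your proof is correct and follows the same route the paper indicates, namely Gelfand's theorem applied to the commutative algebra $C^*(\Gamma)$ combined with Pontrjagin duality to identify the spectrum with $G$, followed by routine checks on generators; the paper merely asserts these steps while you carry them out in detail. No issues.
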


\begin{proof}
The first assertion follows from the basic properties of the Pontrjagin duality, and from the Gelfand theorem. The proof of the second assertion, regarding $\Delta,\varepsilon,S$, is routine. As for the last assertion, regarding the standard traces, this is clear too.
\end{proof}

In the non-abelian case now, there is still of lot of similarity between the algebras of type $C(G)$, and those of type $C^*(\Gamma)$. As a basic result here, we have:

\begin{proposition}
The comultiplication, counit and antipode of both the algebras $C(G)$, with $G$ compact group, and $C^*(\Gamma)$, with $\Gamma$ discrete group, satisfy:
\begin{enumerate}
\item $(\Delta\otimes id)\Delta=(id\otimes\Delta)\Delta$.

\item $(id\otimes\varepsilon)\Delta=(\varepsilon\otimes id)\Delta=id$.

\item $m(id\otimes S)\Delta=m(S\otimes id)\Delta=\varepsilon(.)1$
\end{enumerate}
In addition, in both cases the square of the antipode is the identity, $S^2=id$.
\end{proposition}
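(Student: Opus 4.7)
The plan is to handle $C(G)$ and $C^*(\Gamma)$ in parallel but separately, since in each setting the three structural maps are determined by an underlying group-theoretic operation, and the four identities translate directly into the four basic group axioms: associativity, unit, inverse, and involutivity of inversion. Since $\Delta,\varepsilon$ are $*$-homomorphisms and $S$ is antimultiplicative, it will suffice in each case to verify the identities on a set of generators and then extend.

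For $C(G)$, I would identify $C(G)\otimes C(G)\simeq C(G\times G)$ and $C(G)^{\otimes 3}\simeq C(G^3)$, under which $\Delta,\varepsilon,S$ become the transposes of the multiplication $G\times G\to G$, the unit inclusion $\{1\}\hookrightarrow G$, and the inversion $g\mapsto g^{-1}$. Each identity then becomes a pointwise statement about an arbitrary $\varphi\in C(G)$: explicitly, $(\Delta\otimes id)\Delta(\varphi)(g,h,k)=\varphi((gh)k)$ versus $(id\otimes\Delta)\Delta(\varphi)(g,h,k)=\varphi(g(hk))$ reduces (1) to associativity; $(id\otimes\varepsilon)\Delta(\varphi)(g)=\varphi(g\cdot 1)=\varphi(g)$ and its mirror reduce (2) to the unit axiom; $m(id\otimes S)\Delta(\varphi)(g)=\varphi(gg^{-1})=\varphi(1)=\varepsilon(\varphi)$ and its mirror reduce (3) to the inverse axiom; and finally $S^2\varphi(g)=\varphi((g^{-1})^{-1})=\varphi(g)$.

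For $C^*(\Gamma)$, one verifies the identities directly on a generator $g\in\Gamma$: we have $(\Delta\otimes id)\Delta(g)=g\otimes g\otimes g=(id\otimes\Delta)\Delta(g)$; then $(id\otimes\varepsilon)\Delta(g)=g\cdot 1=g$, symmetrically on the other side; then $m(id\otimes S)\Delta(g)=gg^{-1}=1=\varepsilon(g)\cdot 1$, again symmetrically; and $S^2(g)=(g^{-1})^{-1}=g$. Because $\Delta,\varepsilon$ are $*$-homomorphisms, $S$ is antimultiplicative, and $\mathbb{C}[\Gamma]$ is dense in $C^*(\Gamma)$, each identity propagates from the generators to the whole algebra.

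There is no substantive obstacle here; the arguments are essentially formal. The only mildly technical point is the identification $C(G)\otimes C(G)\simeq C(G\times G)$ used throughout the compact case, for which one should recall that on commutative $C^*$-algebras the spatial tensor product realizes exactly the algebra of continuous functions on the product space. Beyond this, the verification is mechanical.
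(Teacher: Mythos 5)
Your proposal is correct and follows essentially the same route as the paper: the paper also reduces all four identities, via the Gelfand space/algebra correspondence $C(G)\otimes C(G)\simeq C(G\times G)$, to the group axioms $(gh)k=g(hk)$, $g1=1g=g$, $gg^{-1}=g^{-1}g=1$, $(g^{-1})^{-1}=g$, and dismisses the $C^*(\Gamma)$ case as a direct check on the generators $g\in\Gamma$. You simply spell out the pointwise computations that the paper leaves implicit.
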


\begin{proof}
For the algebras of type $C(G)$, all the above formulae are well-known, coming via the Gelfand space/algebra correspondence from the following identities:
$$(gh)k=g(hk)\quad,\quad g1=1g=g\quad,\quad gg^{-1}=g^{-1}g=1\quad,\quad(g^{-1})^{-1}=g$$

As for the algebras of type $C^*(\Gamma)$, the formulae in the statement are all trivial.
\end{proof}

The above results suggest looking for a joint axiomatization of the algebras of type $C(G)$ and $C^*(\Gamma)$. However, this is quite tricky, due to some analytic issues with (3). Instead of getting into this, we will assume that $G$ is a compact Lie group, and that $\Gamma$ is a finitely generated group. These assumptions are related, due to:

\begin{proposition}
Given a compact abelian group $G$, and a discrete abelian group $\Gamma$, related by Pontrjagin duality, $G=\widehat{\Gamma}$ and $\Gamma=\widehat{G}$, the following are equivalent:
\begin{enumerate}
\item $G$ is a compact Lie group, $G\subset U_N$.

\item $\Gamma$ is finitely generated, $\Gamma=<g_1,\ldots,g_N>$.
\end{enumerate}
\end{proposition}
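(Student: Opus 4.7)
The plan is to use Pontrjagin duality as a bridge, combined with the standard facts $\widehat{\mathbb{T}^N}=\mathbb{Z}^N$ and $\widehat{\mathbb{Z}^N}=\mathbb{T}^N$, together with the duality principle that closed subgroups correspond to quotients and surjections correspond to injections. With these tools in hand, both implications become essentially one-line arguments once the correct ambient object is identified.

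For the direction $(2)\implies(1)$, I would start from a generating set $\Gamma=\langle g_1,\ldots,g_N\rangle$, which amounts to a surjection $\mathbb{Z}^N\twoheadrightarrow\Gamma$. Dualizing gives a closed embedding $G=\widehat{\Gamma}\hookrightarrow\widehat{\mathbb{Z}^N}=\mathbb{T}^N$, and viewing $\mathbb{T}^N$ as diagonal unitaries inside $U_N$ produces the desired inclusion $G\subset U_N$. Concretely, the embedding is the evaluation map $\chi\mapsto\mathrm{diag}(\chi(g_1),\ldots,\chi(g_N))$, which is a continuous injective group homomorphism from a compact space to a Hausdorff space, hence a topological embedding.

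For the direction $(1)\implies(2)$, I would start from $G\subset U_N$, a compact abelian subgroup. Since the elements of $G$ are pairwise commuting unitaries and $G$ is compact, one can simultaneously diagonalize them, obtaining (after conjugation) an inclusion $G\subset\mathbb{T}^N$ as diagonal unitaries. Dualizing this closed inclusion yields a surjection $\mathbb{Z}^N=\widehat{\mathbb{T}^N}\twoheadrightarrow\widehat{G}=\Gamma$, which exhibits $\Gamma$ as a finitely generated abelian group, with generators coming from the images of the standard basis of $\mathbb{Z}^N$.

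The main potential obstacle is the simultaneous diagonalization step in $(1)\implies(2)$: one needs that an arbitrary compact set of commuting unitaries in $U_N$ can be simultaneously diagonalized. This reduces, by the spectral theorem applied element by element, to showing that commuting normal operators on $\mathbb{C}^N$ share a common eigenbasis, which follows by induction on $N$ using joint invariance of eigenspaces. Everything else is formal bookkeeping with Pontrjagin duality, and the equivalence is then immediate.
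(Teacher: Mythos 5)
Your proof is correct and follows essentially the same route as the paper: the paper's appeal to Peter--Weyl (decomposing $\pi:G\subset U_N$ into a sum of characters $g_1\oplus\ldots\oplus g_N$) is exactly your simultaneous diagonalization step, and its converse (the direct sum of generators being a faithful representation) is exactly your dualization of the surjection $\mathbb{Z}^N\twoheadrightarrow\Gamma$ followed by $\mathbb{T}^N\subset U_N$. You merely make explicit the Pontrjagin bookkeeping (surjections dualize to closed embeddings and vice versa) that the paper leaves implicit.
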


\begin{proof}
Assuming that we have a representation $\pi:G\subset U_N$, we know from Peter-Weyl theory that $\pi$ decomposes as a sum of characters, $\pi=g_1\oplus\ldots\oplus g_N$, and we obtain in this way generators for $\Gamma=\widehat{G}$. Conversely, assuming $\Gamma=<g_1,\ldots,g_N>$, the direct sum $\pi=g_1\oplus\ldots\oplus g_N$ is a faithful representation $\pi:G\subset U_N$ of the dual $G=\widehat{\Gamma}$.
\end{proof}

With these observations in hand, we can now go ahead, and formulate:

\begin{definition}
A Woronowicz algebra is a $C^*$-algebra $A$, given with a unitary matrix $u\in M_N(A)$ whose coefficients generate $A$, such that:
\begin{enumerate}
\item $\Delta(u_{ij})=\sum_ku_{ik}\otimes u_{kj}$ defines a morphism of $C^*$-algebras $A\to A\otimes A$.

\item $\varepsilon(u_{ij})=\delta_{ij}$ defines a morphism of $C^*$-algebras $A\to\mathbb C$.

\item $S(u_{ij})=u_{ji}^*$ defines a morphism of $C^*$-algebras $A\to A^{opp}$.
\end{enumerate}
In this case, we write $A=C(G)=C^*(\Gamma)$ and call $G$ a compact matrix quantum group, and $\Gamma$ a finitely generated discrete quantum group. Also, we write $G=\widehat{\Gamma}$, $\Gamma=\widehat{G}$.
\end{definition}

As a basic example, we have the algebra $A=C(G)$, with $G\subset U_N$, with the matrix formed by the standard coordinates $u_{ij}(g)=g_{ij}$. Indeed, $\Delta,\varepsilon,S$ are given by the formulae in the statement, due to the following formulae, valid for the unitary matrices:
$$(UV)_{ij}=\sum_kU_{ik}V_{kj}\quad,\quad 1_{ij}=\delta_{ij}\quad,\quad (U^{-1})_{ij}=\bar{U}_{ji}$$

The other basic example is the algebra $A=C^*(\Gamma)$, with $\Gamma=<g_1,\ldots,g_N>$, together with the matrix $u=diag(g_1,\ldots,g_N)$. Indeed, the comultiplication, counit and antipode of this algebra are by definition given by the formulae in the statement.

As a first general result now, regarding the above objects, we have:

\begin{proposition}
For a Woronowicz algebra, the maps $\Delta,\varepsilon,S$ satisfy the usual conditions for a comultiplication, counit and antipode, as in Proposition 2.2 above, at least on the dense $*$-subagebra $\mathcal A\subset A$ generated by the coordinates $u_{ij}$. 
\end{proposition}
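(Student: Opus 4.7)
The plan is to verify each of the three identities on the generating matrix coefficients $u_{ij}$, using the explicit formulas from Definition 2.4, and then to propagate them to all of $\mathcal{A}$ by exploiting the multiplicative and antimultiplicative properties of $\Delta,\varepsilon,S$.

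For the generator-level verification, substitution is routine. Coassociativity on $u_{ij}$ gives $(\Delta\otimes id)\Delta(u_{ij})=\sum_{k,l}u_{il}\otimes u_{lk}\otimes u_{kj}$ and $(id\otimes\Delta)\Delta(u_{ij})=\sum_{k,l}u_{ik}\otimes u_{kl}\otimes u_{lj}$, which coincide after relabeling the summation indices. The counit axiom reduces to $\sum_k u_{ik}\delta_{kj}=u_{ij}$ and its mirror. For the antipode axiom I invoke the unitarity of $u$: $\sum_k u_{ik}S(u_{kj})=\sum_k u_{ik}u_{jk}^*=(uu^*)_{ij}=\delta_{ij}=\varepsilon(u_{ij})\cdot 1$, and the symmetric identity comes from $\sum_k S(u_{ik})u_{kj}=(u^*u)_{ij}=\delta_{ij}$.

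The passage from generators to all of $\mathcal{A}$ is automatic for (1) and (2): both sides of each identity are $*$-algebra morphisms, hence agree on $\mathcal{A}$ as soon as they agree on a $*$-generating set. The antipode axiom (3) is more delicate, since $m(id\otimes S)\Delta$ is neither multiplicative nor antimultiplicative. I would instead show that the subset $Y\subset\mathcal{A}$ on which both antipode identities of (3) hold is a $*$-subalgebra. For closure under products, using Sweedler notation $\Delta(a)=\sum a_{(1)}\otimes a_{(2)}$, the computation
$$m(id\otimes S)\Delta(ab)=\sum a_{(1)}b_{(1)}S(b_{(2)})S(a_{(2)})=\sum a_{(1)}\varepsilon(b)S(a_{(2)})=\varepsilon(ab)\cdot 1$$
collapses the inner $b$-sum and then the outer $a$-sum by the hypothesis on $b$ and $a$ respectively, with the scalar $\varepsilon(b)$ factoring out through the middle. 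For closure under the involution, the fact that $\Delta,\varepsilon,S$ are all $*$-preserving (the last since $S\colon A\to A^{\mathrm{opp}}$ is a morphism of $C^*$-algebras) shows that taking adjoints simply swaps the two identities of (3); this is why it is crucial to establish \emph{both} orders $m(id\otimes S)\Delta$ and $m(S\otimes id)\Delta$ on the generators.

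The only real obstacle is the extension of (3), for the reason just mentioned: one cannot cite a generic ``morphisms agreeing on generators'' principle, and must instead handle the Sweedler bookkeeping and the compatibility with $*$ by hand. Everything else, including coassociativity and the counit law, follows transparently from the defining formulas of Definition 2.4 and the unitarity of $u$.
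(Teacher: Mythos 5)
Your overall strategy is the same as the paper's: check the three identities on the generators, then extend to $\mathcal A$ by linearity and multiplicativity. The paper disposes of the extension in one sentence, and you are right to flag that the antipode axiom does not follow from a generic ``morphisms agreeing on generators'' principle; your Sweedler computation showing that the set $Y$ on which both antipode identities hold is closed under products is the correct way to make this precise, and is more careful than what the paper writes.

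However, your argument for closure of $Y$ under the involution does not work as stated. Applying $*$ to $\sum a_{(1)}S(a_{(2)})=\varepsilon(a)1$ and using that $\Delta$ and $S$ preserve $*$ yields $\sum S(a_{(2)}^*)\,a_{(1)}^*=\varepsilon(a^*)1$, which is $m(S\otimes id)\sigma\Delta(a^*)=\varepsilon(a^*)1$ with $\sigma$ the tensor flip --- the \emph{co-opposite} version of the antipode axiom, not $m(S\otimes id)\Delta(a^*)=\varepsilon(a^*)1$. The two coincide only when $\Delta$ is cocommutative, so having both orders of (3) on the generators does not, via adjoints, give $*$-stability of $Y$. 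The repair is easy and makes the $*$-closure step unnecessary: since $\mathcal A$ is generated as a plain algebra by the $u_{ij}$ together with the $u_{ij}^*$, it suffices to verify the two antipode identities directly on $u_{ij}^*$ as well, e.g. $\sum_k u_{ik}^*S(u_{kj}^*)=\sum_k u_{ik}^*u_{jk}=(\bar u\,\bar u^{\,*})_{ij}=\delta_{ij}$, using the unitarity of $\bar u$ (which holds for any Woronowicz algebra, as noted in Proposition 2.7). With the generating set $\{u_{ij},u_{ij}^*\}$ entirely inside $Y$, closure under sums and products finishes the proof.
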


\begin{proof}
This is clear, because all the three formulae in Proposition 2.2, as well as the supplementary formula $S^2=id$, are trivially satisfied on the generators $u_{ij}$. By linearity and multiplicativity, all these formulae are therefore satisfied on $\mathcal A$.
\end{proof}

At the level of the new examples now, we first have:

\begin{proposition}
The following universal algebras are Woronowicz algebras,
\begin{eqnarray*}
C(O_N^+)&=&C^*\left((u_{ij})_{i,j=1,\ldots,N}\Big|u=\bar{u},u^t=u^{-1}\right)\\
C(U_N^+)&=&C^*\left((u_{ij})_{i,j=1,\ldots,N}\Big|u^*=u^{-1},u^t=\bar{u}^{-1}\right)
\end{eqnarray*}
and so the underlying noncommutative spaces $O_N^+,U_N^+$ are compact quantum groups.
\end{proposition}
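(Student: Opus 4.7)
I would proceed in three stages: (i) show the two universal $C^*$-algebras are well-defined, (ii) construct $\Delta$ and $\varepsilon$ via straightforward applications of the universal property, and (iii) construct $S$ by a more delicate argument in the opposite algebra.

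For (i), the relation $uu^* = 1$ holds in both definitions: it is explicit in the unitary case, and in the orthogonal case it follows from $u = \bar u$ (so $u^* = u^t$) together with $u^t = u^{-1}$. This yields $\sum_k u_{ik} u_{ik}^* = 1$, and hence $\|u_{ij}\| \leq 1$ in any compatible $C^*$-seminorm. The universal seminorm is therefore finite on every polynomial in the generators, so the enveloping $C^*$-algebra exists in each case.

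For (ii), the strategy is uniform: exhibit a matrix $V \in M_N(B)$ (with $B$ the target) whose entries satisfy all defining relations of $C(U_N^+)$ or $C(O_N^+)$, and invoke the universal property to obtain the morphism $u_{ij} \mapsto V_{ij}$. For $\Delta$ I would take $V$ to be the product $(u \otimes 1)(1 \otimes u)$ in $M_N(A \otimes A)$; its entries are exactly $\sum_k u_{ik} \otimes u_{kj}$, and since the two factors commute entry-wise and each inherits the defining relations from $u$, so does their product. For $\varepsilon$ the matrix $V = 1_N$ trivially satisfies every relation.

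For (iii), in $A^{opp}$ I would take $V_{ij} = u_{ji}^*$ and verify the defining relations with the reversed multiplication. The prototype computation is: the $(i,j)$ entry of $V V^*$ computed in $A^{opp}$ is $\sum_k V_{jk}^* V_{ik} = \sum_k u_{kj} u_{ki}^*$, which equals $\delta_{ij}$ by the relation $\bar u u^t = 1$. The remaining identities — $V^* V = 1$ and, in the $U_N^+$ case, $V^t \bar V = \bar V V^t = 1$ — follow by analogous calculations, each appealing to one of the four bilinear relations packaged by $u u^* = u^* u = u^t \bar u = \bar u u^t = 1$. The main obstacle I foresee is the index bookkeeping for $S$: reversing multiplication in $A^{opp}$ permutes the roles of these four unitarity identities, and it is precisely the redundancy built into the definition of $C(U_N^+)$ — the apparently superfluous second relation $u^t = \bar u^{-1}$ — that makes the antipode definable at all. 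Once $\Delta, \varepsilon, S$ are constructed on generators in this way, the Woronowicz axioms (1)--(3) hold by definition, completing the proof.
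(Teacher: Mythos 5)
Your proposal is correct and follows essentially the same route as the paper, which simply observes that the matrices $u^\Delta_{ij}=\sum_ku_{ik}\otimes u_{kj}$, $u^\varepsilon_{ij}=\delta_{ij}$, $u^S_{ij}=u_{ji}^*$ remain orthogonal/biunitary and invokes the universal property. You merely supply more detail (existence of the enveloping $C^*$-algebra, the explicit check in $A^{opp}$), and your remark that the second relation $u^t=\bar{u}^{-1}$ is exactly what makes the antipode definable is accurate.
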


\begin{proof}
This follows from the fact that if a matrix $u$ is orthogonal/biunitary, then so are the matrices $u^\Delta_{ij}=\sum_ku_{ik}\otimes u_{kj}$, $u^\varepsilon_{ij}=\delta_{ij}$, $u^S_{ij}=u_{ji}^*$. Thus, we can define maps $\Delta,\varepsilon,S$ as in Definition 2.4, by using the universal properties of $C(O_N^+)$, $C(U_N^+)$. See \cite{wa1}.
\end{proof}

The basic properties of $O_N^+,U_N^+$ can be summarized as follows:

\begin{proposition}
The quantum groups $O_N^+,U_N^+$ have the following properties:
\begin{enumerate}
\item The closed subgroups $G\subset U_N^+$ are exactly the $N\times N$ compact quantum groups. As for the closed subgroups $G\subset O_N^+$, these are those satisfying $u=\bar{u}$.

\item We have ``liberation'' embeddings $O_N\subset O_N^+$ and $U_N\subset U_N^+$, obtained by dividing the algebras $C(O_N^+),C(U_N^+)$ by their respective commutator ideals.

\item We have as well embeddings $\widehat{L}_N\subset O_N^+$ and $\widehat{F}_N\subset U_N^+$, where $L_N$ is the free product of $N$ copies of $\mathbb Z_2$, and where $F_N$ is the free group on $N$ generators.
\end{enumerate}
\end{proposition}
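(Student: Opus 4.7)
The strategy is to handle the three parts in order, all three being ultimately applications of the universal property built into the definitions of $C(U_N^+)$ and $C(O_N^+)$.

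For part (1), given any Woronowicz algebra $A$ generated by the entries of an $N\times N$ biunitary matrix $v$, the universality of $C(U_N^+)$ as the $C^*$-algebra generated by a biunitary matrix produces a $*$-homomorphism $\pi\colon C(U_N^+)\to A$ sending $u_{ij}\mapsto v_{ij}$. This $\pi$ is automatically surjective (the $v_{ij}$ generate $A$) and automatically intertwines $\Delta,\varepsilon,S$, because both sides agree on the generators by Definition 2.4; the general formulas then follow by multiplicativity. So $A=C(G)$ for a closed subgroup $G\subset U_N^+$, and conversely every such quotient is a Woronowicz algebra by transport of structure. The orthogonal case is obtained by restricting attention to quotients that factor through the extra relations $u=\bar u$.

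For part (2), the commutative quotient $C(U_N^+)_{ab}$ is, tautologically, the universal commutative unital $C^*$-algebra generated by the entries of a biunitary matrix. By Gelfand's theorem (Theorem 1.6), this algebra is of the form $C(Y)$ for $Y=\mathrm{Spec}(C(U_N^+)_{ab})$, and the characters $\chi$ are in bijection with scalar $N\times N$ matrices $M$ via $M_{ij}=\chi(u_{ij})$, constrained exactly by biunitarity, i.e.\ by $M\in U_N$. Hence $Y=U_N$, and the quotient map corresponds dually to $U_N\hookrightarrow U_N^+$. The same argument, with biunitarity replaced by orthogonality, gives $O_N\subset O_N^+$. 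The one thing to check is that the quotient map respects $\Delta,\varepsilon,S$, which is again immediate on generators.

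For part (3), take the diagonal matrix $u=\mathrm{diag}(g_1,\ldots,g_N)\in M_N(C^*(F_N))$. Since each $g_i$ is unitary in $C^*(F_N)$, the matrix $u$ is unitary, and $u^t=u$ is inverted by $\bar u=\mathrm{diag}(g_1^{-1},\ldots,g_N^{-1})$, so $u$ is biunitary. The universal property of $C(U_N^+)$ gives a $*$-homomorphism $C(U_N^+)\to C^*(F_N)$, surjective because its image contains all the generators $g_i=u_{ii}$. Hopf-compatibility again reduces to the identity $\sum_k u_{ik}\otimes u_{kj}=\delta_{ij}\,g_i\otimes g_i$, which matches the coproduct $\Delta(g_i)=g_i\otimes g_i$ of $C^*(F_N)$ on the nose. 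Exactly the same construction works for $L_N=\mathbb{Z}_2^{*N}$, noting that the self-adjointness $g_i^2=1$ forces $u=\bar u$, so the map factors through $C(O_N^+)$.

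The only real subtlety is being consistent about conventions: ``closed subgroup'' has to be interpreted as a surjective Woronowicz-morphism at the algebra level, and once that is understood, every step is a routine verification on the generators $u_{ij}$. The main thing to be careful about is that the Hopf structure is preserved in each construction, but this never requires more than evaluating $\Delta,\varepsilon,S$ on generators, which is the real force of Definition 2.4.
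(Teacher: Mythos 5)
Your treatment of parts (2) and (3) is correct and follows essentially the same route as the paper: for (2) you flesh out the Gelfand-theorem argument, identifying the characters of the abelianization with unitary scalar matrices, and for (3) you use the universal property with $u=\mathrm{diag}(g_1,\ldots,g_N)$ together with the observation that $g=\bar g$ in a group algebra forces $g^2=1$, which is exactly the remark the paper makes to separate the $L_N$ case from the $F_N$ case.

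There is, however, a genuine gap in part (1). You begin with ``any Woronowicz algebra $A$ generated by the entries of an $N\times N$ \emph{biunitary} matrix $v$'', but Definition 2.4 only requires $u\in M_N(A)$ to be unitary; the unitarity of $\bar u$ (equivalently, the invertibility of $u^t$) is not part of the hypothesis. Since the defining relations of $C(U_N^+)$ are $u^*=u^{-1}$ \emph{and} $u^t=\bar u^{-1}$, the universal property cannot be invoked until one knows that $\bar v$ is unitary. This is precisely the one nontrivial point in part (1), and it is the remark that the paper's proof consists of: because $S(u_{ij})=u_{ji}^*$ is required to define a morphism of $C^*$-algebras $A\to A^{opp}$, applying $S$ to the unitarity relations for $u$ produces the unitarity relations for $\bar u$. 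By building biunitarity into your hypothesis you have assumed exactly the step that needs to be proved; as written, your argument shows only that Woronowicz algebras whose fundamental matrix happens to be biunitary arise as closed subgroups of $U_N^+$, not that all of them do. Once this point is restored, the rest of your part (1), including the characterization of the closed subgroups of $O_N^+$ by the extra relation $u=\bar u$, goes through as you describe.
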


\begin{proof}
Here (1) is clear from definitions, with the remark that, in the context of Definition 2.4 above, the formula $S(u_{ij})=u_{ji}^*$ shows that $\bar{u}$ must be unitary too. The assertion (2) follows from the Gelfand theorem. Finally, (3) follows from definitions, with the remark that in a group algebra we have $\bar{g}=g^{-1}$, and so $g=\bar{g}$ if and only if $g^2=1$.
\end{proof}

In order to construct more examples, we can look for intermediate objects for the inclusion $U_N\subset U_N^+$. There are several possible choices here, and the ``simplest'' ones, from a point of view that will be explained in detail later on, are as follows:

\begin{proposition}
We have intermediate quantum groups as follows:
\begin{enumerate}
\item $O_N\subset O_N^*\subset O_N^+$, obtained from $O_N^+$ by imposing to the variables $u_{ij}$ the half-commutation relations $abc=cba$. 

\item $U_N\subset U_N^*\subset U_N^+$, obtained from $U_N^+$ by imposing to the variables $u_{ij},u_{ij}^*$ the half-commutation relations $abc=cba$. 
\end{enumerate}
\end{proposition}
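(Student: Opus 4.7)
The plan is to define $C(O_N^*)$ and $C(U_N^*)$ as the quotients of $C(O_N^+)$ and $C(U_N^+)$ by the $C^*$-ideals generated by the half-commutation relations $abc - cba$, where $a,b,c$ range over $\{u_{ij}\}$ in the orthogonal case and over $\{u_{ij}, u_{ij}^*\}$ in the unitary case. Then I must verify two things: that these quotients are again Woronowicz algebras, and that they sit between the classical group and its liberation as claimed.

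For the Woronowicz structure, I would show that $\Delta,\varepsilon,S$ from $O_N^+, U_N^+$ descend to these quotients, which amounts to checking that each of the three maps sends the half-commutation relations into the corresponding half-commutation relations on the target algebra. For $\varepsilon$ this is trivial since $u^\varepsilon_{ij}=\delta_{ij}$ is scalar. For $\Delta$, the key computation is
$$u^\Delta_{ij}\,u^\Delta_{kl}\,u^\Delta_{mn}=\sum_{a,b,c}u_{ia}u_{kb}u_{mc}\otimes u_{aj}u_{bl}u_{cn},$$
and I apply the half-commutation $xyz=zyx$ once in the left tensor leg and once in the right tensor leg to convert this into $\sum u_{mc}u_{kb}u_{ia}\otimes u_{cn}u_{bl}u_{aj}=u^\Delta_{mn}u^\Delta_{kl}u^\Delta_{ij}$. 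For the unitary case one performs the same computation allowing entries to be $u_{ij}$ or $u_{ij}^*$. For $S$, since $S$ is an antimorphism, $S(abc-cba)=S(c)S(b)S(a)-S(a)S(b)S(c)$, which is again a half-commutation relation, provided the set of generators is closed under $S$; this holds because $S(u_{ij})=u_{ji}^*$ (and also $S(u_{ij}^*)=u_{ji}$ using that $S$ is a $*$-map), so in the orthogonal case where $u_{ij}=u_{ij}^*$ the generators are preserved, and in the unitary case the set $\{u_{ij},u_{ij}^*\}$ is preserved.

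For the chain of inclusions, the embedding $O_N^*\subset O_N^+$ (resp.\ $U_N^*\subset U_N^+$) follows by construction as a quotient, which reverses to an inclusion at the quantum group level. For $O_N\subset O_N^*$ and $U_N\subset U_N^*$, I observe that full commutativity $ab=ba$ trivially implies the half-commutation $abc=cba$, so $C(O_N^*)$ surjects onto $C(O_N)$ and similarly in the unitary case, yielding the desired embeddings.

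I do not expect a real obstacle here; the content is routine once the right formulation is in place. The only point that requires a moment of care is the $\Delta$ computation, where one must notice that a single application of $abc=cba$ moves the relation through both tensor legs simultaneously only if applied separately in each leg, and the $S$ check in the unitary case, where one must verify that the $*$-compatibility of $S$ makes the set of generators-and-their-adjoints stable, so that the half-commutation relations are genuinely mapped to half-commutation relations rather than to something outside the defining relations.
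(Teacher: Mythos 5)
Your proposal is correct and follows the same route as the paper, which simply notes that if the entries of $u$ half-commute then so do those of $u^\Delta_{ij}=\sum_ku_{ik}\otimes u_{kj}$, $u^\varepsilon_{ij}=\delta_{ij}$, $u^S_{ij}=u_{ji}^*$, and defers the details to the references. You have merely written out those checks (the two-leg $\Delta$ computation, the $S$-stability of the generating set, and the factorization through commutativity) explicitly, which is fine.
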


\begin{proof}
This is elementary, by using the fact that if the entries of $u=(u_{ij})$ half-commute, then so do the entries of $u^\Delta_{ij}=\sum_ku_{ik}\otimes u_{kj}$, $u^\varepsilon_{ij}=\delta_{ij}$, $u^S_{ij}=u_{ji}^*$. See \cite{bve}, \cite{bdu}.
\end{proof}

In order to distinguish the various quantum groups that we have, we can use:

\begin{proposition}
Given a closed subgroup $G\subset U_N^+$, consider its ``diagonal torus'', which is the closed subgroup $T\subset G$ constructed as follows:
$$C(T)=C(G)\Big/\left<u_{ij}=0\Big|\forall i\neq j\right>$$
This torus is then a group dual, $T=\widehat{\Lambda}$, where $\Lambda=<g_1,\ldots,g_N>$ is the discrete group generated by the elements $g_i=u_{ii}$, which are unitaries inside $C(T)$.
\end{proposition}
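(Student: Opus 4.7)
First I would unpack the quotient: in $C(T)$, every $u_{ij}$ with $i \ne j$ vanishes, so $C(T)$ is generated by the images $g_i := [u_{ii}]$ of the diagonal entries. The unitarity relations $uu^* = u^*u = 1$ in $C(G)$ read, on the $(i,i)$ entry, $\sum_k u_{ik}u_{ik}^* = \sum_k u_{ki}^* u_{ki} = 1$; after projecting to $C(T)$, only the $k=i$ summand survives in each, yielding $g_i g_i^* = g_i^* g_i = 1$. Thus the $g_i$ are unitaries in $C(T)$.

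Next I would check that the Woronowicz structure descends. Writing $I \subset C(G)$ for the closed two-sided $*$-ideal generated by $\{u_{ij} : i \ne j\}$, I would verify the three inclusions $\Delta(I) \subset I \otimes C(G) + C(G) \otimes I$, $\varepsilon(I) = 0$, $S(I) \subset I$. The counit and antipode inclusions are immediate from $\varepsilon(u_{ij}) = \delta_{ij}$ and $S(u_{ij}) = u_{ji}^*$. For the coproduct, $\Delta(u_{ij}) = \sum_k u_{ik} \otimes u_{kj}$ with $i \ne j$: for each $k$, at least one of $i = k$ or $k = j$ must fail (else $i = j$), so every term lies in $I \otimes C(G) + C(G) \otimes I$. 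Hence $\Delta, \varepsilon, S$ induce maps $\Delta_T, \varepsilon_T, S_T$ on $C(T)$. On the diagonal generators, the sole surviving term in $\Delta(u_{ii}) = \sum_k u_{ik} \otimes u_{ki}$ is $k = i$, giving $\Delta_T(g_i) = g_i \otimes g_i$; similarly $\varepsilon_T(g_i) = 1$ and $S_T(g_i) = g_i^*$. So $C(T)$, equipped with the diagonal fundamental matrix $\mathrm{diag}(g_1, \ldots, g_N)$, is a Woronowicz algebra with group-like unitary generators.

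Now let $\Lambda$ be the subgroup of the unitary group of $C(T)$ generated by $g_1, \ldots, g_N$. Since $\Delta_T$ is a $*$-homomorphism and the $g_i$ are group-like, every word $w \in \Lambda$ inherits $\Delta_T(w) = w \otimes w$. The tautological representation $\Lambda \hookrightarrow U(C(T))$ extends, by the universal property of the full group $C^*$-algebra, to a surjective Hopf $*$-homomorphism $\pi : C^*(\Lambda) \twoheadrightarrow C(T)$ matching the Woronowicz structure prescribed on $C^*(\Lambda)$ in Definition 2.4. The main obstacle I expect is injectivity of $\pi$, which amounts to showing that the $C^*$-norm on the group $*$-algebra $\mathbb C[\Lambda]$ inherited from $C(T)$ equals the maximal one. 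This is the standard fact that a Woronowicz algebra generated by group-like unitaries is the full group $C^*$-algebra of the abstract group they form: the counit $\varepsilon_T$ plays the role of the trivial representation and, combined with the coproduct $\Delta_T$, provides the Hopf-algebraic rigidity forcing the two $C^*$-norms to agree. This identification gives $C(T) = C^*(\Lambda)$, i.e.\ $T = \widehat\Lambda$.
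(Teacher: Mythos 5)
Your first two paragraphs are exactly the paper's argument, just written out in full: the paper's proof consists of the two observations that the $g_i=u_{ii}$ are unitaries (unitarity of $u$ plus killing the off-diagonal terms) and that they are group-like (only the $k=i$ term of $\Delta(u_{ii})=\sum_ku_{ik}\otimes u_{ki}$ survives), and then concludes $C(T)=C^*(\Lambda)$. Your verification that the ideal $I$ is a Hopf $*$-ideal is a welcome addition that the paper leaves implicit.

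The last paragraph, however, contains a claim that is both unnecessary and false as stated. It is not true that a Woronowicz algebra generated by group-like unitaries must be the \emph{full} group $C^*$-algebra of the group they generate, nor that the existence of a bounded counit forces the maximal $C^*$-norm: for a non-amenable $\Gamma$ there are exotic completions of $\mathbb C[\Gamma]$ strictly between $C^*(\Gamma)$ and $C^*_{red}(\Gamma)$ which still carry a bounded counit (any completion with respect to a family of representations containing the trivial one does), so the counit provides no such rigidity. Fortunately the statement does not require this: in the paper's conventions (Definition 2.4 together with the full/reduced discussion in Theorem 3.7), ``$T=\widehat\Lambda$'' means that $C(T)$ is a Woronowicz algebra whose dense Hopf $*$-subalgebra is the group algebra $\mathbb C[\Lambda]$, with $\Lambda$ defined concretely as the group of unitaries generated by the $g_i$ inside $C(T)$ --- the choice of $C^*$-completion is not part of the claim. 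The one small point worth adding instead is that distinct group-like elements in a Hopf $*$-algebra are automatically linearly independent, so that $\mathrm{span}(\Lambda)$ really is a copy of $\mathbb C[\Lambda]$; with that remark your first two paragraphs already constitute a complete proof.
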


\begin{proof}
Since $u$ is unitary, its diagonal entries $g_i=u_{ii}$ are unitaries inside $C(T)$. Moreover, from $\Delta(u_{ij})=\sum_ku_{ik}\otimes u_{kj}$ we obtain $\Delta(g_i)=g_i\otimes g_i$, and so these unitaries $g_i\in C(T)$ are group-like. We conclude that we have $C(T)=C^*(\Lambda)$, and so $T=\widehat{\Lambda}$.
\end{proof}

Now back to our basic examples of quantum groups, we can formulate:

\begin{theorem}
The basic examples of compact quantum groups are as follows,
$$\xymatrix@R=15mm@C=15mm{
U_N\ar[r]&U_N^*\ar[r]&U_N^+\\
O_N\ar[r]\ar[u]&O_N^*\ar[r]\ar[u]&O_N^+\ar[u]}$$
and these quantum groups are non-isomorphic, distinguished by their diagonal tori.
\end{theorem}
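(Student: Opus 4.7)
The plan is to prove two things: that the diagram of arrows exists, and that the six quantum groups are pairwise non-isomorphic, as witnessed by their diagonal tori via Proposition 2.9.

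For the diagram, each of the six algebras is a Woronowicz algebra by Propositions 2.6 and 2.8 (or, for the commutative columns, directly from the Lie-group structure). The horizontal arrows come from universal quotients: imposing full commutation on the generators of $C(O_N^+)$ or $C(U_N^+)$ gives the left column, while imposing half-commutation $abc=cba$ gives the middle column. The vertical arrows $O_N^\bullet\hookrightarrow U_N^\bullet$ come from adding the reality relation $u=\bar u$. In each case the quotient map automatically intertwines $\Delta,\varepsilon,S$, since these are defined on generators by the same formulae on both sides.

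For the non-isomorphism claim, I apply Proposition 2.9 to compute the diagonal torus $T=\widehat\Lambda$ of each $G$. The discrete group $\Lambda=\langle g_1,\ldots,g_N\rangle$ with $g_i=u_{ii}$ inherits precisely those defining relations of $u$ that survive passage to the quotient by the off-diagonal entries. This gives $\Lambda=\mathbb Z_2^N$ for $O_N$, $\Lambda=L_N=\mathbb Z_2^{*N}$ for $O_N^+$, $\Lambda=\mathbb Z^N$ for $U_N$, and $\Lambda=F_N$ for $U_N^+$. For the half-liberated versions one obtains the half-commutative analogues $\Lambda(O_N^*)=L_N/\langle g_ig_jg_k=g_kg_jg_i\rangle$ and $\Lambda(U_N^*)=F_N/\langle g_ig_jg_k=g_kg_jg_i\rangle$. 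The main technical point, and the likely obstacle, is to check that no further relations are imposed when passing from $C(G)$ to $C(T)$; equivalently, that the canonical surjection $C^*(\Lambda)\to C(T)$ is an isomorphism. For $O_N,U_N$ this is classical. For $O_N^+,U_N^+$ the inverse map comes from the universal property: there is a well-defined $*$-homomorphism $C(G)\to C^*(\Lambda)$ sending $u_{ii}\mapsto g_i$ and $u_{ij}\mapsto 0$ for $i\neq j$, since $diag(g_1,\ldots,g_N)$ is orthogonal (respectively biunitary) as required; this factors through $C(T)$ and inverts the surjection. The half-liberated cases go through identically, since any diagonal matrix trivially satisfies half-commutation.

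Finally, I distinguish the six groups $\Lambda$ as abstract groups, for $N\geq 3$. The three orthogonal rows have exponent $2$, while the three unitary rows are torsion-free, which already separates the two rows. Within each row, the leftmost group is abelian, whereas the middle and right are not; and the middle is distinguished from the right by the relation $g_1g_2g_3=g_3g_2g_1$, which fails in $\mathbb Z_2^{*N}$ and in $F_N$. To see that the half-commutative quotients remain non-abelian (rather than collapsing further to the abelian column), one exhibits a non-commuting $*$-representation, such as the $2\times2$ anticommuting matrix model underlying the projective embedding of \cite{bdu}. This pairwise distinction of the tori then forces the six quantum groups themselves to be pairwise non-isomorphic.
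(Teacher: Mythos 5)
Your proposal is correct and follows essentially the same route as the paper: the diagram is assembled from the universal constructions already in hand, and the six quantum groups are separated by computing their diagonal tori, which are the duals of $\mathbb Z_2^N,\mathbb Z_2^{\circ N},\mathbb Z_2^{*N}$ and $\mathbb Z^N,\mathbb Z^{\circ N},\mathbb Z^{*N}$. You actually supply more detail than the paper does at the two points it leaves implicit -- the inverse map $C(G)\to C^*(\Lambda)$ showing that passing to the diagonal torus imposes no extra relations, and the antidiagonal $2\times2$ model showing the half-classical quotients remain non-abelian -- which only strengthens the argument.
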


\begin{proof}
The fact that we have quantum groups as above is something that we already know. Regarding now the diagonal tori, these are as follows, with $\circ$ being by the half-classical product, subject to relations $abc=cba$ between the standard generators:
$$\xymatrix@R=15mm@C=15mm{
\widehat{\mathbb Z^N}\ar[r]&\widehat{\mathbb Z^{\circ N}}\ar[r]&\widehat{\mathbb Z^{*N}}\\
\widehat{\mathbb Z_2^N}\ar[r]\ar[u]&\widehat{\mathbb Z_2^{\circ N}}\ar[r]\ar[u]&\widehat{\mathbb Z_2^{*N}}\ar[u]}$$

Now since the discrete groups in this diagram are clearly non-isomorphic, this shows that the corresponding quantum groups are non-isomorphic as well, as claimed.
\end{proof}

\section{Representation theory}

In order to reach to some more advanced insight into the structure of the closed subgroups $G\subset U_N^+$, we can use representation theory. Let us begin with:

\begin{definition}
Let $(A,u)$ be a Woronowicz algebra, and consider its dense $*$-subalgebra $\mathcal A\subset A$ of ``smooth elements'', generated by the standard coordinates $u_{ij}$.
\begin{enumerate}
\item A corepresentation of $A$ is a unitary matrix $r\in M_n(\mathcal A)$ satisfying $\Delta(r_{ij})=\sum_kr_{ik}\otimes r_{kj}$, $\varepsilon(r_{ij})=\delta_{ij}$ and $S(r_{ij})=r_{ji}^*$.

\item The corepresentations are subject to making sums, $r+p=diag(r,p)$, tensor products, $(r\otimes p)_{ia,jb}=r_{ij}p_{ab}$, and taking conjugates, $(\bar{r})_{ij}=r_{ij}^*$.

\item Given $r\in M_n(\mathcal A),p\in M_m(\mathcal A)$ we set $Hom(r,p)=\{T\in M_{m\times n}(\mathbb C)|Tr=pT\}$, and we use the notations $Fix(r)=Hom(1,r)$, and $End(r)=Hom(r,r)$.

\item Two corepresentations $r\in M_n(\mathcal A),p\in M_m(\mathcal A)$ are called equivalent, and we write $r\sim p$, when $n=m$, and $Hom(r,p)$ contains an invertible element.
\end{enumerate}
\end{definition}

For $A=C(G)$ we obtain in this way the representations of $G$, as a consequence of the Gelfand space/algebra correspondence. For $A=C^*(\Gamma)$, observe that any group element $g\in\Gamma$ is a one-dimensional corepresentation. We will see later on that, up to equivalence, each corepresentation of $C^*(\Gamma)$ splits as a direct sum of group elements.

We will need as well the following standard fact:

\begin{proposition}
The characters of corepresentations, given by $\chi_r=\sum_ir_{ii}$, satisfy:
$$\chi_{r+p}=\chi_r+\chi_p\quad,\quad\chi_{r\otimes p}=\chi_r\chi_p\quad,\quad\chi_{\bar{r}}=\chi_r^*$$
In addition, given two equivalent corepresentations, $r\sim p$, we have $\chi_r=\chi_p$.
\end{proposition}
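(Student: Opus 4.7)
The plan is to verify each of the four identities by direct computation from the definitions in Definition 3.1, since no deep input is needed beyond unpacking what sum, tensor product, conjugate and equivalence mean at the matrix level.

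First, for additivity, I would use that $r+p=diag(r,p)$: the diagonal of this block matrix is the concatenation of the diagonals of $r$ and $p$, so $\chi_{r+p}=\sum_ir_{ii}+\sum_ap_{aa}=\chi_r+\chi_p$. For the tensor product, I would plug in the formula $(r\otimes p)_{ia,jb}=r_{ij}p_{ab}$ on the diagonal $(ia)=(jb)$, obtaining
$$\chi_{r\otimes p}=\sum_{i,a}r_{ii}p_{aa}=\Bigl(\sum_ir_{ii}\Bigr)\Bigl(\sum_ap_{aa}\Bigr)=\chi_r\chi_p,$$
where the factorization is legitimate because each $r_{ii}$ commutes past each $p_{aa}$ only in the sense that we are summing separately; however, if one were worried about noncommutativity here, note that the equality already holds at the level of the single expression $\sum_{i,a}r_{ii}p_{aa}$, which is precisely $(\sum_ir_{ii})(\sum_ap_{aa})$ by distributivity. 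For the conjugate, I would apply $(\bar r)_{ij}=r_{ij}^*$ on the diagonal, giving $\chi_{\bar r}=\sum_ir_{ii}^*=(\sum_ir_{ii})^*=\chi_r^*$, using that the involution is antilinear.

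Finally, for invariance under equivalence, given $r\sim p$ there is an invertible scalar matrix $T\in M_n(\mathbb C)$ with $Tr=pT$, hence $p=TrT^{-1}$ in $M_n(\mathcal A)$. Taking the matrix trace of this identity yields
$$\chi_p=\sum_ip_{ii}=\sum_{i,j,k}T_{ij}r_{jk}(T^{-1})_{ki}=\sum_{j,k}\Bigl(\sum_i(T^{-1})_{ki}T_{ij}\Bigr)r_{jk}=\sum_jr_{jj}=\chi_r,$$
where cyclicity of the trace is valid because the entries of $T$ and $T^{-1}$ are scalars, which commute freely with the elements $r_{jk}\in\mathcal A$.

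The only point that requires any care is this last step, since $\mathcal A$ is noncommutative; but the proof goes through because the matrices being cycled past $r$ have \emph{complex} entries. Once this is spelled out, the whole statement reduces to bookkeeping with indices.
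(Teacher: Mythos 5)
Your proposal is correct and follows essentially the same route as the paper: the three algebraic identities are direct from the definitions, and the invariance under equivalence is the trace computation $\chi_p=Tr(TrT^{-1})=Tr(r)=\chi_r$, with cyclicity justified exactly as you do, by the scalarity of the entries of $T$. The paper merely states the first three identities as clear and writes the trace argument more briefly; your added care about distributivity and about why cyclicity survives noncommutativity is the right justification, not a different method.
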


\begin{proof}
The three formulae in the statement are all clear from definitions. Regarding now the last assertion, assuming that we have $r=T^{-1}pT$, we obtain:
$$\chi_r=Tr(r)=Tr(T^{-1}pT)=Tr(p)=\chi_p$$

We conclude that $r\sim p$ implies $\chi_r=\chi_p$, as claimed.
\end{proof}

In order to work out the analogue of the Peter-Weyl theory, we need to integrate over $G$. Things here are quite tricky, and best is to start with a definition, as follows:

\begin{definition}
The Haar integration of a Woronowicz algebra $A=C(G)$ is the unique positive unital tracial state $\int_G:A\to\mathbb C$ subject to the invariance conditions
$$\left(\int_G\otimes id\right)\Delta=\left(id\otimes\int_G\right)\Delta=\int_G(.)1$$
provided that such a state exists indeed, and is unique.
\end{definition}

As a basic example, given a compact Lie group $G\subset U_N$, the algebra $A=C(G)$ has indeed a Haar integration, which is the integration with respect to the Haar measure of $G$. Moreover, this latter measure can be obtained by starting with any probability measure on $G$, and then performing a Ces\`aro limit with respect to the convolution.

In analogy with this fact, we have the following result:

\begin{theorem}
Any Woronowicz algebra has a Haar integration, which can be constructed by starting with any faithful positive unital state $\varphi\in A^*$, and taking the Ces\`aro limit
$$\int_G=\lim_{n\to\infty}\frac{1}{n}\sum_{k=1}^n\varphi^{*k}$$
where the convolution operation for states is given by $\phi*\psi=(\phi\otimes\psi)\Delta$.
\end{theorem}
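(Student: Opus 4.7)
The plan is to reduce convergence of the Cesàro averages to a finite-dimensional mean-ergodic statement, applied corepresentation by corepresentation. First, coassociativity of $\Delta$ makes $\phi*\psi = (\phi\otimes\psi)\Delta$ associative, and positivity/unitality of $\Delta$ as a $*$-homomorphism ensure that $\varphi^{*k}$ and its Cesàro averages $\Phi_n := \frac{1}{n}\sum_{k=1}^n \varphi^{*k}$ are all states on $A$. Since the dense $*$-subalgebra $\mathcal A\subset A$ is linearly spanned by matrix entries $r_{ij}$ of finite-dimensional corepresentations $r\in M_n(\mathcal A)$ (built from $u$ via the operations of Definition 3.1), it suffices to prove pointwise convergence of $\Phi_n$ on every such $r_{ij}$; since $\|\Phi_n\|\leq 1$, the pointwise limit then extends by continuity to a state $\int_G$ on all of $A$.

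Fix a corepresentation $r$ and set $M := (\varphi(r_{ij}))_{ij}\in M_n(\mathbb{C})$. From $\Delta(r_{ij}) = \sum_k r_{ik}\otimes r_{kj}$ one reads off $(\phi*\psi)(r_{ij}) = \sum_k \phi(r_{ik})\psi(r_{kj})$, so by induction $\varphi^{*k}(r_{ij}) = (M^k)_{ij}$. The key estimate is $\|M\|\leq 1$: applying Cauchy--Schwarz for the state $\varphi$ to $b_i := \sum_j \xi_j r_{ij}$ and summing over $i$,
$$\|M\xi\|^2 = \sum_i |\varphi(b_i)|^2 \leq \sum_i \varphi(b_i b_i^*) = \varphi\Bigl(\sum_{j,k} \xi_j\bar{\xi}_k \sum_i r_{ij}r_{ik}^*\Bigr) = \|\xi\|^2,$$
where the last equality uses the identity $\sum_i r_{ij}r_{ik}^* = \delta_{jk}$, which one obtains by applying the $*$-antihomomorphism $S$ (with $S(r_{ij}) = r_{ji}^*$) to the row-unitarity relation $rr^* = 1$. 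The finite-dimensional mean ergodic theorem then gives $\frac{1}{n}\sum_{k=1}^n M^k \to P$, the orthogonal projection onto $\ker(M - I)$, so $\Phi_n(r_{ij})\to P_{ij}$ as required.

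For left invariance one must show $\eta*\int_G = \int_G$ for every state $\eta$, which, tested against every state of $A$, is equivalent to $(\mathrm{id}\otimes\int_G)\Delta(a) = \int_G(a)\cdot 1$. Working again on a fixed $r$: using faithfulness of $\varphi$, a direct calculation yields $\sum_i\varphi(a_ia_i^*) = 0$ for $a_i := \sum_j r_{ij}\xi_j - \xi_i\cdot 1$ whenever $\xi\in\ker(M - I)$, forcing $a_i = 0$ and hence $\ker(M - I) = Fix(r) := \{\xi : \sum_j r_{ij}\xi_j = \xi_i\cdot 1\}$. On $Fix(r)$ every $M_\eta$ acts as the identity, so $M_\eta P = P$, giving $\eta*\int_G = \int_G$; right invariance is symmetric. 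Uniqueness is the standard convolution argument: any Haar state $h$ satisfies $h*\int_G = \int_G$ by invariance of $\int_G$ and $h*\int_G = h$ by invariance of $h$, hence $h = \int_G$; traciality is a separate verification from $S^2 = \mathrm{id}$. The main obstacle is the contraction estimate $\|M\|\leq 1$: one must use the antipode axiom to convert $rr^* = 1$ into the columnwise form $\sum_i r_{ij}r_{ik}^* = \delta_{jk}$ needed to close the Cauchy--Schwarz step; once this is in place, everything downstream is finite-dimensional linear algebra and routine functional analysis.
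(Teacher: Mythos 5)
Your argument is correct in substance, but note that the paper does not actually prove this theorem: its proof disposes of the commutative and group dual cases by appeal to classical facts, and for the general case simply defers to Woronowicz \cite{wo1} and Maes--Van Daele \cite{mva}, remarking only that the assumption $S^2=id$ simplifies matters. What you have written is, in effect, a self-contained version of the argument those references contain: reduce to matrix coefficients of the unitary corepresentations $u^{\otimes k}$, observe that convolution acts on the coefficient matrix of $r$ by multiplication by $M=(\varphi(r_{ij}))$, prove $\|M\|\leq 1$ by Cauchy--Schwarz, apply the mean ergodic theorem, and identify $\ker(M-I)$ with $Fix(r)$ using faithfulness of $\varphi$. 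Your derivation of $\sum_i r_{ij}r_{ik}^*=\delta_{jk}$ from the antipode axiom (equivalently, the unitarity of $\bar r$) is precisely the point where the Kac assumption $G\subset U_N^+$ enters, i.e.\ the simplification the paper alludes to; in the general Woronowicz setting this step is where the modular machinery of \cite{wo2}, \cite{mva} becomes unavoidable. So your route buys an actual proof where the paper has a citation, at the cost of some length.

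Two loose ends deserve a line each. First, right invariance is not formally symmetric as stated: you need $PM_\eta=\eta(1)P$, equivalently that $M_\eta^*$ acts as $\overline{\eta(1)}$ on $Fix(r)$; this follows by multiplying the relation $\sum_j r_{ij}\xi_j=\xi_i\cdot 1$ by $r_{ik}^*$, summing over $i$, and using $r^*r=1$ to get $\sum_i r_{ik}^*\xi_i=\xi_k\cdot 1$, then applying $\eta$ --- one more use of unitarity, parallel to but not identical with the left-invariance step. Second, traciality is built into the paper's Definition 3.3 of the Haar integration, so the theorem is not fully established until the separate verification from $S^2=\mathrm{id}$ that you flag is actually carried out. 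Neither point affects the architecture of your proof.
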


\begin{proof}
As already mentioned, this is well-known in the commutative case, $A=C(G)$. 

In the group dual case, $A=C^*(\Gamma)$, the result follows from Proposition 2.1 when $\Gamma$ is abelian, the Haar functional being given by $\int_{\widehat{\Gamma}}g=\delta_{g,1}$, for any $g\in\Gamma$. When $\Gamma$ is no longer abelian, the result still holds, and this is something well-known, and standard.

In the general case now, everything is quite tricky. We refer here to Woronowicz's paper \cite{wo1}, and also to the paper of Maes and Van Daele \cite{mva}, with the remark that our assumption $G\subset U_N^+$, which implies $S^2=id$, simplifies quite a number of things.
\end{proof}

Now back to representations, the basic integration result that we will need is:

\begin{proposition}
For any corepresentation $r\in M_n(\mathcal A)$, the operator 
$$P=\left(id\otimes\int_G\right)r\in M_n(\mathbb C)$$
is the orthogonal projection onto the space $Fix(r)=\{x\in\mathbb C^n|r(x)=x\otimes1\}$.
\end{proposition}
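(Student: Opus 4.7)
The plan is to verify the three defining properties of an orthogonal projection: idempotence $P^2 = P$, identification of the range $Im(P) = Fix(r)$, and self-adjointness $P^* = P$.

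The heart of the argument is a single matrix identity in $M_n(A)$, namely $rP = P$, where $P$ is embedded via the scalars. It is obtained by applying the left invariance $(id\otimes\int_G)\Delta = \int_G(\cdot)1$ to the coordinates: from $\Delta(r_{ij}) = \sum_k r_{ik}\otimes r_{kj}$ we read off $\sum_k r_{ik}P_{kj} = P_{ij}$ in $A$, which is exactly $rP = P$. An identical argument with right invariance yields the companion identity $Pr = P$.

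From $rP = P$ the inclusion $Im(P)\subseteq Fix(r)$ is immediate, since for $y = Px$ we get $r(y\otimes 1) = (rP)(x\otimes 1) = y\otimes 1$. Conversely, if $x\in Fix(r)$ then $\sum_j r_{ij}x_j = x_i\cdot 1$ holds in $A$, and applying $\int_G$ componentwise gives $(Px)_i = x_i$, so $P$ acts as the identity on $Fix(r)$. This proves simultaneously $Im(P) = Fix(r)$ and $P^2 = P$.

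The delicate point is self-adjointness. Taking the adjoint in $A$ of the relation $Pr = P$ and rearranging produces $r^*P^* = P^*$; multiplying by $r$ on the left and using $rr^* = 1$ then yields $rP^* = P^*$. The argument of the previous paragraph, applied to $P^*$ in place of $P$, now gives $Im(P^*)\subseteq Fix(r)$. Since $P$ is the identity on $Fix(r)$, we conclude $PP^* = P^*$; but $PP^*$ is automatically self-adjoint, so $P^* = (PP^*)^* = P$. The main obstacle is exactly here: neither invariance alone suffices for self-adjointness, and one might be tempted to invoke the nontrivial identity $\int_G\circ S = \int_G$, but the symmetric route through both invariances together with unitarity of $r$ is cleaner and more self-contained.
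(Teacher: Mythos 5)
Your proof is correct and follows the same route as the paper: both derive the key identities $rP=Pr=P$ from the two invariance conditions applied to the coefficients $r_{ij}$, and everything else is a consequence of these. The paper stops at ``this gives the result,'' while you supply the deduction it omits --- in particular your derivation of self-adjointness from $r^*P^*=P^*$, unitarity of $r$, and the automatic self-adjointness of $PP^*$ is a valid and tidy way to close the one genuinely non-immediate gap.
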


\begin{proof}
The invariance conditions in Definition 3.3 applied to $\varphi=r_{ij}$ read:
$$\sum_kr_{ik}P_{kj}=\sum_kP_{ik}r_{kj}=P_{ij}$$

Thus we have $rP=Pr=P$, and this gives the result. See \cite{wo1}.
\end{proof}

With these results in hand, we can now develop the Peter-Weyl theory:

\begin{theorem}
The corepresentations of a Woronowicz algebra $(A,u)$, taken modulo equivalence, are subject to the following Peter-Weyl type results:
\begin{enumerate}
\item Each representation appears as a sum of irreducible corepresentations. Also, each irreducible corepresentation appears inside a tensor product of $u,\bar{u}$.

\item The characters of irreducible corepresentations have norm $1$, and are pairwise orthogonal with respect to the scalar product $<a,b>=\int_Gab^*$.

\item The dense subalgebra $\mathcal A\subset A$ decomposes as a direct sum $\mathcal A=\oplus_{r\in Irr(A)}M_{\dim(r)}(\mathbb C)$, with the summands being pairwise orthogonal with respect to $<,>$.
\end{enumerate}
\end{theorem}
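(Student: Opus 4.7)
The plan is to use the projection formula from Proposition 3.5 as the central computational engine for all three parts. I will first prove (1) modulo the ``each irreducible appears in some tensor product'' claim, then derive the orthogonality relations in (2) and the matrix-coefficient refinement underlying (3), and finally close the loop by deducing the missing piece of (1) from the orthogonality.

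For part (1), given a corepresentation $r\in M_n(\mathcal A)$, I would first argue that $End(r)$ is a finite-dimensional $C^*$-subalgebra of $M_n(\mathbb C)$. The key point is that unitarity of $r$ gives $T^*\in Hom(p,r)$ whenever $T\in Hom(r,p)$: adjointing $Tr=pT$ yields $r^*T^*=T^*p^*$, and using $r^*=r^{-1}$, $p^*=p^{-1}$ gives $T^*p=rT^*$. In particular $End(r)$ is stable under $*$. A nontrivial projection $e\in End(r)$ then splits $r=ere\oplus(1-e)r(1-e)$ into two subcorepresentations of smaller dimension, and induction on $n$ yields a decomposition into irreducibles.

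For part (2), I would first establish a Schur lemma: for irreducible $r,p$, $\dim Hom(r,p)\in\{0,1\}$. Indeed, given $T\in Hom(r,p)$, the computation above shows $T^*T\in End(r)=\mathbb C\cdot I$, so $T$ is a scalar multiple of an isometry; combined with $TT^*\in End(p)=\mathbb C\cdot I$, either $T=0$ or $T$ is (a scalar times) a unitary intertwiner, and uniqueness up to scalar comes from composing any two such. Next, the critical step is the reshaping isomorphism $Fix(r\otimes\bar p)\cong Hom(p,r)$: writing a fixed vector as $\sum T_{ij}e_i\otimes e_j$, the fixed-point condition becomes $\sum_{j,b}r_{ij}p_{ab}^*T_{jb}=T_{ia}$, and using $p^*=p^{-1}$ this simplifies to $Tp=rT$. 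Now Proposition~3.5, taken in trace, yields $\dim Fix(s)=\int_G\chi_s$, so
$$\int_G\chi_r\chi_p^*=\int_G\chi_{r\otimes\bar p}=\dim Fix(r\otimes\bar p)=\dim Hom(p,r)=\delta_{r\sim p},$$
proving the stated character orthonormality.

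For part (3), I would apply Proposition~3.5 entrywise rather than in trace. The projection $P=(id\otimes\int_G)(r\otimes\bar p)$ onto $Fix(r\otimes\bar p)\cong Hom(p,r)$ has entries $P_{(i,a),(j,b)}=\int_G r_{ij}p_{ab}^*$. For $r\not\sim p$ irreducible this vanishes, while for $r=p$ irreducible $Fix(r\otimes\bar r)$ is the line spanned by $\frac1{\sqrt{n_r}}\sum_i e_i\otimes e_i$, giving $\int_G r_{ij}r_{ab}^*=\delta_{ia}\delta_{jb}/n_r$. Hence the $n_r^2$ matrix coefficients of an irreducible $r$ are linearly independent and span a subspace $C_r\subset\mathcal A$, which via $r_{ij}\leftrightarrow e_{ij}$ and the comultiplication $\Delta(r_{ij})=\sum_k r_{ik}\otimes r_{kj}$ becomes a copy of the matrix coalgebra $M_{n_r}(\mathbb C)$; moreover, the $C_r$'s for distinct irreducibles are mutually orthogonal. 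Since $\mathcal A$ is generated by $u_{ij}$ and $u_{ij}^*$, it is spanned by matrix coefficients of tensor products $u^{\otimes k}\otimes\bar u^{\otimes l}$, which by part~(1) decompose into irreducibles; this gives $\mathcal A=\bigoplus_r C_r$. The remaining claim of (1) now follows: an irreducible $r$ never appearing in any $u^{\otimes k}\otimes\bar u^{\otimes l}$ would have $C_r$ orthogonal to all of $\mathcal A$, hence $C_r=0$, contradicting $\dim C_r=n_r^2>0$. The main technical hurdle is the reshaping isomorphism $Fix(r\otimes\bar p)\cong Hom(p,r)$ and the companion Schur lemma, both of which rely essentially on unitarity; the rest is bookkeeping around Proposition~3.5.
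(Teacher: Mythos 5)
Your proposal is correct and follows exactly the route the paper intends: the paper's proof is a one-line deferral to ``the classical argument, with the Haar-functional statements coming from Proposition 3.5'' and to Woronowicz, and your argument is precisely that classical Peter--Weyl scheme (unitarity giving $*$-stability of the $Hom$ spaces, Schur's lemma, the reshaping $Fix(r\otimes\bar{p})\cong Hom(p,r)$, and Proposition 3.5 applied in trace and entrywise), with the details actually written out. The only cosmetic remark is that coefficients of $\mathcal A$ live in tensor products of $u,\bar{u}$ with arbitrarily interleaved colors, not just $u^{\otimes k}\otimes\bar{u}^{\otimes l}$, but this does not affect the argument.
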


\begin{proof}
This follows as in the classical case, with the various statements involving the Haar functional basically coming from Proposition 3.5 above. See Woronowicz \cite{wo2}.
\end{proof}

As a first application of these results, we have:

\begin{theorem}
Let $A_{full}$ be the enveloping $C^*$-algebra of $\mathcal A$, and let $A_{red}$ be the quotient of $A$ by the null ideal of the Haar integration. The following are then equivalent:
\begin{enumerate}
\item The Haar functional of $A_{full}$ is faithful.

\item The projection map $A_{full}\to A_{red}$ is an isomorphism.

\item The counit map $\varepsilon:A_{full}\to\mathbb C$ factorizes through $A_{red}$.

\item Kesten criterion: $N\in\sigma(Re(\chi_u))$, inside the algebra $A_{red}$.
\end{enumerate}
If this is the case, we say that the underlying discrete quantum group $\Gamma$ is amenable.
\end{theorem}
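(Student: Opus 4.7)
The plan is to prove the cycle $(1)\Leftrightarrow(2)\Rightarrow(3)\Rightarrow(4)\Rightarrow(3)\Rightarrow(2)$, with the real content in the Kesten step $(3)\Leftrightarrow(4)$ and the Fell absorption step $(3)\Rightarrow(2)$.

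For $(1)\Leftrightarrow(2)$, the idea is to unpack the definition of $A_{red}$ as the quotient $A_{full}/\{a:\int_G a^*a=0\}$, so that the canonical surjection $A_{full}\to A_{red}$ is injective precisely when this null ideal vanishes, i.e., when the Haar functional on $A_{full}$ is faithful. The implication $(2)\Rightarrow(3)$ is immediate since $\varepsilon$ is part of the data on $A_{full}$. For $(3)\Rightarrow(4)$, note that $\varepsilon:A_{red}\to\mathbb{C}$ is a $*$-character, so for any normal $a\in A_{red}$ the element $a-\varepsilon(a)$ lies in $\ker\varepsilon$ and is therefore not invertible. Applying this to $a=\mathrm{Re}(\chi_u)$ and using $\varepsilon(u_{ii})=1$ yields $N=\varepsilon(\mathrm{Re}(\chi_u))\in\sigma(\mathrm{Re}(\chi_u))$ inside $A_{red}$.

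The key analytic step is $(4)\Rightarrow(3)$. The triangle inequality gives $\|\mathrm{Re}(\chi_u)\|_{A_{red}}\leq N$, so assumption $(4)$ forces equality, and by standard $C^*$-algebra theory there exists a state $\omega$ on $A_{red}$ with $\omega(\mathrm{Re}(\chi_u))=N$. Expanding $\mathrm{Re}(\chi_u)=\frac{1}{2}\sum_i(u_{ii}+u_{ii}^*)$ and using $|\omega(u_{ii})|\leq 1$ forces $\omega(u_{ii})=1$ for every $i$. Combining Cauchy--Schwarz with the unitarity relations $\sum_k u_{kj}^*u_{kj}=1$ and $\sum_k u_{ik}u_{ik}^*=1$ then yields
\[
\omega\bigl((u_{ij}-\delta_{ij})^*(u_{ij}-\delta_{ij})\bigr)=0=\omega\bigl((u_{ij}^*-\delta_{ij})^*(u_{ij}^*-\delta_{ij})\bigr)
\]
for all $i,j$. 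Hence each $u_{ij}-\delta_{ij}$ and $u_{ij}^*-\delta_{ij}$ lies in the right null ideal of $\omega$, and a second application of Cauchy--Schwarz propagates this to $\omega(au_{ij})=\delta_{ij}\omega(a)$ and $\omega(au_{ij}^*)=\delta_{ij}\omega(a)$ for every $a\in A_{red}$. By induction on word length in the generators, $\omega$ agrees with $\varepsilon$ on the dense $*$-subalgebra $\mathcal{A}$, and so $\varepsilon$ extends continuously to all of $A_{red}$.

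Finally, $(3)\Rightarrow(2)$ is the Fell-type absorption step, and this is where I expect the main conceptual obstacle to lie. Given the counit on $A_{red}$, the plan is to show that every GNS representation $\pi$ of $A_{full}$ is weakly contained in the regular representation $\lambda$, by exhibiting $\pi$ as a compression of $(\pi\otimes\lambda)\circ\Delta$ via the counit and invoking the fact that $(\pi\otimes\lambda)\circ\Delta$ is unitarily equivalent, through the Woronowicz multiplicative unitary, to a multiple of $\lambda$. This forces $\|a\|_{A_{full}}=\|a\|_{A_{red}}$ on $\mathcal{A}$, hence $A_{full}=A_{red}$. The delicate technical point is that $\Delta$ descends from $\mathcal{A}$ to a map $A_{red}\to A_{red}\otimes A_{red}$ only via the invariance of the Haar state from Theorem~3.4; once this is in place, the absorption argument runs along standard lines, as in Woronowicz~\cite{wo2}.
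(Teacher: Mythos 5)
Your proposal is correct in substance, but it is worth noting that the paper does not actually prove this theorem: it simply asserts that the statement is well-known for $A=C^*(\Gamma)$ with $\Gamma$ a classical discrete group, and that the general case follows by replacing group elements with irreducible corepresentations, deferring to \cite{ntu}. What you have written is essentially the standard argument from that literature (Neshveyev--Tuset, B\'edos--Murphy--Tuset), spelled out: the tautological $(1)\Leftrightarrow(2)$, the character argument for $(3)\Rightarrow(4)$, the Kesten-type state argument for $(4)\Rightarrow(3)$ (the chain $\omega(\mathrm{Re}\,\chi_u)=N\Rightarrow\omega(u_{ii})=1\Rightarrow\omega((u_{ij}-\delta_{ij})^*(u_{ij}-\delta_{ij}))=0\Rightarrow\omega=\varepsilon$ on $\mathcal A$ is exactly right, and correctly uses both unitarity relations, which is where the hypothesis $G\subset U_N^+$ enters), and the Fell absorption argument for $(3)\Rightarrow(2)$. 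Two caveats. First, in $(1)\Leftrightarrow(2)$ the theorem defines $A_{red}$ as a quotient of $A$, not of $A_{full}$, so you need the (easy but not entirely vacuous) observation that the Haar states of $A_{full}$, $A$ and $A_{red}$ agree on the common dense subalgebra $\mathcal A$, so that the null-ideal quotients coincide. Second, your $(3)\Rightarrow(2)$ is still a sketch at the decisive point: to make it a proof you must state the absorption identity precisely, namely that $(\lambda\otimes\pi)\Delta(a)$ (in the correct leg order) is unitarily equivalent to $1\otimes\pi$-independent data, i.e.\ $\|(\lambda\otimes\pi)\Delta(a)\|=\|\lambda(a)\|$ for every representation $\pi$ of $A_{full}$ and $a\in\mathcal A$, and then recover $\pi(a)=(\varepsilon\otimes\mathrm{id})\bigl((\lambda\otimes\pi)\Delta(a)\bigr)$ using that slicing by the state $\varepsilon$ on $A_{red}$ is contractive on the minimal tensor product. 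As written, ``a multiple of $\lambda$'' and ``compression via the counit'' are the right slogans but not yet an argument; since this is the only implication with real analytic content, it deserves the full detail. Everything else in your write-up is complete and correct.
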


\begin{proof}
This is well-known in the group dual case, $A=C^*(\Gamma)$, with $\Gamma$ being a usual discrete group. In general, the result follows by adapting the group dual case proof, by replacing where needed group elements by irreducible corepresentations. See \cite{ntu}. 
\end{proof}

The above results suggest that the ``combinatorics'' of a discrete quantum group $\Gamma$, appearing via $A=C^*(\Gamma)$, should come from the fusion rules on $Irr(A)$. This is indeed the case, and a whole theory can be developed here. See \cite{bve}, \cite{dpr}, \cite{fre}.

Let us explain now Woronowicz's Tannakian duality result from \cite{wo3}, in its ``soft'' form, worked out in \cite{mal}. The precise definition that we will need is:

\begin{definition}
The Tannakian category associated to a Woronowicz algebra $(A,u)$ is the collection $C=(C_{kl})$ of vector spaces
$$C_{kl}=Hom(u^{\otimes k},u^{\otimes l})$$
where the tensor powers, taken with respect to colored integers $k,l=\circ\bullet\bullet\circ\ldots$ are given by $u^\emptyset=1,u^\circ=u$, $u^\bullet=\bar{u}$ and multiplicativity.
\end{definition}

Observe that $C$ is indeed a tensor category, and more precisely is a tensor subcategory of the tensor category formed by the spaces $M_{kl}=\mathcal L(H^{\otimes k},H^{\otimes l})$, where $H\simeq\mathbb C^N$ is the Hilbert space where $u\in M_N(A)$ coacts, and where the tensor powers $H^{\otimes k}$ with $k$ colored integer are defined by $H^\emptyset=\mathbb C,H^\circ=H,H^\bullet=\bar{H}\simeq H$ and multiplicativity.

The Tannakian duality result, in its ``soft'' form, is as follows:

\begin{theorem}
Given a Woronowicz algebra $(A,u)$ with $u\in M_N(A)$, with associated Tannakian category $C=(C_{kl})$, we have
$$A=C(U_N^+)\big/\left<T\in Hom(v^{\otimes k},v^{\otimes l})\Big|\forall k,l,\forall T\in C_{kl}\right>$$
where $v$ denotes the fundamental corepresentation of $C(U_N^+)$.
\end{theorem}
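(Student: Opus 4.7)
The plan is to identify the right-hand side algebra, call it $A_C$, with $A$ via a natural surjection $\pi:A_C\twoheadrightarrow A$, and then use Peter-Weyl to upgrade the surjection to an isomorphism.

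First I would verify that $A_C$ itself carries a Woronowicz algebra structure with fundamental corepresentation the image of $v$, by showing that the Hopf structure maps $\Delta,\varepsilon,S$ of $C(U_N^+)$ descend to $A_C$. The point is that the two-sided ideal generated by the relations $Tv^{\otimes k}=v^{\otimes l}T$ (for $T\in C_{kl}$) is a Hopf ideal: applying $\Delta$ to such a relation yields another relation of the same form in $A_C\otimes A_C$, because the corepresentation identity makes $v^{\otimes k}$ ``multiplicative'' under $\Delta$, and the checks for $\varepsilon,S$ are analogous, using the formulae $\varepsilon(v_{ij})=\delta_{ij}$ and $S(v_{ij})=v_{ji}^*$. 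The universal property of $C(U_N^+)$ next gives a surjection $q:C(U_N^+)\twoheadrightarrow A$ with $v_{ij}\mapsto u_{ij}$, and by the very definition $C_{kl}=Hom(u^{\otimes k},u^{\otimes l})$, the map $q$ annihilates the defining relations of $A_C$, so it factors through a surjection $\pi:A_C\twoheadrightarrow A$.

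The easy half of what remains is to identify the Tannakian category of $A_C$: the inclusion $Hom_{A_C}(v^{\otimes k},v^{\otimes l})\supseteq C_{kl}$ holds by construction, while the surjectivity of $\pi$ gives the reverse inclusion, since any $T$ with $Tv^{\otimes k}=v^{\otimes l}T$ inside $A_C$ also satisfies $Tu^{\otimes k}=u^{\otimes l}T$ inside $A$, placing it in $C_{kl}$.

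The main obstacle is deducing that $\pi$ is injective from this categorical equality. The argument would proceed through Peter-Weyl (Theorem 3.6): the smooth subalgebra $\mathcal{A}_C$ decomposes as $\bigoplus_r M_{\dim(r)}(\mathbb{C})$ over irreducible subobjects of the tensor powers $v^{\otimes k}$, and both the list of such irreducibles and their multiplicities are intrinsically controlled by the algebras $End_{A_C}(v^{\otimes k})=C_{kk}$ and their systems of minimal central projections. Since exactly the same Tannakian data controls the Peter-Weyl decomposition of $\mathcal{A}\subset A$, the surjection $\pi$ must send each isotypic block of $\mathcal{A}_C$ isomorphically onto the corresponding block of $\mathcal{A}$, yielding injectivity at the level of the smooth algebras. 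Injectivity then extends to the $C^*$-completions because both $A_C$ and $A$ inherit the universal (full) $C^*$-norm from $C(U_N^+)$, so that no norm collapse can occur under $\pi$; this is the delicate technical point, and it is precisely what the ``soft'' formulation from \cite{mal} is designed to streamline.
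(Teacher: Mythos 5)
Your proposal is correct, and it actually supplies the argument that the paper's own proof delegates to a citation: the paper only constructs the surjection $C(U_N^+)/\langle\cdots\rangle\to A$ from universality (exactly as you do) and then invokes \cite{wo3} or \cite{mal} for the converse arrow. What you have written out is, in substance, the direct argument of \cite{mal}. Two of your steps deserve emphasis as the real content. First, your observation that the Tannakian category of $A_C$ coincides with $C$ -- one inclusion by construction, the other by pushing any intertwining relation forward along the surjection $\pi$ -- is precisely what makes the ``soft'' form soft: one never has to reconstruct a quantum group from an abstract tensor category, only to compare two concrete ones. Second, the Peter--Weyl comparison is the right way to conclude: since irreducibles and their equivalences on both sides are governed by minimal projections in, and partial isometries between, the \emph{same} spaces $C_{kl}$, the map $\pi$ restricts to a surjection between isotypic blocks of equal dimension $\dim(r)^2$, hence is injective on the dense $*$-subalgebras. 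The one point to state more carefully is the passage to the $C^*$-level: $A_C$ carries the universal norm by construction, but a general Woronowicz algebra $A$ need not (it could be the reduced completion), so the theorem as stated is really an isomorphism of the underlying Hopf $*$-algebras, or of the full versions; you flag this as the delicate point, and the paper glosses over it entirely. With that caveat made explicit, your proof is complete and more informative than the one in the text.
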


\begin{proof}
If we denote by $A'$ the universal algebra on the right, we have a morphism $A'\to A$, because the canonical morphism $C(U_N^+)\to A$ factorizes through the ideal defining $A'$, by definition of the Tannakian category $C=(C_{kl})$. Conversely now, the fact that we have an arrow $A\to A'$ follows from Woronowicz's Tannakian duality results in \cite{wo3}, but there is as well  a direct, Hopf algebra proof for this, worked out in \cite{mal}.
\end{proof}

Generally speaking, knowing the Tannakian category of a Woronowicz algebra $A$ solves most of the fundamental problems regarding $A$. As an  illustration here, let us discuss the computation of the Haar state of $A$. The formula is very simple, as follows:

\begin{theorem}
Assuming that $A=C(G)$ has Tannakian category $C=(C_{kl})$, the Haar integration over $G$ is given by the Weingarten type formula
$$\int_Gu_{i_1j_1}^{e_1}\ldots u_{i_kj_k}^{e_k}=\sum_{\pi,\sigma\in D_k}\delta_\pi(i)\delta_\sigma(j)W_k(\pi,\sigma)$$
for any colored integer $k=e_1\ldots e_k$ and any multi-indices $i,j$, where $D_k$ is a linear basis of $C_{kk}$, $\delta_\pi(i)=<\pi,e_{i_1}\otimes\ldots\otimes e_{i_k}>$, and $W_k=G_k^{-1}$, with $G_k(\pi,\sigma)=<\pi,\sigma>$.
\end{theorem}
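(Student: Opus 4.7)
The plan is to combine Proposition 3.5, which identifies the ``Haar projection'' attached to a corepresentation with the orthogonal projection onto its space of fixed vectors, with the classical linear-algebra formula that expresses an orthogonal projection in terms of an arbitrary basis of its range.

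First, I would apply Proposition 3.5 to the corepresentation $r=u^{\otimes k}$ attached to the colored integer $k=e_1\ldots e_k$, which acts on $H^{\otimes k}$ with $H=\mathbb C^N$. This identifies the scalar
$$P_{ij}=\int_Gu_{i_1j_1}^{e_1}\ldots u_{i_kj_k}^{e_k}$$
with the $(i,j)$-entry, in the canonical basis of $H^{\otimes k}$, of the orthogonal projection onto $Fix(u^{\otimes k})$. In view of Definition 3.8 and the identification of $Hom$-spaces with fixed vectors via Frobenius reciprocity, this subspace is precisely the span of $D_k$, so the entire problem reduces to writing the projection onto $\mathrm{span}(D_k)$ explicitly in terms of the basis $D_k$.

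Next, I would invoke the following standard fact. If $V\subset H^{\otimes k}$ is a subspace with a chosen basis $\{\pi\}_{\pi\in D_k}$ and $B$ denotes the matrix whose columns are the vectors $\pi\in D_k$ expressed in the canonical basis of $H^{\otimes k}$, then $B^*B$ is exactly the Gram matrix $G_k(\pi,\sigma)=<\pi,\sigma>$, and the orthogonal projection onto $V$ equals $P_V=B(B^*B)^{-1}B^*=BW_kB^*$. Writing out the $(i,j)$-entry of $BW_kB^*$ in coordinates, and using the notation $\delta_\pi(i)=<\pi,e_{i_1}\otimes\ldots\otimes e_{i_k}>$ for the coordinates of $\pi\in D_k$, one recovers exactly the right-hand side of the Weingarten formula.

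Combining the two identities yields Theorem 3.10. The main obstacle I expect is not conceptual but notational: one needs to carefully align the colored tensor conventions used in $u^{\otimes k}$ and $H^{\otimes k}$, as well as the Hermitian convention used for the scalar product $<a,b>=\int_Gab^*$ on the Gram matrix $G_k$, so that any complex conjugation hidden in $\delta_\sigma(j)$ is absorbed into the definition of $W_k$. Once these conventions are consistent, the formula is immediate from Proposition 3.5 and the standard projection identity above.
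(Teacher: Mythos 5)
Your proposal is correct and follows essentially the same route as the paper: both reduce the integral to the orthogonal projection onto $Fix(u^{\otimes k})=span(D_k)$ via Proposition 3.5, and then express that projection through the inverse of the Gram matrix $G_k$ (your $B(B^*B)^{-1}B^*=BW_kB^*$ is the same standard computation as the paper's $P=WE$ factorization). Your remark about aligning the Hermitian conventions and the Frobenius identification of $C_{kk}$ with fixed vectors addresses exactly the point the paper glosses over.
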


\begin{proof}
We know from Proposition 3.5 above that the integrals in the statement form altogether the orthogonal projection $P^e$ onto the space $Fix(u^{\otimes k})=span(D_k)$. 

By a standard linear algebra computation, it follows that we have $P=WE$, where $E(x)=\sum_{\pi\in D_k}<x,\xi_\pi>\xi_\pi$, and where $W$ is the inverse on $span(T_\pi|\pi\in D_k)$ of the restriction of $E$. But this restriction is the linear map given by $G_k$, and so $W$ is the linear map given by $W_k$, and this gives the formula in the statement. See \cite{bco}.
\end{proof}

\section{Basic examples}

We will show now that the Tannakian categories of the main 6 quantum groups appear in the simplest possible way: from certain ``categories'' of set-theoretic partitions.

In order to explain this material, let us begin with:

\begin{definition}
Let $P(k,l)$ be the set of partitions between an upper colored integer $k$, and a lower colored integer $l$. A set $D=\bigsqcup_{k,l}D(k,l)$ with $D(k,l)\subset P(k,l)$ is called a category of partitions when it is stable under the following operations:
\begin{enumerate}
\item The horizontal concatenation operation, $(\pi,\sigma)\to[\pi\sigma]$.

\item The vertical concatenation $(\pi,\sigma)\to[^\sigma_\pi]$, when the middle symbols match.

\item The upside-down turning operation $*$, with switching of the colors, $\circ\leftrightarrow\bullet$.
\end{enumerate}
\end{definition} 

Here, in the definition of the second operation, we agree that the connected components that can appear in the middle, when concatenating, are erased afterwards.

The relation with the Tannakian categories comes from:

\begin{proposition}
Each $\pi\in P(k,l)$ produces a linear map $T_\pi:(\mathbb C^N)^{\otimes k}\to(\mathbb C^N)^{\otimes l}$, 
$$T_\pi(e_{i_1}\otimes\ldots\otimes e_{i_k})=\sum_{j_1\ldots j_l}\delta_\pi\begin{pmatrix}i_1&\ldots&i_k\\ j_1&\ldots&j_l\end{pmatrix}e_{j_1}\otimes\ldots\otimes e_{j_l}$$
with the Kronecker type symbols $\delta_\pi\in\{0,1\}$ depending on whether the indices fit or not. The assignement $\pi\to T_\pi$ is categorical, in the sense that we have
$$T_\pi\otimes T_\sigma=T_{[\pi\sigma]}\quad,\quad T_\pi T_\sigma=N^{c(\pi,\sigma)}T_{[^\sigma_\pi]}\quad,\quad T_\pi^*=T_{\pi^*}$$
where $c(\pi,\sigma)$ are certain integers, coming from the erased components in the middle.
\end{proposition}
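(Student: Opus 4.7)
The plan is to verify each identity by unwinding the definition of $\delta_\pi$: namely, $\delta_\pi(i|j)=1$ exactly when the string of indices $i_1,\ldots,i_k$ (upper) and $j_1,\ldots,j_l$ (lower) is constant on each block of $\pi$, and $0$ otherwise. Under this reading, (1) and (3) are short, while (2) is the combinatorial heart of the statement.

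For (1), the diagram $[\pi\sigma]$ is the disjoint union of the block systems of $\pi$ and $\sigma$, living on disjoint sets of points. Hence
\[
\delta_{[\pi\sigma]}\!\begin{pmatrix}i\,i'\\ j\,j'\end{pmatrix}=\delta_\pi\!\begin{pmatrix}i\\ j\end{pmatrix}\delta_\sigma\!\begin{pmatrix}i'\\ j'\end{pmatrix},
\]
and applying both sides to tensor products of basis vectors yields $T_{[\pi\sigma]}=T_\pi\otimes T_\sigma$. For (3), the adjoint satisfies $\langle T_\pi^*e_j,e_i\rangle=\langle e_j,T_\pi e_i\rangle=\delta_\pi(i|j)$, and since the upside-down reflection $\pi^*$ simply swaps the two rows of the index symbol, $\delta_\pi(i|j)=\delta_{\pi^*}(j|i)$, giving $T_\pi^*=T_{\pi^*}$.

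The main point is (2). Expanding directly,
\[
T_\pi T_\sigma(e_{i_1}\otimes\ldots\otimes e_{i_k})=\sum_{m,j}\delta_\sigma\!\begin{pmatrix}i\\ m\end{pmatrix}\delta_\pi\!\begin{pmatrix}m\\ j\end{pmatrix}e_{j_1}\otimes\ldots\otimes e_{j_l},
\]
so we must show $\sum_m \delta_\sigma(i|m)\delta_\pi(m|j)=N^{c(\pi,\sigma)}\delta_{[^\sigma_\pi]}(i|j)$. Geometrically, stacking $\sigma$ above $\pi$ and identifying the lower row of $\sigma$ with the upper row of $\pi$ produces a glued diagram whose connected components split into two families: those meeting the surviving top or bottom row, which are by definition the blocks of $[^\sigma_\pi]$ (after the middle is erased), and those living entirely in the middle, the \emph{closed loops}. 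A choice of $m$ makes both Kronecker symbols nonzero iff $m$ is constant along every middle segment of a glued component and is compatible with $i,j$ on those touching the top or bottom. Constancy along the surviving components translates exactly into $\delta_{[^\sigma_\pi]}(i|j)$, while each closed loop contributes one free middle index summed over $\{1,\ldots,N\}$, hence a factor of $N$. Defining $c(\pi,\sigma)$ to be the number of such closed loops produces the claimed identity.

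The main obstacle is the bookkeeping in (2): one must verify rigorously that the constraints from the two Kronecker symbols on the middle row, combined with constancy on surviving components, are precisely equivalent to constancy on the blocks of $[^\sigma_\pi]$. The cleanest formulation is via block merging — two outer points lie in the same block of $[^\sigma_\pi]$ iff they can be linked by a chain alternating through blocks of $\sigma$ and of $\pi$, which is exactly the reflexive-transitive closure of the equivalence relation generated by the two Kronecker constraints.
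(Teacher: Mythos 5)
Your proof is correct, and it is exactly the standard argument: the paper itself gives no details here, simply declaring the three formulae ``elementary'' and deferring to the reference \cite{bsp}, where the composition formula is proved by precisely your loop-counting computation (glued components meeting the outer rows reproduce $\delta_{[^\sigma_\pi]}$, while each closed middle loop contributes a free summed index and hence a factor of $N$, so that $c(\pi,\sigma)$ is the number of such loops). Your identification of the blocks of $[^\sigma_\pi]$ as the alternating-chain closure of the blocks of $\pi$ and $\sigma$ is the right way to make the bookkeeping rigorous, so nothing is missing.
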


\begin{proof}
All three formulae are indeed elementary to establish. See \cite{bsp}.
\end{proof}

In relation now with the quantum groups, we have the following notion:

\begin{definition}
A closed subgroup $G\subset U_N^+$ is called easy when we have
$$Hom(u^{\otimes k},u^{\otimes l})=span\left(T_\pi\Big|\pi\in D(k,l)\right)$$
for any colored integers $k,l$, for a certain category of partitions $D\subset P$.
\end{definition}

As we will see in what follows, this formalism covers many interesting examples of groups and quantum groups. Let us first go back to the examples that we have. These quantum groups are all easy, coming from certain categories of pairings, as follows:

\begin{theorem}
The basic unitary quantum groups are all easy, as follows,
$$\xymatrix@R=15mm@C=15mm{
U_N\ar[r]&U_N^*\ar[r]&U_N^+\\
O_N\ar[r]\ar[u]&O_N^*\ar[r]\ar[u]&O_N^+\ar[u]}
\ \ \ \ \ \xymatrix@R=8mm@C=5mm{\\ :&\\&\\}\ \ 
\xymatrix@R=16mm@C=15mm{
\mathcal P_2\ar[d]&\mathcal P_2^*\ar[l]\ar[d]&\mathcal{NC}_2\ar[l]\ar[d]\\
P_2&P_2^*\ar[l]&NC_2\ar[l]}$$
with the corresponding categories of partitions being the following ones:
\begin{enumerate}
\item $P_2,NC_2$ are respectively the categories of pairings, and of noncrossing pairings, and $P_2^*$ is the category of pairings having the property that, when the legs are relabelled clockwise $\circ\bullet\circ\bullet\ldots$, each string connects $\circ-\bullet$.

\item $\mathcal P_2$ is the category of pairings which are ``matching'', in the sense that the vertical strings connect either $\circ-\circ$ or $\bullet-\bullet$, and the horizontal strings connect $\circ-\bullet$, and $\mathcal P_2^*=\mathcal P_2\cap P_2^*$ and $\mathcal{NC}_2=\mathcal P_2\cap NC_2$.
\end{enumerate}
\end{theorem}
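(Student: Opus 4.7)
The proof rests on applying Tannakian duality (Theorem 3.10) in both directions. For each quantum group $G$ in the diagram, with associated partition category $D$, I would form the candidate tensor category $C_D=(C_D^{kl})$ with $C_D^{kl}=\mathrm{span}(T_\pi\,|\,\pi\in D(k,l))$; by Proposition 4.2 the assignment $\pi\mapsto T_\pi$ respects tensor product, composition and involution, so $C_D$ is genuinely a tensor $*$-category whenever $D$ is closed under the three partition operations of Definition 4.1. Feeding $C_D$ into Theorem 3.10 produces a quantum group $G_D\subset U_N^+$, and the task reduces to showing $G=G_D$, by comparing defining relations on each side.

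In each case I would pick a small generating set of partitions and translate the condition $T_\pi\in\mathrm{Hom}(u^{\otimes k},u^{\otimes l})$ into algebraic relations on the entries $u_{ij}$, using Definition 3.1 and the explicit formula for $T_\pi$ from Proposition 4.2. The matching cap partitions generating $\mathcal{NC}_2$ produce exactly the biunitarity relations $uu^*=u^*u=\bar u\bar u^*=\bar u^*\bar u=I$ defining $C(U_N^+)$, so $G_{\mathcal{NC}_2}=U_N^+$. Adjoining the non-matching cap with colors $\circ\circ$ (and its rotates) to obtain $NC_2$ yields the further relation $uu^t=I$, which combined with biunitarity forces $u=\bar u$, recovering $O_N^+$. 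Adjoining the crossing gives, via the intertwining condition for $T_X$, commutativity $u_{ij}u_{kl}=u_{kl}u_{ij}$; Gelfand's theorem then yields a commutative $C^*$-algebra, identified with $C(O_N)$ or $C(U_N)$ via the universal property. Finally, adjoining the ``fat crossing'' on three strings that generates $P_2^*$ inside $P_2$ produces precisely the half-commutation relations $abc=cba$ defining $O_N^*$ and $U_N^*$ in Proposition 2.8. In each case $G_D$ and $G$ satisfy the same list of defining relations, and therefore coincide.

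The main obstacle I expect is the purely combinatorial claim that the partitions I have singled out really do generate the full categories $D$ under horizontal concatenation, vertical concatenation (with matching middle) and the color-switching upside-down turn of Definition 4.1. For instance, one must verify that every matching noncrossing pairing in $\mathcal{NC}_2$ is built from the single matching cap by these operations, that every pairing in $P_2$ is obtained by combining the cap with the crossing, and that $P_2^*$ is precisely the closure of $NC_2$ together with the half-crossing. Once these generation statements are granted for each $D$, the relations on $G_D$ coming from the generators match the defining relations of $G$, whence $G=G_D$ and Theorem 3.10 identifies $\mathrm{Hom}_G(u^{\otimes k},u^{\otimes l})$ with $C_D^{kl}$, proving easiness. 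The required combinatorial generation results are available in \cite{bsp}, \cite{bve}, \cite{bdu}.
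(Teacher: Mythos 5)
Your proposal is correct and follows essentially the same route as the paper: translate the defining relations of each quantum group into the statement that specific small partitions (caps, color-switching strings, crossings, the three-string half-crossing) lie in the Tannakian category, invoke Theorem 3.10 to identify the category as the one generated by these partitions, and defer the combinatorial claim that these generators produce $\mathcal{NC}_2$, $NC_2$, $\mathcal P_2$, $\mathcal P_2^*$, $P_2$, $P_2^*$ to \cite{bra}, \cite{bco}, \cite{bve}, \cite{bdu}. The only cosmetic difference is that the paper obtains $O_N$ and $O_N^*$ by intersecting categories ($O_N=O_N^+\cap U_N$, $O_N^*=O_N^+\cap U_N^*$) rather than by directly adjoining generators, which changes nothing of substance.
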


\begin{proof}
The results for $O_N,U_N$ go back to Brauer's paper \cite{bra}, their free versions are worked out in \cite{bco}, and the half-liberated results are from \cite{bve}, \cite{bdu}, the idea being as follows:

(1) $U_N^+$ is defined via the relations $u^*=u^{-1},u^t=\bar{u}^{-1}$, which tell us that the operators $T_\pi$, with $\pi={\ }^{\,\cap}_{\circ\bullet}$ and $\pi={\ }^{\,\cap}_{\bullet\circ}$, must be in the associated Tannakian category $C$. We therefore obtain $C=span(T_\pi|\pi\in D)$, with $D=<{\ }^{\,\cap}_{\circ\bullet}\,\,,{\ }^{\,\cap}_{\bullet\circ}>={\mathcal NC}_2$, as claimed.

(2) $O_N^+\subset U_N^+$ is defined by imposing the relations $u_{ij}=\bar{u}_{ij}$, which tell us that the operators $T_\pi$, with $\pi=|^{\hskip-1.32mm\circ}_{\hskip-1.32mm\bullet}$ and $\pi=|_{\hskip-1.32mm\circ}^{\hskip-1.32mm\bullet}$, must be in the associated Tannakian category $C$. We therefore obtain $C=span(T_\pi|\pi\in D)$, with $D=<\mathcal{NC}_2,|^{\hskip-1.32mm\circ}_{\hskip-1.32mm\bullet},|_{\hskip-1.32mm\circ}^{\hskip-1.32mm\bullet}>=NC_2$, as claimed.

(3) $U_N\subset U_N^+$ is defined via the relations $[u_{ij},u_{kl}]=0$ and $[u_{ij},\bar{u}_{kl}]=0$, which tell us that the operators $T_\pi$, with $\pi={\slash\hskip-2.1mm\backslash}^{\hskip-2.5mm\circ\circ}_{\hskip-2.5mm\circ\circ}$ and $\pi={\slash\hskip-2.1mm\backslash}^{\hskip-2.5mm\circ\bullet}_{\hskip-2.5mm\bullet\circ}$, must be in the associated Tannakian category $C$. Thus $C=span(T_\pi|\pi\in D)$, with $D=<\mathcal{NC}_2,{\slash\hskip-2.1mm\backslash}^{\hskip-2.5mm\circ\circ}_{\hskip-2.5mm\circ\circ},{\slash\hskip-2.1mm\backslash}^{\hskip-2.5mm\circ\bullet}_{\hskip-2.5mm\bullet\circ}>=\mathcal P_2$, as claimed.

(4) Regarding now $U_N^*\subset U_N^+$, the corresponding Tannakian category is generated by the operators $T_\pi$, with $\pi={\slash\hskip-2.1mm\backslash\hskip-1.65mm|}$\,, taken with all the possible $2^3=8$ matching colorings. Since these latter 8 partitions generate the category $\mathcal P_2^*$, we obtain the result.

(5) In order to deal now with $O_N$, we can simply use the formula $O_N=O_N^+\cap U_N$. At the categorical level, this tells us that the associated Tannakian category is given by $C=span(T_\pi|\pi\in D)$, with $D=<NC_2,\mathcal P_2>=P_2$, as claimed.

(6) Finally, for $O_N^*$ we can proceed similarly, by using the formula $O_N^*=O_N^+\cap U_N^*$. At the categorical level, this tells us that the associated Tannakian category is given by $C=span(T_\pi|\pi\in D)$, with $D=<NC_2,\mathcal P_2^*>=P_2^*$, as claimed.
\end{proof}

Let us discuss now some more examples, which are of the same nature as the 6 basic ones. The idea is that we can twist the basic quantum groups, as follows:

\begin{proposition}
We have quantum groups as follows, obtained via the twisted commutation relations $ab=\pm ba$, and twisted half-commutation relations $abc=\pm cba$,
$$\xymatrix@R=15mm@C=15mm{
\bar{U}_N\ar[r]&\bar{U}_N^*\ar[r]&U_N^+\\
\bar{O}_N\ar[r]\ar[u]&\bar{O}_N^*\ar[r]\ar[u]&O_N^+\ar[u]}$$
where the signs for $\bar{U}_N$ correspond to anticommutation for distinct entries on rows and columns, and commutation otherwise, and the other signs come from functoriality.
\end{proposition}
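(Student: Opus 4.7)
The plan is to realize each of the four algebras as a universal $C^*$-algebra generated by the entries $u_{ij}$ of an orthogonal matrix (for $\bar{O}_N, \bar{O}_N^*$) or a biunitary matrix (for $\bar{U}_N, \bar{U}_N^*$), subject to the prescribed twisted relations with an explicit sign function $\epsilon(i,j;k,l)\in\{\pm 1\}$, and then apply the universal property, following the pattern of the proof of Proposition 2.6. Concretely, for $\bar{U}_N$ one fixes the sign convention stated in the proposition. For the half-liberated algebras $\bar{O}_N^*, \bar{U}_N^*$, the twisted half-commutation sign in $abc=\eta\,cba$ is the one obtained by expanding the swap $abc\mapsto cba$ as a composition of elementary commutations in the fully twisted algebra, and collecting the accumulated signs; this is what is meant by ``functoriality'', and it guarantees that the inclusions $\bar{O}_N\subset\bar{O}_N^*$ and $\bar{U}_N\subset\bar{U}_N^*$ hold at the level of defining relations.

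For each $A$ among the four, I would then verify that the formulae $\Delta(u_{ij})=\sum_ku_{ik}\otimes u_{kj}$, $\varepsilon(u_{ij})=\delta_{ij}$ and $S(u_{ij})=u_{ji}^*$ extend by universality to $C^*$-morphisms $A\to A\otimes A$, $A\to\mathbb C$ and $A\to A^{opp}$, the point being to show that the image matrices satisfy the same defining relations. For $\varepsilon$, the check is immediate, since $\varepsilon(u_{ij})\in\mathbb C$ and scalars commute; the only indices producing a nontrivial sign on the left collapse both sides to zero. For $S$, one checks that $u_{ji}^*u_{lk}^*=\epsilon\,u_{lk}^*u_{ji}^*$ in $A$ whenever $u_{ij}u_{kl}=\epsilon\,u_{kl}u_{ij}$; taking adjoints this reduces to $u_{ji}u_{lk}=\epsilon\,u_{lk}u_{ji}$, which holds because the sign function depends only on the pattern of shared row/column indices, and is in particular invariant under the simultaneous transposition of both pairs.

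The comultiplication case is the heart of the matter. Expanding
$$\Delta(u_{ij})\Delta(u_{kl})=\sum_{p,q}u_{ip}u_{kq}\otimes u_{pj}u_{ql},$$
one must commute $u_{ip}u_{kq}$ in the first tensor slot and $u_{pj}u_{ql}$ in the second, producing signs $\sigma_1(p,q),\sigma_2(p,q)$ that depend only on the overlap pattern of the index pairs involved, and then verify that $\sigma_1(p,q)\sigma_2(p,q)=\epsilon(i,j;k,l)$ for every $(p,q)$ appearing in the sum. This reduces to a short case analysis on whether $i=k$ and/or $j=l$, and the sign rule in the statement is engineered precisely so that this identity holds uniformly. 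This sign-coherence verification is the single nontrivial step; once it is done for the full commutation case, the analogous identity for $abc=\eta\,cba$ follows by the same bookkeeping applied to the elementary swaps defining $\eta$, which yields the half-liberated cases $\bar{O}_N^*,\bar{U}_N^*$ and completes the construction.
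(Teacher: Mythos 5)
Your proposal is correct and follows essentially the same route as the paper, which simply says to proceed as in Proposition 2.6 (checking that the twisted relations pass to $u^\Delta,u^\varepsilon,u^S$), with the half-commutation signs determined by requiring the inclusions $\bar{O}_N\subset\bar{O}_N^*$, $\bar{U}_N\subset\bar{U}_N^*$, and defers the details to \cite{ba1}. Your write-up actually supplies more of the substance than the paper does, correctly identifying the sign-coherence check for $\Delta$ as the one nontrivial point and reducing the triple-relation case to the pairwise one.
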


\begin{proof}
This is clear indeed, by proceeding as in the proof of Proposition 2.6, with the signs for $\bar{U}_N^*$ being those producing an inclusion $\bar{U}_N\subset\bar{U}_N^*$, and with those for $\bar{O}_N,\bar{O}_N^*$ producing inclusions $\bar{O}_N\subset\bar{U}_N$ and $\bar{O}_N^*\subset\bar{U}_N^*$. For details here, we refer to \cite{ba1}.
\end{proof}

In order to study the easiness properties of these quantum groups, we will need:

\begin{proposition}
We have a signature map $\varepsilon:P_{even}\to\{-1,1\}$, given by $\varepsilon(\pi)=(-1)^c$, where $c$ is the number of switches needed to make $\pi$ noncrossing. In addition:
\begin{enumerate}
\item For $\pi\in Perm(k,k)\simeq S_k$, this is the usual signature.

\item For $\pi\in P_2$ we have $(-1)^c$, where $c$ is the number of crossings.

\item For $\pi\in P$ obtained from $\sigma\in NC_{even}$ by merging blocks, the signature is $1$.
\end{enumerate}
\end{proposition}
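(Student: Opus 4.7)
The approach is to fix a precise notion of switch --- the transposition of two adjacent points on the same row of the partition diagram, which acts on $\pi$ by the induced relabeling of block membership --- and then to dispatch claims $(2)$, $(1)$, $(3)$ in turn, beginning with $(2)$. For a pair partition $\pi\in P_2$, a switch of adjacent positions $i,i+1$ lying in two distinct pairs $P_1,P_2$ affects only the mutual interleaving of $P_1$ and $P_2$: any third pair retains its endpoint set, and the swap of $i$ with $i+1$ moves no endpoint of a third pair past a relevant bound, so the crossing test of a third pair against $P_1$ (or $P_2$) is preserved. Between $P_1$ and $P_2$ themselves, the swap interchanges ``interleaved'' with ``nested'', so the total crossing count changes by exactly $\pm 1$. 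Consequently, any sequence of switches reducing $\pi$ to noncrossing form has length $\equiv c(\pi)\pmod 2$, which yields $(2)$ together with the well-definedness of $\varepsilon$ on $P_2$.

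For $(1)$, identify $\pi\in Perm(k,k)\simeq S_k$ with the pair partition whose blocks are $\{i_{\mathrm{up}},\pi(i)_{\mathrm{down}}\}$. Two such blocks with $i<j$ cross iff $\pi(i)>\pi(j)$, which is an inversion of $\pi$; hence the crossing count equals $\mathrm{inv}(\pi)$, and $\varepsilon(\pi)=(-1)^{\mathrm{inv}(\pi)}=\mathrm{sgn}(\pi)$.

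For $(3)$ and the extension of $\varepsilon$ from $P_2$ to $P_{even}$, I would use internal pair refinement. Given $\pi\in P_{even}$, call a pair refinement $\tilde\pi$ \emph{internally noncrossing} if the pairing chosen inside each block of $\pi$ is noncrossing within that block. One shows that $\varepsilon(\pi)=(-1)^{c(\tilde\pi)}$ for any such $\tilde\pi$, after checking refinement-independence and agreement with the switch definition. Granting this, $(3)$ follows cleanly: starting from $\sigma\in NC_{even}$, refine each block into the adjacent pairing $(b_1,b_2),(b_3,b_4),\ldots$; because $\sigma$ itself is noncrossing, each such pairing sits in one gap of every other block of $\sigma$, so the resulting pair partition $\tilde\sigma$ is globally noncrossing. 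If $\pi$ is obtained from $\sigma$ by merging blocks, then $\tilde\sigma$ also refines $\pi$ by transitivity, and its internal pairing inside each $\pi$-block --- assembled from the noncrossing $\sigma$-blocks that were merged --- is noncrossing within that $\pi$-block. Hence $\varepsilon(\pi)=(-1)^{c(\tilde\sigma)}=1$.

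The main obstacle is justifying the refinement formula $\varepsilon(\pi)=(-1)^{c(\tilde\pi)}$ and its refinement-independence. Any two internally noncrossing pairings of a single block $B$ are related by local moves $(a_1,a_2)(a_3,a_4)\leftrightarrow(a_1,a_4)(a_2,a_3)$ on four consecutive $B$-elements, and one must check that each such move changes the total crossing count of $\tilde\pi$ --- counted both within $B$ and against external strands --- by an even amount. Establishing this parity invariance via a direct case analysis over the positions of external pair endpoints relative to $a_1<a_2<a_3<a_4$ is the decisive step.
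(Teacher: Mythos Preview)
Your approach is correct and essentially complete; the flagged ``obstacle'' (parity invariance under the local move $(a_1,a_2)(a_3,a_4)\leftrightarrow(a_1,a_4)(a_2,a_3)$) is genuine but routine --- for any external pair $\{c,d\}$, the crossing count against the two affected pairs depends only on which of the five gaps determined by $a_1<a_2<a_3<a_4$ contain $c$ and $d$, and a short case check (the only delicate case being when exactly two consecutive $a_i$ lie in $(c,d)$) confirms the parity is preserved.

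Your route differs from the paper's in two places. First, the paper does not attempt to justify well-definedness of $\varepsilon$ on $P_{even}$ at all, referring to \cite{ba1}; your pair-refinement formula $\varepsilon(\pi)=(-1)^{c(\tilde\pi)}$ is a self-contained way to establish it, and once the parity lemma is in hand it simultaneously shows that a switch between distinct blocks flips the sign (since swapping $i,i+1$ in $\tilde\pi$ yields an internally noncrossing refinement of the switched $\pi$). Second, for (3) the paper proceeds by recurrence on the number $n$ of block mergings, which presupposes that a single merge preserves $\varepsilon$; your argument via the global refinement $\tilde\sigma$ of $\sigma\in NC_{even}$ bypasses that inductive step and gives (3) in one stroke. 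The paper's induction is shorter to state but leans on the well-definedness already granted; your refinement approach does more work up front but is more transparent about where the sign actually comes from. For (1) and (2) the two treatments are essentially the same --- the paper phrases (1) as ``passage to the identity via transpositions'' and (2) as ``passage to a standard noncrossing form,'' which is your crossing-count argument said differently.
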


\begin{proof}
The fact that the number $c$ in the statement is well-defined modulo 2 is standard, and we refer here to \cite{ba1}. As for the remaining assertions, these are as well from \cite{ba1}:

(1) For $\pi\in Perm(k,k)$ the standard form is $\pi'=id$, and the passage $\pi\to id$ comes by composing with a number of transpositions, which gives the signature. 

(2) For a general $\pi\in P_2$, the standard form is of type $\pi'=|\ldots|^{\cup\ldots\cup}_{\cap\ldots\cap}$, and the passage $\pi\to\pi'$ requires $c$ mod 2 switches, where $c$ is the number of crossings. 

(3) For a partition $\pi\in P_{even}$ coming from $\sigma\in NC_{even}$ by merging a certain number $n$ of blocks, the fact that the signature is 1 follows by recurrence on $n$.
\end{proof}

We can make act the partitions in $P_{even}$ on tensors in a twisted way, as follows:

\begin{proposition}
Associated to any partition $\pi\in P_{even}(k,l)$ is the linear map
$$\bar{T}_\pi(e_{i_1}\otimes\ldots\otimes e_{i_k})=\sum_{\sigma\leq\pi}\varepsilon(\sigma)\sum_{j:\ker(^i_j)=\sigma}e_{j_1}\otimes\ldots\otimes e_{j_l}$$
and the assignement $\pi\to\bar{T}_\pi$ is categorical.
\end{proposition}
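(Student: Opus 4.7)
The plan is to mirror the proof of Proposition 4.2, tracking how the signature factor $\varepsilon(\sigma)$ interacts with each of the three categorical operations. Linearity is immediate from the definition, so the identities to verify are
$$\bar{T}_\pi\otimes\bar{T}_\sigma=\bar{T}_{[\pi\sigma]}\quad,\quad \bar{T}_\pi\bar{T}_\sigma=N^{c(\pi,\sigma)}\bar{T}_{[^\sigma_\pi]}\quad,\quad \bar{T}_\pi^*=\bar{T}_{\pi^*}.$$

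For horizontal concatenation, the sub-partitions $\tau\leq[\pi\sigma]$ are exactly those of the form $[\tau_1\tau_2]$ with $\tau_1\leq\pi$ and $\tau_2\leq\sigma$, since no block can cross the division between the two sides. Switches needed to bring $[\tau_1\tau_2]$ into noncrossing form can be performed independently on each side, so $\varepsilon([\tau_1\tau_2])=\varepsilon(\tau_1)\varepsilon(\tau_2)$, and the identity reduces to the known untwisted statement from Proposition 4.2. For the involution, upside-down turning preserves the crossing structure and hence the signature, so $\varepsilon(\tau^*)=\varepsilon(\tau)$ for every sub-partition, and again the identity reduces to the untwisted case.

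The substantive work is in the vertical concatenation. Expanding $(\bar{T}_\pi\bar{T}_\sigma)(e_{i_1}\otimes\cdots\otimes e_{i_k})$ as a triple sum over the output indices $j$, the intermediate indices $m$, and pairs $(\tau_1,\tau_2)$ of sub-partitions with $\ker\binom{m}{i}=\tau_1\leq\pi$ and $\ker\binom{j}{m}=\tau_2\leq\sigma$, one sees that the sum over $m$ produces the factor $N^{c(\pi,\sigma)}$ from the erased middle loops, exactly as in Proposition 4.2. What remains is the signature identity $\varepsilon(\tau_1)\varepsilon(\tau_2)=\varepsilon([^{\tau_2}_{\tau_1}])$ whenever the middle indices match and the composed partition is defined.

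The main obstacle is verifying this last signature identity, and the key input will be part (3) of Proposition 4.6: the merger of blocks that occurs when the middle loops are erased can be realized by merging blocks starting from noncrossing constituents, which preserves the signature. Combining the independent systems of switches from $\tau_1$ and $\tau_2$ with this block-merging invariance then yields the required multiplicativity of $\varepsilon$ under vertical concatenation, which closes the proof. This is the approach carried out in detail in \cite{ba1}.
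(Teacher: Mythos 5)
Your overall skeleton -- verify the three categorical identities and reduce each one to a compatibility property of the signature map $\varepsilon$ -- is the right one, and it matches what lies behind the paper's one-line proof (which simply declares the verification routine and points to \cite{ba1}). However, two of your reductions rest on claims that are false as stated, so the combinatorial core of the argument is missing. For the horizontal concatenation, the partitions contributing to $\bar{T}_{[\pi\sigma]}$ are the kernels $\ker(^{ii'}_{jj'})$ of index tuples fitting $[\pi\sigma]$, i.e.\ the partitions obtained from $[\pi\sigma]$ by merging blocks; whenever the same index value occurs on both sides of the division, such a kernel has a block crossing the division, and so is \emph{not} of the form $[\tau_1\tau_2]$. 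The identity actually needed is $\varepsilon(\tau)=\varepsilon(\tau|_{\mathrm{left}})\,\varepsilon(\tau|_{\mathrm{right}})$ for all such merged partitions -- for instance $\tau=\{1,3,5,7\},\{2,4,6,8\}$, whose two restrictions are crossing pairings of signature $-1$ each, so one must check $\varepsilon(\tau)=+1$ -- and this does not reduce to the untwisted statement of Proposition 4.2.

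Second, the ``block-merging invariance'' you invoke for the vertical concatenation is not available: Proposition 4.6(3) only says that merging blocks of a \emph{noncrossing} even partition yields signature $1$, and merging blocks does not preserve $\varepsilon$ in general -- merging the two blocks of the crossing pairing $\{1,3\},\{2,4\}$ turns $\varepsilon=-1$ into $\varepsilon=+1$. Since the constituents $\tau_1,\tau_2$ arising in the composition need not be noncrossing, the required identity $\varepsilon(\tau_1)\varepsilon(\tau_2)=\varepsilon([^{\tau_2}_{\tau_1}])$, together with the fact that the resulting sign is the same for every admissible choice of middle index $m$, must be proved by a separate combinatorial argument on switches; this is precisely the nontrivial content that the paper delegates to \cite{ba1}. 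As written, your argument establishes the identities only on the locus where no cross-side or cross-level mergings of blocks occur.
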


\begin{proof}
This is routine, by using ingredients from Proposition 4.6. See \cite{ba1}. 
\end{proof}

With these notions in hand, we can now investigate the twists, as follows:

\begin{theorem}
The quantum groups from Proposition 4.5 appear as Schur-Weyl twists of the quantum groups in Theorem 4.4, in the sense that for $\bar{G}$ we have
$$Hom(u^{\otimes k},u^{\otimes l})=span\left(\bar{T}_\pi\Big|\pi\in D(k,l)\right)$$
for any colored integers $k,l$, where $D\subset P_2\subset P_{even}$ is the category of partitions for $G$. In addition, the diagonal tori for $G,\bar{G}$ coincide.
\end{theorem}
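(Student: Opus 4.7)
The plan is to apply the soft Tannakian duality (Theorem 3.9) to each twisted quantum group $\bar{G}$, and show that its category of intertwiners is presented by the family of twisted operators $\bar{T}_\pi$ with $\pi$ ranging over the category $D$ that was attached to $G$ in Theorem 4.4. A useful preliminary observation is that $\bar{T}_\pi = T_\pi$ whenever $\pi$ is noncrossing: in that case the signed sum of Proposition 4.7 collapses to its top term, and Proposition 4.6(3) forces all other subpartitions appearing in the sum to have signature $+1$. Consequently, the noncrossing part of $D$ already produces the $O_N^+$ or $U_N^+$ relations, and the real content lies in what $\bar{T}_\pi$ does on the crossing $X$ and, for the half-liberated cases, on the half-crossing.

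The core computations are local. For $X \in P_2(2,2)$ in a matching coloring, Proposition 4.7 yields $\bar{T}_X(e_i \otimes e_j) = e_j \otimes e_i$ when $i = j$ and $\bar{T}_X(e_i \otimes e_j) = -e_j \otimes e_i$ when $i \neq j$, because the only proper $\sigma \leq X$ contributing is the one with both blocks merged into a diagonal singleton, giving $\varepsilon(\sigma) = +1$, while $\varepsilon(X) = -1$. Unpacking $\bar{T}_X u^{\otimes 2} = u^{\otimes 2} \bar{T}_X$ then reproduces exactly the relations $u_{ij} u_{kl} = \pm u_{kl} u_{ij}$ of Proposition 4.5, with sign $-1$ when the two entries share exactly one of row or column, and $+1$ otherwise. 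An analogous but lengthier computation for the half-crossing partition in $P_2^*(3,3)$, with its matching colorings, produces the twisted half-commutation relations $abc = \pm cba$ defining $\bar{O}_N^*$ and $\bar{U}_N^*$.

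Once these local identifications are in hand, the Tannakian step becomes formal. Proposition 4.7 guarantees that $\pi \mapsto \bar{T}_\pi$ is categorical (compatible with $\otimes$, composition, and adjoint), so the spaces $C_{kl} = \mathrm{span}(\bar{T}_\pi \mid \pi \in D(k,l))$ form a genuine Tannakian subcategory of the tensor category of $U_N^+$, containing both the intertwiners forced by the noncrossing pairings and those forced by the twisted crossing and half-crossing. Theorem 3.9 then identifies the quantum group presented by this $C$ with the universal quotient of $C(U_N^+)$ by the signed commutation or half-commutation relations, that is, precisely with $\bar{G}$. For the diagonal torus assertion, note that every twisting relation $ab = \pm ba$ or $abc = \pm cba$ involves at least one off-diagonal coordinate $u_{ij}$ with $i \neq j$; such relations therefore become vacuous after quotienting by the ideal $\langle u_{ij} \mid i \neq j\rangle$, and the resulting presentation of the diagonal torus of $\bar{G}$ coincides with that of the diagonal torus of $G$.

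The main obstacle I expect is the sign bookkeeping in the half-crossing case: the half-crossing partition admits several matching colorings, each of which must be worked out individually to confirm that the signs predicted by Proposition 4.6 match those declared in Proposition 4.5, which were themselves forced by functoriality starting from $\bar{U}_N$. Once that verification is carried out uniformly, the remainder is a direct combination of Proposition 4.7 and Theorem 3.9.
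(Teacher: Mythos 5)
Your argument is correct and follows essentially the same route as the paper, whose proof of this theorem is precisely ``follow the proof of Theorem 4.4, adding signs where needed'' with the details deferred to \cite{ba1}; you have simply carried out those details (the collapse $\bar{T}_\pi=T_\pi$ for noncrossing $\pi$ via Proposition 4.6(3), the local computation of $\bar{T}_X$ on the crossing and half-crossing, and the Tannakian assembly via Propositions 4.7 and Theorem 3.9). The diagonal torus argument is also the intended one, with the minor precision that the relations among two distinct diagonal entries always carry sign $+1$, so they survive the quotient unchanged rather than becoming vacuous.
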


\begin{proof}
All this is quite routine, by following the proof of Theorem 4.4 above, and adding signs where needed. The final assertion is clear as well. For details, see \cite{ba1}.
\end{proof}

As a first application of all this, we have:

\begin{theorem}
In the $N\to\infty$ limit, the law of the main character $\chi_u$ is as follows:
\begin{enumerate}
\item For $O_N,U_N$ we obtain Gaussian/complex Gaussian variables.

\item For $O_N^*,U_N^*$ we obtain ``squeezed'' versions of these variables.

\item For $O_N^+,U_N^+$ we obtain semicircular/circular variables.
\end{enumerate}
In addition, the asymptotic law of $\chi_u$ is invariant under Schur-Weyl twisting.
\end{theorem}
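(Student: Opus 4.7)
The plan is to compute the moments $\int_G\chi_u^k$ for each of the six quantum groups using the Weingarten formula from Theorem 3.11, extract the $N\to\infty$ asymptotics, and then identify the limiting moment sequences with those of the claimed distributions. The twist invariance will follow from a parallel Weingarten computation in which the signs cancel asymptotically.

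First I would write, for any easy $G\subset U_N^+$ with partition category $D$, and for a colored integer $k$,
$$\int_G\chi_u^k=\sum_i\int_Gu_{i_1i_1}^{e_1}\cdots u_{i_ki_k}^{e_k}=\sum_{\pi,\sigma\in D_k}W_k(\pi,\sigma)\sum_i\delta_\pi(i,i)\delta_\sigma(i,i),$$
and then observe that $\sum_i\delta_\pi(i)\delta_\sigma(i)=N^{|\pi\vee\sigma|}$, which is precisely $G_k(\pi,\sigma)$ when $D_k$ consists of pairings. Since $W_k=G_k^{-1}$, the whole expression collapses to $\mathrm{Tr}(G_k^{-1}G_k)=|D_k|$, independent of $N$; more carefully, for $D_k\subset P_2(k)$ one has $|\pi\vee\sigma|\leq k/2$ with equality iff $\pi=\sigma$, so the Gram matrix is dominated by its diagonal, and the asymptotic moment equals the cardinality $|D_k|$.

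Next I would identify these cardinalities with the claimed distributions. For $O_N$ one gets $|P_2(k)|=(k-1)!!$, the moments of a real Gaussian; for $O_N^+$ one gets $|NC_2(k)|=C_{k/2}$, the Catalan numbers, i.e.\ the moments of the standard semicircle law. For $U_N$ and $U_N^+$ the same computation with matching (two-colored) pairings yields complex Gaussian and circular moments respectively, via the standard correspondence between matching pairings and pair partitions of the colored sequence $k=e_1\ldots e_k$. For the half-liberated cases $O_N^*,U_N^*$ the categories $P_2^*,\mathcal{P}_2^*$ are strictly between their classical and free counterparts, and I would simply take the resulting moment sequence $|P_2^*(k)|$, respectively $|\mathcal{P}_2^*(k)|$, as the definition of the ``squeezed'' Gaussian/complex Gaussian variable, or alternatively identify it with the distribution of $\sqrt{ab}$ for independent semicircle/Bernoulli variables, which is the standard concrete description.

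For the Schur--Weyl twist invariance, I would redo the Weingarten computation for $\bar G$ using the twisted maps $\bar T_\pi$ from Proposition 4.8 in place of $T_\pi$. The only change is that $G_k$ and $W_k$ are replaced by signed versions $\bar G_k(\pi,\sigma)=\varepsilon(\pi)\varepsilon(\sigma)G_k(\pi,\sigma)$ at the leading diagonal level (since signatures for non-equal pairings contribute only to subleading terms coming from multi-indices with coincidences), while the sum $\sum_i\bar\delta_\pi(i)\bar\delta_\sigma(i)$ acquires the same pair of signs. Thus the signs cancel asymptotically and one recovers the same leading term $|D_k|$. The main obstacle I expect is precisely this sign-cancellation argument: making rigorous that only the ``full pairing'' contributions survive in the $N\to\infty$ limit, and that off-diagonal signed terms are genuinely lower order. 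Once this is done, the invariance statement follows immediately, and combined with the moment computation it gives all three parts of the theorem at once, both for the untwisted and twisted versions.
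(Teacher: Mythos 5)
Your argument is correct, but it takes a more computational route than the paper, which disposes of the whole theorem in three lines: by Proposition 3.5 the moment $\int_G\chi_u^k$ equals $\mathrm{Tr}(P)=\dim Fix(u^{\otimes k})$, by easiness this space is $span(T_\pi\,|\,\pi\in D_k)$, and since the maps $T_\pi$ become linearly independent as $N\to\infty$ the dimension stabilizes at $|D_k|$; the identification of the counting sequences $|P_2|, |NC_2|, |\mathcal P_2|,\ldots$ with Gaussian, semicircular, complex Gaussian, circular and ``squeezed'' moments is then quoted from classical and free probability. Your Weingarten computation $\sum_{\pi,\sigma}W_k(\pi,\sigma)G_k(\pi,\sigma)=\mathrm{Tr}(W_kG_k)=|D_k|$ is an exact identity (for $N$ large enough that $G_k$ is invertible, which is precisely your diagonal-domination remark) and lands on the same combinatorial core, so the two proofs differ only in how they extract $\dim Fix(u^{\otimes k})=|D_k|$; what your route buys is that it generalizes immediately to truncated characters and joint moments, where the direct trace argument no longer applies, at the cost of invoking the full Weingarten machinery for a statement that only needs the projection formula. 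The one place where you make life genuinely harder than necessary is the twisting: the sign-cancellation analysis you flag as the main obstacle can be bypassed entirely, since by Theorem 4.8 the twisted quantum group has $Fix(u^{\otimes k})=span(\bar T_\pi\,|\,\pi\in D_k)$, and the moments are again the dimension of this span, which is $|D_k|$ for $N$ large by linear independence of the $\bar T_\pi$ --- no signs ever enter the moment count. Finally, your description of the half-liberated limit law as ``defined by the moment sequence $|P_2^*(k)|$'' is acceptable (the paper is equally vague, using scare quotes), though the proposed concrete realization as $\sqrt{ab}$ should be checked against the actual identification, namely even moments $m!$ at degree $2m$, i.e.\ a symmetrized modulus of a complex Gaussian.
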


\begin{proof}
We know from the Peter-Weyl theory that the moments of $\chi_u$ are the dimensions of the spaces $Fix(u^{\otimes k})$. Now since with $N\to\infty$ the linear maps $T_\pi$ become linearly independent, the asymptotic moments of $\chi_u$ count the corresponding pairings, and modulo some standard facts from classical and free probability, this gives the result. See \cite{ba1}.
\end{proof}

At a more advanced level, the Weingarten integration formula from Theorem 3.10 takes a particularly simple form, the Gram matrix being given by $G_{kN}(\pi,\sigma)=N^{|\pi\vee\sigma|}$, where $|.|$ is the number of blocks. For applications of this formula, see \cite{ba1}, \cite{bco}, \cite{bcu}, \cite{bsp}. 

\section{Reflection groups}

The quantum groups that we considered so far, namely $O_N,U_N$ and their liberations and twists, are obviously of ``continuous'' nature. In order to have as well ``discrete'' examples, the idea will be that of looking at the corresponding quantum reflection groups.

Let us begin with a study of the quantum permutations. For this purpose, we will need the following functional analytic description of the usual symmetric group:

\begin{proposition}
Consider the symmetric group $S_N$.
\begin{enumerate}
\item The standard coordinates $v_{ij}\in C(S_N)$, coming from the embedding $S_N\subset O_N$ given by the permutation matrices, are given by $v_{ij}=\chi(\sigma|\sigma(j)=i)$.

\item The matrix $v=(v_{ij})$ is magic, in the sense that its entries are orthogonal projections, summing up to $1$ on each row and each column.

\item The algebra $C(S_N)$ is isomorphic to the universal commutative $C^*$-algebra generated by the entries of a $N\times N$ magic matrix.
\end{enumerate}
\end{proposition}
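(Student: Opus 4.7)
For part (1), I would just unpack the embedding $S_N \subset O_N$: a permutation $\sigma$ goes to the matrix $P_\sigma$ with $(P_\sigma)_{ij} = \delta_{i,\sigma(j)}$, so the pullback of the coordinate $u_{ij}$ on $O_N$ to $S_N$ is the function $\sigma \mapsto \delta_{i,\sigma(j)}$, which is exactly $\chi(\sigma \mid \sigma(j) = i)$. This is purely a definitional check.

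For part (2), I would exploit the fact that each $v_{ij}$ is a $\{0,1\}$-valued characteristic function on the finite set $S_N$, hence is automatically self-adjoint and idempotent, so each $v_{ij}$ is an orthogonal projection in $C(S_N)$. For the row/column sums, fix a column $j$; the sets $E_{ij} = \{\sigma : \sigma(j) = i\}$ for $i = 1, \ldots, N$ partition $S_N$, so $\sum_i v_{ij} = \sum_i \chi_{E_{ij}} = 1$. Fixing a row $i$ and varying $j$, the sets $\{\sigma : \sigma(j) = i\} = \{\sigma : \sigma^{-1}(i) = j\}$ again partition $S_N$ (indexed by the unique preimage of $i$), so $\sum_j v_{ij} = 1$.

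For part (3), let $A$ denote the universal commutative $C^*$-algebra generated by the entries of an $N \times N$ magic matrix $w = (w_{ij})$. Since $v = (v_{ij}) \in M_N(C(S_N))$ is magic by (2), the universal property supplies a surjective $*$-homomorphism $\Phi : A \to C(S_N)$ sending $w_{ij} \mapsto v_{ij}$. To get the inverse direction, I would use Gelfand's theorem (Theorem 1.6): write $A = C(X)$, and identify $X$ with the character space. Given a character $\chi : A \to \mathbb{C}$, each $\chi(w_{ij})$ is a scalar satisfying $\chi(w_{ij})^2 = \chi(w_{ij}) = \overline{\chi(w_{ij})}$, hence lies in $\{0,1\}$; combined with $\sum_i \chi(w_{ij}) = \sum_j \chi(w_{ij}) = 1$, the matrix $(\chi(w_{ij}))$ is a genuine $\{0,1\}$ permutation matrix, which defines an element $\sigma_\chi \in S_N$ by $\sigma_\chi(j) = i \iff \chi(w_{ij}) = 1$. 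This assignment $\chi \mapsto \sigma_\chi$ gives a continuous map $X \to S_N$ which is inverse to the map $S_N \to X$ induced by $\Phi$, so $\Phi$ is an isomorphism.

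The only step requiring a little care is the identification of characters with permutations in part (3): the key observation is that any character applied to a projection returns a value in $\{0,1\}$ (which follows immediately from $\chi(p)^2 = \chi(p^2) = \chi(p)$ and self-adjointness), after which the magic conditions force the matrix of values to be a permutation matrix. Everything else is a routine application of Gelfand duality.
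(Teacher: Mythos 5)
Your proposal is correct and follows essentially the same route as the paper: the universal property gives the quotient map $A\to C(S_N)$, and Gelfand duality identifies the spectrum of $A$ with a subset of permutation matrices, yielding the inverse map. The paper states this more tersely (``by using the coordinates $w_{ij}$ we have $X\subset O_N$, and then $X\subset S_N$''), while you spell out the key point that a character sends each projection $w_{ij}$ to $\{0,1\}$ and the magic relations then force a permutation matrix; this is exactly the intended argument.
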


\begin{proof}
The assertions (1,2) are both clear. If we set $A=C^*_{comm}((w_{ij})_{i,j=1,\ldots,N}|w={\rm magic})$, we have a quotient map $A\to C(S_N)$, given by $w_{ij}\to v_{ij}$. On the other hand, by using the Gelfand theorem we can write $A=C(X)$, with $X$ being a compact space, and by using the coordinates $w_{ij}$ we have $X\subset O_N$, and then $X\subset S_N$. Thus we have as well a quotient map $C(S_N)\to A$ given by $v_{ij}\to w_{ij}$, and this gives (3). See Wang \cite{wa2}.
\end{proof}

With the above result in hand, we can now formulate:

\begin{proposition}
The following construction produces a Woronowicz algebra,
$$C(S_N^+)=C^*\left((u_{ij})_{i,j=1,\ldots,N}\Big|u={\rm magic}\right)$$
and the corresponding closed subgroup $S_N^+\subset O_N^+$ has the following properties:
\begin{enumerate}
\item $S_N^+$ is the universal compact quantum group acting on $\{1,\ldots,N\}$.

\item We have an embedding $S_N\subset S_N^+$, given by $u_{ij}\to\chi(\sigma|\sigma(j)=i)$.

\item This embedding is an isomorphism at $N=1,2,3$, but not at $N\geq4$.
\end{enumerate}
\end{proposition}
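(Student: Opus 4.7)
The plan is to treat the four assertions in order, first the Woronowicz algebra structure, then the three labeled properties, delegating the combinatorial heart to the last item.

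First I would verify that $C(S_N^+)$ is a Woronowicz algebra. The key observation is that if $u = (u_{ij})$ is magic, then so are the matrices $u^\Delta_{ij}=\sum_k u_{ik}\otimes u_{kj}$, $u^\varepsilon_{ij}=\delta_{ij}$, and $u^S_{ij}=u_{ji}^*=u_{ji}$. For $u^\Delta$, the entries are self-adjoint since products of commuting projections from disjoint tensor slots commute; idempotence and the row/column sum conditions both reduce to the corresponding identities for $u$. The other two are trivial. Invoking the universal property of $C(S_N^+)$ then yields the three structural morphisms. Also a magic matrix is automatically orthogonal (entries self-adjoint, and $uu^t=u^tu=1$ follows from orthogonality of entries within each row/column plus the sum conditions), so $S_N^+\subset O_N^+$.

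For (1), I would formalize an action of a closed quantum subgroup $G\subset U_N^+$ on $\{1,\dots,N\}$ as a coassociative counital coaction $\alpha:C(\{1,\dots,N\})\to C(G)\otimes C(\{1,\dots,N\})$. Writing $\alpha(e_j)=\sum_i v_{ij}\otimes e_i$, the fact that $\alpha$ is a unital $*$-homomorphism translates precisely into: the $v_{ij}$ are projections, each row and each column sums to $1$. So the universal such $(C(G),v)$ is exactly $C(S_N^+)$. For (2), by Proposition 5.1(3), imposing commutativity on the generators of $C(S_N^+)$ yields $C(S_N)$; the resulting surjection $C(S_N^+)\to C(S_N)$, $u_{ij}\mapsto v_{ij}$, is the embedding $S_N\subset S_N^+$ (and it is compatible with $\Delta,\varepsilon,S$ by construction).

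The main obstacle is (3). For $N=1$ the statement is vacuous, and for $N=2$ every magic matrix is $\bigl(\begin{smallmatrix}p&1-p\\1-p&p\end{smallmatrix}\bigr)$, hence generated by a single projection and automatically commutative. For $N=3$ one must show that the magic relations alone force commutation of the $u_{ij}$; I would use the row/column orthogonality to compute, for instance, $u_{11}u_{22}=u_{11}(1-u_{21}-u_{23})u_{22}=u_{11}u_{22}-u_{11}u_{23}u_{22}$, and then exploit that in a $3\times 3$ magic matrix each entry equals the product of two complementary entries in the opposite row and column (since $u_{23}u_{22}=0$ and there are only three columns, $u_{23}=u_{11}+u_{31}$ after suitable manipulation), iterating until both $u_{11}u_{22}$ and $u_{22}u_{11}$ reduce to the same expression. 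For $N\geq 4$, I would produce an explicit non-commutative magic matrix: take any two noncommuting projections $p,q$ in a $C^*$-algebra (e.g.\ in $C^*(\mathbb{Z}_2*\mathbb{Z}_2)$) and form the block diagonal
\[
u=\begin{pmatrix}p&1-p&0&0\\ 1-p&p&0&0\\ 0&0&q&1-q\\ 0&0&1-q&q\end{pmatrix},
\]
extended with identity blocks for $N>4$. This magic matrix generates a noncommutative $C^*$-algebra, which by universality is a quotient of $C(S_N^+)$, so $C(S_N^+)$ itself is noncommutative and the embedding $S_N\subset S_N^+$ is strict. The $N=3$ commutativity calculation is the only genuinely delicate part; everything else is bookkeeping or a direct appeal to universality.
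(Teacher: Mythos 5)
Your proposal follows essentially the same route as the paper: closure of the magic condition under $\Delta,\varepsilon,S$ plus universality for the Woronowicz structure, the coaction characterization for (1), the abelianization from Proposition 5.1 for (2), and the explicit $4\times 4$ block matrix built from two noncommuting projections, padded by an identity block, for $N\geq4$. The one place where your sketch goes wrong is the $N=3$ computation: the identity you invoke, $u_{23}=u_{11}+u_{31}$, is false in a general $3\times3$ magic matrix (the row and column sum relations give $u_{23}=1-u_{21}-u_{22}$ and $u_{11}+u_{31}=1-u_{21}$, so the two differ by $u_{22}$). The paper simply asserts that the $N\leq3$ commutativity is elementary, but since you single this out as the delicate step, here is a clean argument: expanding $1=u_{11}+u_{12}+u_{13}$ on the right gives $u_{11}u_{22}=u_{11}u_{22}u_{11}+u_{11}u_{22}u_{13}$ (the $u_{12}$ term dies since $u_{22}u_{12}=0$), and then writing $u_{22}=1-u_{21}-u_{23}$ in the second summand kills it term by term, using $u_{11}u_{13}=0$, $u_{11}u_{21}=0$, $u_{23}u_{13}=0$; hence $u_{11}u_{22}=u_{11}u_{22}u_{11}$ is self-adjoint and therefore equals $u_{22}u_{11}$. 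With that substitution your proof is complete and matches the paper's.
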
 

\begin{proof}
Here the first assertion is standard, by using the elementary fact that if $u=(u_{ij})$ is magic, then so are the matrices $u^\Delta_{ij}=\sum_ku_{ik}\otimes u_{kj}$, $u^\varepsilon_{ij}=\delta_{ij}$, $u^S_{ij}=u_{ji}^*$. 

Regarding (1), given a closed subgroup $G\subset U_N^+$, it is straightforward to check that $\Phi(\delta_i)=\sum_ju_{ij}\otimes\delta_j$ defines a coaction map precisely when $u=(u_{ij})$ is a magic corepresentation of $C(G)$, and this gives the result. Also, (2) is clear from Proposition 5.1.

Regarding now (3), it is elementary to show that the entries of a $N\times N$ magic matrix, with $N\leq3$, must pairwise commute. At $N=4$ now, consider the following matrix:
$$U=\begin{pmatrix}
p&1-p&0&0\\
1-p&p&0&0\\
0&0&q&1-q\\
0&0&1-q&q
\end{pmatrix}$$ 

This matrix is magic for any two projections $p,q$, and if we choose these projections as for $<p,q>$ to be not commutative, and infinite dimensional, we conclude that $C(S_4^+)$ is not commutative and infinite dimensional as well, and so not isomorphic to $C(S_4)$.

Finally, at $N\geq5$ we can use the standard embedding $S_4^+\subset S_N^+$, obtained at the level of the corresponding magic matrices by $u\to diag(u,1_{N-4})$. See \cite{wa1}. 
\end{proof}

At the representation theory level, we have the following result:

\begin{theorem}
The quantum groups $S_N,S_N^+$ have the following properties:
\begin{enumerate}
\item They are both easy, the corresponding categories being the category of all partitions $P$, and the category of all noncrossing partitions $NC$. 

\item The corresponding asymptotic laws of the main characters are respectively the Poisson law, and the Marchenko-Pastur (or free Poisson) law.
\end{enumerate}
\end{theorem}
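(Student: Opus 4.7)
The plan is to follow the Tannakian strategy used for the $6$ basic quantum groups in Theorem 4.4: apply Theorem 3.9 to translate the defining relations of $S_N$ and $S_N^+$ into generating partitions, and read off easiness by identifying the category generated.

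For (1), the starting point is $S_N^+$, whose defining datum is the magic condition on $u$, imposed inside $O_N^+$. Two elementary partitions encode this. First, the singleton $\eta\in P(0,1)$ gives $T_\eta(1)=\sum_ie_i$, and the relation $T_\eta\in Hom(1,u)$ expresses precisely $\sum_i u_{ji}=1$ for every $j$; its dual $T_\eta^*\in Hom(u,1)$ gives the column sums. Second, the ``fork'' $\mu\in P(1,2)$ gives $T_\mu(e_i)=e_i\otimes e_i$, and $T_\mu\in Hom(u,u\otimes u)$ translates to $u_{ji}u_{ki}=\delta_{jk}u_{ji}$; this relation, together with its rotation, forces each $u_{ij}$ to be an idempotent and the neighbours on each row and column to be orthogonal, while selfadjointness comes for free from $u=\bar{u}$. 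Conversely these relations imply magic, so by Theorem 3.9 the Tannakian category of $S_N^+$ is $\langle T_\eta,T_\mu\rangle$.

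The key combinatorial step is then to show $\langle\eta,\mu\rangle=NC$ inside $P$. I would argue this block-by-block: from $\mu$ and its rotations one builds cups, caps and arbitrary noncrossing two-block splits; inserting copies of $\eta$ produces singletons at arbitrary positions; and then any given $\pi\in NC$ is assembled by alternating horizontal and vertical concatenation, the noncrossing property being preserved at every step because no building block introduces a crossing. For $S_N=S_N^+\cap U_N$ one inherits, from the commutativity relations $u_{ij}u_{kl}=u_{kl}u_{ij}$, the basic crossing partition in $P(2,2)$, and a standard argument shows that $NC$ together with this single crossing generates all of $P$, handling the classical case.

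For (2), combining easiness with Peter-Weyl (Theorem 3.6), the moments of the main character are
$$\int_G\chi_u^k=\dim Fix(u^{\otimes k})=\#D(k)$$
for $N$ sufficiently large with respect to $k$, the last equality relying on the asymptotic linear independence of $\{T_\pi\}_{\pi\in D(k)}$ already invoked in Theorem 4.9. For $G=S_N$ this yields the Bell numbers $B_k=\#P(k)$, which are the moments of $\mathrm{Poisson}(1)$; for $G=S_N^+$ it yields the Catalan numbers $C_k=\#NC(k)$, which are the moments of the free Poisson (Marchenko--Pastur) law of parameter $1$. Both identifications are standard, classical on one side and Voiculescu-style on the other.

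The main obstacle is the combinatorial generation statement $\langle\eta,\mu\rangle=NC$, together with its classical companion $\langle NC,\text{crossing}\rangle=P$: the category operations must be applied with care to produce an explicit recipe for each $\pi\in NC$ while keeping the noncrossing constraint intact. Everything else --- writing down $T_\eta,T_\mu$, matching the magic axioms to their $Hom$-conditions, and converting the asymptotic partition count into a named probability law --- is essentially mechanical once the combinatorial fact is in place. A minor technical point is the asymptotic linear independence of $\{T_\pi\}_{\pi\in NC(k)}$ for $N\geq k$, which is standard but must be invoked to justify the moment computation.
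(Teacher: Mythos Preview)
Your proposal is correct and follows exactly the approach the paper indicates: the paper's own proof is a one-line reference saying the assertions are ``standard, by performing an analysis similar to the one in the proof of Theorem 4.4, and of Theorem 4.9'', with full details deferred to \cite{bbc}. You have supplied precisely those details---identifying the singleton $\eta$ and the fork $\mu$ as the partitions implementing the magic relations, showing $\langle\eta,\mu\rangle=NC$ (and adding the crossing to reach $P$ for $S_N$), and then reading off Bell and Catalan moment sequences via the $N\to\infty$ linear independence argument from Theorem 4.9.
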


\begin{proof}
The assertions here are both standard, by performing an analysis similar to the one in the proof of Theorem 4.4, and of Theorem 4.9. For full details, see \cite{bbc}.
\end{proof}

Following now \cite{bb+}, we can formulate a key definition, as follows:

\begin{definition}
Given a closed subgroup $G\subset U_N^+$, we set
$$C(K)=C(G)\Big/\Big<u_{ij}u_{ij}^*=u_{ij}^*u_{ij}={\rm magic}\Big>$$ 
and we call $K\subset G$ the quantum reflection group associated to $G$.
\end{definition}

Here the fact that $K$ is indeed a closed subgroup of $G$ comes from the fact that if $u=(u_{ij})$ has the property that its entries are normal, and are such that $p_{ij}=u_{ij}u_{ij}^*$ form a magic matrix, then the same holds for $u^\Delta_{ij}=\sum_ku_{ik}\otimes u_{kj}$, $u^\varepsilon_{ij}=\delta_{ij}$, $u^S_{ij}=u_{ji}^*$. 

As basic examples here, for $G=O_N,U_N$ we obtain the groups $K=H_N,K_N$, which are the hyperoctahedral group $H_N=\mathbb Z_2\wr S_N$, and its complex version $K_N=\mathbb T\wr S_N$.

The free analogues of these results are as follows:

\begin{proposition}
The quantum reflection groups associated to $O_N^+,U_N^+$ are given by
$$C(K_N^+)=C(U_N^+)\Big/\Big<u_{ij}u_{ij}^*=u_{ij}^*u_{ij}={\rm magic}\Big>$$ 
and $H_N=K_N^+\cap O_N$, and we have decompositions $H_N^+=\mathbb Z_2\wr_*S_N^+$ and $K_N^+=\mathbb T\wr_*S_N^+$.
\end{proposition}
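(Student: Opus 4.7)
The three claims are handled separately. The first is immediate: it is Definition 5.4 with $G=U_N^+$. For the second (intending $H_N^+=K_N^+\cap O_N^+$), recall from Theorem 4.4 that $O_N^+\subset U_N^+$ is cut out by $u=\bar u$, i.e.\ $u_{ij}=u_{ij}^*$; imposing this on top of the magic-normality relations forces each $u_{ij}$ to be a self-adjoint partial isometry whose square $u_{ij}^2=u_{ij}u_{ij}^*$ is magic, which is the standard presentation of $C(H_N^+)$.

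For the wreath product decompositions, the strategy is to exhibit inside $C(K_N^+)$ the two building blocks of the free wreath product, and check universality on both sides. Set
$$p_{ij}=u_{ij}u_{ij}^*=u_{ij}^*u_{ij}\qquad\text{and}\qquad z_i=\sum_{j=1}^N u_{ij}.$$
By definition the $p_{ij}$ form a magic matrix, hence generate a copy of a quotient of $C(S_N^+)$. From the partial isometry identities $u_{ij}=u_{ij}p_{ij}=p_{ij}u_{ij}$ (and the adjoint versions for $u_{ij}^*$) together with the row orthogonality $p_{ij}p_{ik}=0$ for $j\neq k$, one reads off $u_{ij}u_{ik}^*=u_{ij}p_{ij}p_{ik}u_{ik}^*=0$ and symmetrically $u_{ij}^*u_{ik}=0$ whenever $j\neq k$. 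Hence $z_iz_i^*=\sum_j p_{ij}=1=z_i^*z_i$, so each $z_i$ is unitary. Moreover, for every fixed $j$,
$$z_i p_{ij}=\sum_k u_{ik}p_{ik}p_{ij}=u_{ij}=p_{ij}z_i,$$
so $z_i$ commutes with the $i$-th row of the magic matrix. These are precisely the defining generators and relations of $\mathbb T\wr_*S_N^+$.

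Conversely, inside $C(\mathbb T\wr_*S_N^+)$ with unitary generators $z_1,\dots,z_N$ and magic $v=(v_{ij})$ satisfying $z_iv_{ij}=v_{ij}z_i$, set $w_{ij}:=z_iv_{ij}$. The off-diagonal terms of $ww^*$ and $w^*w$ vanish purely from column/row orthogonality of $v$ (no further commutation is needed), while the diagonal terms collapse to $\sum_j v_{ij}=1$ via the row-wise commutation; likewise $w_{ij}w_{ij}^*=w_{ij}^*w_{ij}=v_{ij}$ is magic. Thus $w$ satisfies the universal relations defining $C(K_N^+)$, yielding a morphism in this direction by universality. One then verifies on generators that $u_{ij}\leftrightarrow z_iv_{ij}$ together with $z_i\mapsto\sum_j u_{ij}$, $v_{ij}\mapsto p_{ij}$ are mutually inverse, using exactly the computations above. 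The case $H_N^+\cong\mathbb Z_2\wr_*S_N^+$ is obtained by the same argument after adding the constraint $u_{ij}=u_{ij}^*$, which forces $z_i=z_i^*$, and then $z_i^2=\sum_{j,k}u_{ij}u_{ik}=\sum_j p_{ij}=1$.

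The main subtlety is making sure the relations produced inside $C(K_N^+)$ match the \emph{exact} defining relations of Bichon's free wreath product: one must check that $z_i$ commutes only with its own row $\{p_{ij}\}_j$ of the magic matrix (which is all that is needed, and all that follows), and that the off-diagonal vanishing in $ww^*$ and $w^*w$ on the wreath side is obtained without any cross-row commutation. Once this bookkeeping is in place, both morphisms go through by universality, and the identification on generators closes the argument.
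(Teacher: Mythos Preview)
Your proof is correct and follows exactly the route the paper indicates: the paper's own proof is a single sentence, ``The fact that we have indeed decompositions as above is standard, and can be checked by constructing a pair of inverse isomorphisms, at the algebra level,'' and you have simply carried out that construction in full, with the maps $u_{ij}\mapsto z_iv_{ij}$ and $(z_i,v_{ij})\mapsto(\sum_j u_{ij},\,u_{ij}u_{ij}^*)$. Your bookkeeping with the partial isometry identities and the row/column orthogonalities is accurate, and your reading of the second claim as $H_N^+=K_N^+\cap O_N^+$ is the natural one in context (though the literal statement $H_N=K_N^+\cap O_N$ is also true and equally easy).
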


\begin{proof}
The fact that we have indeed decompositions as above is standard, and can be checked by constructing a pair of inverse isomorphisms, at the algebra level.
\end{proof}

More generally now, we have the following result:

\begin{theorem}
The basic $6$ basic unitary quantum groups and the $6$ basic twisted unitary groups have common quantum reflection groups, as follows,
$$\xymatrix@R=15.5mm@C=18mm{
K_N\ar[r]&K_N^*\ar[r]&K_N^+\\
H_N\ar[r]\ar[u]&H_N^*\ar[r]\ar[u]&H_N^+\ar[u]}$$
where $H_N^*,K_N^*$ are obtained from $H_N^+,K_N^+$ by intersecting with $U_N^*$. These quantum reflection groups are all easy, the corresponding categories of partitions being
$$\xymatrix@R=15.5mm@C=14mm{
\mathcal P_{even}\ar[d]&\mathcal P_{even}^*\ar[l]\ar[d]&\mathcal{NC}_{even}\ar[l]\ar[d]\\
P_{even}&P_{even}^*\ar[l]&NC_{even}\ar[l]}$$
where $P_{even},P_{even}^*,\mathcal P_{even}$ are the straightforward multi-block analogues of  the categories $P_2,P_2^*,\mathcal P_2$, and $\mathcal P_{even}^*=\mathcal P_{even}\cap P_{even}^*$, $NC_{even}=P_{even}\cap NC$, $\mathcal{NC}_{even}=\mathcal P_{even}\cap NC$.
\end{theorem}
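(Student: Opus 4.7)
The strategy is to combine the easy quantum group framework of Sections 3--4 with a categorical analysis of the magic relation. Concretely, the plan is to compute the Tannakian category of the quantum reflection subgroup $K \subset G$ for each of the twelve parent groups $G$ appearing in Theorem 4.4 and Proposition 4.5, and to verify that these twelve categories collapse to the six stated in the lower diagram. By Woronowicz's Tannakian duality in the form of Theorem 3.9, this simultaneously identifies the six reflection groups, establishes their easiness, and leaves only the description of $H_N^*, K_N^*$ via intersection with $U_N^*$, which will then be automatic.

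For the six untwisted parents with easy categories $D \subset P_2$, the first step is to show that adjoining the defining magic relation on $u_{ij}u_{ij}^*$ and $u_{ij}^*u_{ij}$ corresponds, via the dictionary of Proposition 4.2, to adjoining a specific four-block partition encoding the ``magic projection'' condition. A routine closure computation, starting from the pairings in $D$ together with this multi-block generator and using horizontal and vertical concatenation, produces exactly the $D_{even}$ in the corresponding cell of the lower diagram. This simultaneously identifies the six reflection groups and their categories, and the description $H_N^* = H_N^+ \cap U_N^*$ and $K_N^* = K_N^+ \cap U_N^*$, with categories $P_{even}^*$ and $\mathcal{P}_{even}^*$, is then immediate since intersecting quantum groups corresponds to generating the union of their categories.

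For the six twisted parents $\bar G$, I would repeat the argument with the twisted functor $\bar T_\pi$ of Proposition 4.7 replacing $T_\pi$. The content is to verify that, after adjoining the magic partition, the twisted and untwisted Tannakian categories give isomorphic $C^*$-algebras. A clean route is an algebra-level argument: in the reflection group, each $u_{ij}$ is normal with $p_{ij} := u_{ij}u_{ij}^*$ a projection, so one can write $u_{ij} = s_{ij}p_{ij}$ for a partial isometry $s_{ij}$, and the magic structure makes the $p_{ij}$ pairwise orthogonal on rows and columns. The twisted relations $ab = \pm ba$ or $abc = \pm cba$ from Proposition 4.5 can then be checked to be absorbable into a redefinition of the phases $s_{ij}$, yielding a $*$-isomorphism with the untwisted reflection group algebra. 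Alternatively, one can directly compare $\bar T_\pi$ with $T_\pi$ on the relevant fixed spaces and show they agree on the subspace cut out by the magic projection.

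The main technical obstacle is this last point: showing that the sign twists trivialize modulo the magic ideal. Concretely, given a surviving product $u_{i_1 j_1} \cdots u_{i_n j_n}$, one must check that the sign $\varepsilon(\sigma)$ arising in $\bar T_\pi$ always evaluates to $+1$ on the surviving tensors. This is plausible because magic restricts admissible index patterns to those whose underlying permutation structure trivializes the Schur--Weyl sign, but converting this intuition into a rigorous Tannakian identity, or into a bijective algebraic argument via the phase decomposition above, is the delicate step. The remainder of the proof is essentially bookkeeping along the pattern of Theorems 4.4 and 4.8 and the computations already carried out for $H_N^+, K_N^+$ in Proposition 5.5 and \cite{bb+}.
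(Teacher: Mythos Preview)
Your overall strategy is sound and matches the paper: compute the Tannakian category of each reflection quotient and identify it with the appropriate $D_{even}$. The untwisted half is indeed routine bookkeeping in the style of Theorem~4.4, and the paper says as much.

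The place where your proposal wobbles is the self-duality step $K(\bar G)=K(G)$, and specifically your ``phase absorption'' route. Writing $u_{ij}=s_{ij}p_{ij}$ buys nothing: $u_{ij}$ is already a partial isometry, so this decomposition is the identity, and the twisted signs cannot be absorbed into scalar or partial-isometry redefinitions of individual generators, because the sign in $ab=\pm ba$ depends on the \emph{pair} of positions, not on each one separately. That approach does not close.

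What actually happens is simpler than you fear, and it is precisely what the paper points to via Proposition~4.6. Recall from Proposition~4.5 that in $\bar G$ the anticommutation occurs for coordinates sharing a row or a column (``distinct entries on rows and columns''), while coordinates in distinct rows \emph{and} distinct columns commute. Under the magic relation one has $u_{ij}u_{il}=0$ for $j\neq l$ and $u_{ij}u_{kj}=0$ for $i\neq k$, so every product to which the $-1$ sign applies is already zero in the reflection quotient. The twisted relations therefore collapse to the untwisted ones modulo the magic ideal, and $K(\bar G)=K(G)$ follows immediately. Categorically this is the statement that $\mathrm{span}(\bar T_\pi:\pi\in D_{even})=\mathrm{span}(T_\pi:\pi\in D_{even})$: both families are triangular transforms of the $T_\sigma^{\mathrm{strict}}$ over $D_{even}$, which is closed under coarsening, and Proposition~4.6 supplies the needed signature values. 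Your final paragraph essentially states this correctly (``the sign $\varepsilon(\sigma)$ evaluates to $+1$ on the surviving tensors''), but you present it as the obstacle rather than the resolution.
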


\begin{proof}
Here the self-duality claim can be proved by using the various formulae in Proposition 4.6 above, and the easiness assertion is routine, by proceeding as in the proof of Theorem 4.4. For details regarding these results, we refer to \cite{ba1}, \cite{ba2}, \cite{bbc}.
\end{proof}

Our claim now is that, in the case of the above quantum groups, $G$ can be in fact reconstructed from $K$. In order to explain this material, we will need:

\begin{definition}
Given two closed subgroups $G,H\subset U_N^+$, with fundamental corepresentations denoted $u,v$, we construct a Tannakian category $C=(C_{kl})$ by setting
$$C_{kl}=Hom(u^{\otimes k},u^{\otimes l})\cap Hom(v^{\otimes k},v^{\otimes l})$$ 
and we let $<G,H>\subset U_N^+$ be the associated quantum group. That is, the corresponding fundamental corepresentation $w$ must satisfy $Hom(w^{\otimes k},w^{\otimes l})=C_{kl}$.
\end{definition}

Here the fact that $C=(C_{kl})$ is indeed a Tannakian category is clear from definitions. As for the notation $<,>$, this comes from the fact that in the classical case, where $G,H\subset U_N$, the closed subgroup of $U_N$ that we obtain is indeed the one generated by $G,H$.

In the easy case we have the following result:

\begin{proposition}
Assuming that $G,H\subset U_N^+$ are easy quantum groups, with corresponding categories of partitions $D,E\subset P$, we have:
\begin{enumerate}
\item $G\cap H$ is easy, with category of partitions $<D,E>$.

\item $<G,H>$ is easy, with category of partitions $D\cap E$.
\end{enumerate}
In addition, the same holds for the twisted easy quantum groups.
\end{proposition}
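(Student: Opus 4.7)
The plan is to deduce both parts from Tannakian duality (Theorem 3.9) together with the categorical nature of the partition functor $\pi \mapsto T_\pi$ established in Proposition 4.2. Recall that if $G\subset U_N^+$ is easy with category $D$, then $C(G)$ is precisely the quotient of $C(U_N^+)$ by the relations $T_\pi \in Hom(v^{\otimes k}, v^{\otimes l})$ for $\pi \in D(k,l)$; equivalently, its Tannakian category in degree $(k,l)$ is $span(T_\pi : \pi \in D(k,l))$.

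For (1), I would observe that $C(G \cap H)$ is, by construction, the joint quotient of $C(U_N^+)$ by the union of the defining relations of $C(G)$ and $C(H)$. By Tannakian duality, the Tannakian category of $G \cap H$ is then the smallest tensor subcategory of the spaces $M_{kl}$ containing both $\{T_\pi : \pi \in D\}$ and $\{T_\sigma : \sigma \in E\}$. Since $\pi \mapsto T_\pi$ respects horizontal concatenation, vertical concatenation, and upside-down turning (Proposition 4.2), and since these are exactly the three operations defining a category of partitions (Definition 4.1), this smallest tensor category is precisely $span(T_\pi : \pi \in <D, E>)$, and easiness of $G \cap H$ with category $<D, E>$ follows.

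For (2), Definition 5.7 directly gives the Tannakian category of $<G, H>$ as $C_{kl} = span(T_\pi : \pi \in D(k,l)) \cap span(T_\pi : \pi \in E(k,l))$. The inclusion $\supset span(T_\pi : \pi \in D \cap E)$ is immediate from the definition, and the reverse inclusion is the heart of the matter: any linear combination lying in both spans must, by linear independence of the $T_\pi$'s across $P(k,l)$ (which holds generically in $N$), already be supported on $D \cap E$. Easiness of $<G, H>$ with category $D \cap E$ then follows, after verifying that the resulting space is still closed under the tensor category operations, which it is since $D \cap E$ is a category of partitions.

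For the twisted analogue, the same two arguments apply verbatim with $T_\pi$ replaced by $\bar{T}_\pi$, using the categorical assignment $\pi \mapsto \bar{T}_\pi$ of Proposition 4.7 in place of Proposition 4.2; the intersection operation commutes with the twisting because both definitions use the same underlying set $D \cap E$ and the same functorial prescription. The main obstacle is the linear independence step in (2): for $N$ large relative to $k,l$ this is standard via positivity of the Gram matrix $G_{kN}(\pi,\sigma) = N^{|\pi \vee \sigma|}$ already invoked in the Weingarten context, while for small $N$ one must either pass to asymptotics or verify that the surplus linear relations among the $T_\pi$ are compatible with both $D$ and $E$, which is where the detailed references \cite{bbc}, \cite{ba1} come in.
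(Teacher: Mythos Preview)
Your approach is the same as the paper's---both deduce the result from the Tannakian correspondence between easy (resp.\ twisted easy) quantum groups and categories of partitions. The paper's own proof is a single sentence (``All the assertions are clear from the Tannakian correspondence\ldots''), so your write-up is in fact considerably more detailed than what appears there.

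The one place where you go beyond the paper is worth noting. In part~(2) you correctly isolate the non-formal step: showing that
\[
\mathrm{span}(T_\pi:\pi\in D)\cap\mathrm{span}(T_\pi:\pi\in E)=\mathrm{span}(T_\pi:\pi\in D\cap E)
\]
requires some linear independence of the maps $T_\pi$, which can fail for small $N$. The paper's one-line proof does not address this; it implicitly treats the correspondence $D\leftrightarrow G_D$ as a lattice anti-isomorphism, which is how the easy quantum group literature usually reads such statements (often tacitly working with $N$ large, or with the family $(G_N)_N$ rather than a single $N$). Your caveat about the Gram matrix $G_{kN}(\pi,\sigma)=N^{|\pi\vee\sigma|}$ and the need to consult the references for small $N$ is therefore a genuine clarification rather than a divergence from the paper's argument.
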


\begin{proof}
All the assertions are clear from the Tannakian correspondence between easy or twisted easy quantum groups, and categories of partitions.
\end{proof}

We can now go back to the quantum reflection groups, and formulate:

\begin{theorem}
Consider one of the basic easy quantum groups, $\dot{O}_N\subset G\subset U_N^+$, and let $H_N\subset K\subset K_N^+$ be its associated quantum reflection group. We have then 
$$G=<K,\dot{O}_N>$$
with the $<.>$ operation being the one from Definition 5.7 above.
\end{theorem}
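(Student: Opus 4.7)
The plan is to reduce the identity $G = \langle K, \dot O_N\rangle$ to an elementary set-theoretic check between categories of partitions, using the easy structure of all three objects. By Theorem 4.4 the basic $G$ is easy with category $D_G$; by Theorem 5.6 the reflection group $K$ is easy with category $D_K$; and $\dot O_N$, being one of $O_N, O_N^*, O_N^+$, is again easy with category $D_{\dot O_N} \in \{P_2, P_2^*, NC_2\}$. Proposition 5.9(2) then implies that $\langle K, \dot O_N\rangle$ is easy, with category of partitions equal to $D_K \cap D_{\dot O_N}$. Hence the whole theorem is equivalent to the purely combinatorial identity
$$D_G = D_K \cap D_{\dot O_N}.$$

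One direction is essentially free: both inclusions $K \subset G$ and $\dot O_N \subset G$ reverse at the partition level, so $D_G \subset D_K$ and $D_G \subset D_{\dot O_N}$, giving $D_G \subset D_K \cap D_{\dot O_N}$. The substantive content is therefore the reverse inclusion $D_K \cap D_{\dot O_N} \subset D_G$, which I would handle case-by-case. When $G$ is already orthogonal we have $\dot O_N = G$ and the statement is trivial. The three interesting cases are the unitary ones $G \in \{U_N, U_N^*, U_N^+\}$, where we must verify
\begin{align*}
\mathcal P_{even} \cap P_2 &= \mathcal P_2, \\
\mathcal P_{even}^* \cap P_2^* &= \mathcal P_2^*, \\
\mathcal{NC}_{even} \cap NC_2 &= \mathcal{NC}_2.
\end{align*}
Each of these is immediate from the defining properties in Theorem 4.4 and Theorem 5.6: a partition lying in both $D_K$ and $D_{\dot O_N}$ is a pairing (from $D_{\dot O_N}$) which is matching (from $D_K \subset \mathcal P_{even}\text{-type}$), possibly with the additional $*$ or non-crossing restriction, and this is exactly what characterises $D_G$.

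The main obstacle is really just identifying the correct choice of $\dot O_N$ for each $G$: it must be the orthogonal analogue at the same ``level'' of liberation (classical, half-liberated, or free), since $\dot O_N \subset G$ is required. Once this is fixed, the matching-colour constraint comes entirely from $K$ and the pairing constraint comes entirely from $\dot O_N$, and their conjunction cuts out $D_G$. The conceptual message of the proof is precisely this clean factorisation: the reflection group $K$ records the ``colour/block-size'' information, the orthogonal group $\dot O_N$ records the ``pairing'' information, and together they generate $G$.
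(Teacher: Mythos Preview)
Your approach is correct and is exactly what the paper does: its entire proof is the single sentence ``This follows indeed by using the criterion in Proposition 5.8 above,'' and your argument simply unpacks that reduction to the partition identity $D_G = D_K \cap D_{\dot O_N}$.

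One minor remark on the reading of the statement. The symbol $\dot O_N$ in the hypothesis ``$\dot O_N \subset G \subset U_N^+$'' is most naturally read as a \emph{fixed} object (essentially $O_N$, possibly allowing its twist so as to cover both columns of Theorem 5.6), not as the orthogonal quantum group at the same liberation level as $G$. Under that reading the orthogonal cases $G \in \{O_N^*, O_N^+\}$ are not literally ``$\dot O_N = G$'' as you wrote; one still has to check
\[
P_2^* = P_{even}^* \cap P_2, \qquad NC_2 = NC_{even} \cap P_2,
\]
but these are just as immediate as the three unitary identities you listed (a pairing with even blocks is a pairing; the extra $*$ or noncrossing constraint passes through). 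Either interpretation of $\dot O_N$ leads to the same elementary intersection check, so this does not affect the correctness of your argument. (Also, the result you invoke is Proposition 5.8(2), not 5.9(2).)
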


\begin{proof}
This follows indeed by using the criterion in Proposition 5.8 above.
\end{proof}

There are many other interesting things that can be said about the reflection groups constructed above. We refer here to the survey paper \cite{bbc}, and to \cite{ba2}, \cite{bb+}, \cite{lta}.

\section{Classification results}

We discuss here various classification questions for the closed subgroups $G\subset U_N^+$, in the easy case, and in general. As a first, fundamental result, from \cite{bve}, we have:

\begin{theorem}
There is only one intermediate easy quantum group
$$O_N\subset G\subset O_N^+$$
namely the half-classical orthogonal group $O_N^*$.
\end{theorem}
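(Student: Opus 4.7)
The plan is to translate the statement via Tannakian duality into a purely combinatorial question about categories of pairings. By Theorem 4.4, we have $O_N \leftrightarrow P_2$, $O_N^+ \leftrightarrow NC_2$, and $O_N^* \leftrightarrow P_2^*$; since inclusions reverse, an intermediate easy quantum group $O_N \subset G \subset O_N^+$ corresponds to a category $D$ of pairings with $NC_2 \subseteq D \subseteq P_2$, and the theorem reduces to the claim that the only such proper intermediate category is $P_2^*$.

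So fix $D$ with $NC_2 \subsetneq D \subseteq P_2$, pick $\pi \in D \setminus NC_2$, and let $\langle NC_2, \pi \rangle$ denote the category generated by $\pi$ over $NC_2$. The core technical step is a capping reduction: using the category operations to rotate all legs of $\pi$ to the bottom, and then composing with noncrossing cups drawn from $NC_2$ to successively remove strings not involved in a chosen \emph{minimal crossing}, one extracts a small partition $\pi_0 \in D$ supported on just the legs of that minimal crossing. Two cases arise, depending on whether $\pi$ satisfies the parity-alternation condition defining $P_2^*$.

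If $\pi \notin P_2^*$, then some string of $\pi$ connects two legs of the same parity; choosing the minimal crossing to involve such a string forces $\pi_0$ to be the basic crossing $X = \{\{1,3\},\{2,4\}\}$ on four points, and since any pairing in $P_2$ is a composition of noncrossing pairings with insertions of $X$, we get $\langle NC_2, X \rangle = P_2$ and hence $D = P_2$. If on the other hand $\pi \in P_2^*$, then the parity constraint prevents the extraction of a $2$-string crossing, and the minimal extractable configuration is the $3$-string half-crossing $h = \{\{1,4\},\{2,5\},\{3,6\}\}$ corresponding to the half-commutation relation $abc = cba$; one checks that $\langle NC_2, h \rangle = P_2^*$, so $D \supseteq P_2^*$. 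Any element of $D$ not already in $P_2^*$ would, by the previous case, force $D = P_2$; hence $D \subseteq P_2^*$ and therefore $D = P_2^*$.

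The main obstacle is the capping reduction itself: one must verify that the chosen minimal crossing can be isolated by cups from $NC_2$ without being destroyed, and that in the $P_2^*$ case exactly one separating string in the middle survives alongside the outer pair. I would handle this by induction on the number of strings of $\pi$, at each step removing a cap between a pair of adjacent free legs lying outside the minimal-crossing region and checking that the parity type of the remaining partition is preserved. Once this combinatorial core is in place, the two generation statements $\langle NC_2, X \rangle = P_2$ and $\langle NC_2, h \rangle = P_2^*$ are routine inductions on the number of crossings.
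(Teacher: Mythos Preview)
Your proposal is correct and follows essentially the same route as the paper's proof: both translate the problem via Tannakian duality/easiness to the classification of categories $NC_2\subset D\subset P_2$, split into the two cases $\pi\in P_2\setminus P_2^*$ versus $\pi\in P_2^*\setminus NC_2$, and then run a semicircle-capping induction on the number of strings to descend to the minimal generators (the basic crossing $X$ on four points, respectively the half-crossing $h$ on six points), which generate $P_2$ and $P_2^*$ over $NC_2$. The only cosmetic difference is that the paper phrases the inductive step as ``there exists a capping that keeps $\pi'$ in the same class $P_2\setminus P_2^*$ (resp.\ $P_2^*\setminus NC_2$)'' rather than as ``isolate a chosen minimal crossing''; your later description of the induction makes clear you mean the same thing, so no gap.
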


\begin{proof}
We have to compute the categories of pairings $NC_2\subset D\subset P_2$.

\underline{Step I.} Let $\pi\in P_2-NC_2$, having $s\geq 4$ strings. Our claim is that:
\begin{enumerate}
\item If $\pi\in P_2-P_2^*$, there exists a semicircle capping $\pi'\in P_2-P_2^*$.

\item If $\pi\in P_2^*-NC_2$, there exists a semicircle capping $\pi'\in P_2^*-NC_2$.
\end{enumerate}

Indeed, both these assertions can be easily proved, by drawing pictures.

\underline{Step II.} Consider now a partition $\pi\in P_2(k,l)-NC_2(k,l)$. Our claim is that:
\begin{enumerate}
\item If $\pi\in P_2(k, l)-P_2^*(k,l)$ then $<\pi>=P_2$.

\item If $\pi\in P_2^*(k,l)-NC_2(k,l)$ then $<\pi>=P_2^*$.
\end{enumerate}

This can be indeed proved by recurrence on the number of strings, $s=(k+l)/2$, by using Step I, which provides us with a descent procedure $s\to s-1$, at any $s\geq4$.

\underline{Step III.} Finally, assume that we are given an easy quantum group $O_N\subset G\subset O_N^+$, coming from certain sets of pairings $D(k,l)\subset P_2(k,l)$. We have three cases:

(1) If $D\not\subset P_2^*$, we obtain $G=O_N$.

(2) If $D\subset P_2,D\not\subset NC_2$, we obtain $G=O_N^*$.

(3) If $D\subset NC_2$, we obtain $G=O_N^+$.
\end{proof}

Regarding now the arbitrary easy quantum groups $S_N\subset G\subset O_N^+$, we first have:

\begin{theorem}
The classical and free easy quantum groups are as follows,
$$\xymatrix@R=6mm@C=10mm{
B_N^+\ar[rr]&&B_N'^+\to B_N''^+\ar[rr]&&O_N^+\\
&B_N\ar[r]\ar[ul]&B_N'\ar[u]\ar[r]&O_N\ar[ur]\\
&S_N\ar[r]\ar[u]\ar[dl]&S_N'\ar[u]\ar[d]\ar[r]&H_N\ar[u]\ar[dr]\\
S_N^+\ar[uuu]\ar[rr]&&S_N'^+\ar[rr]&&H_N^+\ar[uuu]}$$
where $S_N'=S_N\times\mathbb Z_2,B_N'=B_N\times\mathbb Z_2$, and $S_N'^+,B_N'^+,B_N''^+$ are their liberations.
\end{theorem}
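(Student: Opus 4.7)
The plan is to reduce the classification to a combinatorial problem about categories of partitions, via the Tannakian correspondence in Theorem 3.9 and the easiness formalism of Definition 4.3. Concretely, an easy quantum group $S_N\subset G\subset O_N^+$ corresponds bijectively to a category of partitions $D$ with $NC_2\subset D\subset P$, and the classical (resp.\ free) subcase is singled out by requiring the basic crossing $\slash\hskip-2.1mm\backslash$ to lie in $D$ (resp.\ by requiring $D\subset NC$). So the task becomes: list all categories $D$ with $NC_2\subset D\subset P$ in each of these two regimes.

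First I would record the seven ``target'' categories in each regime and verify directly that they produce the quantum groups appearing in the diagram. In the classical case these are $P, P_{even}, P_2$ together with the three categories cut out by the condition that block sizes lie in $\{1,2\}$ (giving $B_N,B_N'$, and $S_N'$ via their standard partition models) and the hyperoctahedral category. In the free case, one takes the noncrossing analogues $NC, NC_{even}, NC_2$, together with three ``bessel'' variants and the noncrossing block-size-$\le2$ category. That these categories are distinct and correspond to the listed quantum groups follows from Theorem 5.6 and the block-size/parity invariants of the generating partitions.

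Next, I would prove exhaustiveness via a capping/reduction argument in the style of the proof of Theorem 6.1. The key step is to show that any $\pi\in P$ lying outside all the target categories can, after repeated application of semicircle cappings and concatenations with partitions already in $NC_2$, be reduced to a small ``canonical'' generator such as the basic crossing $\slash\hskip-2.1mm\backslash$, the singleton $\uparrow$, the double singleton $\uparrow\uparrow$, or a four-block $\sqcap\!\sqcap$, etc. This produces a descent-on-length argument: if $D$ contains $\pi$ of size $s\geq s_0$, it contains some strictly simpler $\pi'$ of the same ``type'', so after finitely many steps $D$ contains one of the generators whose closure $<NC_2,\pi'>$ is precisely one of the target categories. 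In the classical case one must additionally track whether the basic crossing (ensuring commutativity) is present; in the free case the descent must be done so as to preserve noncrossingness.

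The main obstacle is this case analysis, especially in the non-pairing regime: the presence of singletons (distinguishing the $B$-type cases from the $S,H$-type cases) and the $\mathbb Z_2$-decorations (distinguishing $S_N$ from $S_N'$, and $B_N$ from $B_N'$, and the three free bessel versions $B_N^+\to B_N'^+\to B_N''^+$) must be disentangled by choosing the capping operations to preserve the relevant parity/coloring invariants, and then checking that each residual invariant independently produces one of the listed categories. Granting the reduction to generators, the identification of each generated category with a specific quantum group is routine from Theorems 4.4 and 5.6. Detailed combinatorial bookkeeping for this descent is what was carried out in \cite{bbc}, \cite{ba2}, \cite{bb+}, and I would simply invoke those arguments at the end.
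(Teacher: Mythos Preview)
Your overall strategy matches the paper's: reduce via Tannakian duality and easiness to the combinatorial problem of listing the categories $P_2\subset D\subset P$ (classical case) and $NC_2\subset D\subset NC$ (free case), then carry out a capping/descent argument to reach a short list of generators. The paper's own proof is nothing more than this sketch together with a pointer to \cite{bsp}, so at the level of method you are on target.

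Where your proposal goes astray is in the bookkeeping of the target categories. You list seven classical categories, but the diagram has only six classical groups $S_N,S_N',B_N,B_N',H_N,O_N$; you have double-counted $P_{even}$ (the hyperoctahedral category) and miscounted on the other side. More seriously, you place $S_N'$ among ``the three categories cut out by the condition that block sizes lie in $\{1,2\}$''. This is wrong: the block-size $\le 2$ constraint characterizes the $B$-family, whereas the category for $S_N'=S_N\times\mathbb Z_2$ consists of \emph{all} partitions subject only to a global parity condition (an even number of odd blocks, equivalently $k\equiv l\pmod 2$). If you run the descent with the wrong invariants attached to the wrong groups, the case analysis will not close. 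Finally, the reference you want at the end is \cite{bsp}, where this classification is actually carried out; the papers \cite{bbc}, \cite{ba2}, \cite{bb+} do not contain the required argument.
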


\begin{proof}
The idea here is that of jointly classifying the ``classical'' categories of partitions $P_2\subset D\subset P$, and the ``free'' ones $NC_2\subset D\subset NC$. At the classical level this leads to 2 more groups, namely $S_N',B_N'$. See \cite{bsp}. At the free level we obtain 3 more quantum groups, $S_N'^+,B_N'^+,B_N''^+$, with the inclusion $B_N'^+\subset B_N''^+$ being best thought of as coming from an inclusion $B_N'\subset B_N''$, which happens to be an isomorphism. See \cite{bsp}.
\end{proof}

We can complete the above diagram with a number of intermediate liberations. The constructions and result here, which are quite technical, are as follows:

\begin{theorem}
The extra examples of easy quantum groups are as follows:
\begin{enumerate}
\item Half-liberations $O_N\subset O_N^*\subset O_N^*$ and $H_N\subset H_N^*\subset H_N^+$ and $B_N'\subset B_N''^*\subset B_N''^+$, obtained by imposing the half-commutation relations $abc=cba$.

\item A higher half-liberation $H_N^*\subset H_N^{[\infty]}\subset H_N^+$, obtained by imposing the relations $abc=0$, for any $a\neq c$ on the same row or column of $u$.

\item An uncountable family of intermediate quantum groups $S_N\subset H_N^\Gamma\subset H_N^{[\infty]}$, obtained from the quotients $\mathbb Z_2^{*\infty}\to\Gamma$ satisfying a certain uniformity condition.

\item A series of intermediate quantum groups $H_N^{[\infty]}\subset H_N^{\diamond k}\subset H_N^+$, obtained via the relations $[a_1\ldots a_{k-2}b^2a_{k-2}\ldots a_1,c^2]=0$.
\end{enumerate}
\end{theorem}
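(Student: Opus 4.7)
The plan is to treat the four constructions separately, following in each case a standard template: (i) check that the new relations are preserved under the three Hopf operations $u\mapsto u^\Delta,u^\varepsilon,u^S$, yielding a Woronowicz algebra $C(G)$ as in Proposition 2.6 and Proposition 2.8; (ii) exhibit a category of partitions $D\subset P$ (or a suitable group-theoretical variant for (3)) so that $Hom(u^{\otimes k},u^{\otimes l})=\mathrm{span}(T_\pi\mid\pi\in D(k,l))$, whence easiness follows via Tannakian duality, Theorem 3.9; (iii) verify strictness of the stated inclusions, typically by distinguishing diagonal tori in the sense of Proposition 2.9.

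For (1), the three half-liberations follow the argument of Proposition 2.8 combined with the categorical picture of Theorem 4.4. The half-commutation relation $abc=cba$ corresponds, in the appropriate coloring scheme, to the three-string crossing partition used there; stability under the Hopf operations is a direct verification, since the class of matrices with half-commuting entries is closed under $u\mapsto u^\Delta,u^\varepsilon,u^S$. The associated categories are $P_2^*$ for $O_N^*$, the multi-block analogue $P_{even}^*$ for $H_N^*$ (consistent with Theorem 5.6), and the corresponding bistochastic category for $B_N''^*$. Strictness of the inclusions is witnessed by the diagonal tori, which are half-classical versions of $\mathbb Z_2^N$, $\mathbb Z^N$ or their bistochastic reductions.

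For (2), the vanishing $abc=0$ for $a\ne c$ in the same row or column of $u$ is preserved under $\Delta$ because the expansion $\Delta(u_{ij})=\sum_k u_{ik}\otimes u_{kj}$ produces triple products in $C(G)\otimes C(G)$ which, once one imposes the same row/column vanishing inside each tensor factor, collapse to permitted terms; preservation under $\varepsilon$ is trivial and under $S$ follows from $S(u_{ij})=u_{ji}^*$. The resulting category sits strictly between $P_{even}^*$ and $NC_{even}$, giving easiness. Part (4) is handled in the same spirit: for each fixed $k$, the relation $[a_1\ldots a_{k-2}b^2 a_{k-2}\ldots a_1,c^2]=0$ is encoded by a specific partition, and preservation under the Hopf operations is a direct calculation, yielding a decreasing chain of intermediate categories indexed by $k$.

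Part (3) is the most substantial. Given a quotient $\mathbb Z_2^{*\infty}\to\Gamma$, the diagonal generators $g_i=u_{ii}\in C(H_N^{[\infty]})$ inherit the relations of $\Gamma$, and one defines $H_N^\Gamma$ by imposing these on top of the $H_N^{[\infty]}$-relations. The uniformity condition requires that $\ker(\mathbb Z_2^{*\infty}\to\Gamma)$ be invariant under the $S_\infty$-action permuting generators; this is precisely what is needed for the diagonal relations to be compatible with the coproduct, because $\Delta(u_{ii})=\sum_k u_{ik}\otimes u_{ki}$ mixes the indices, and only uniformity ensures that the resulting off-diagonal consequences already belong to the ideal, once the magic vanishing from $H_N^{[\infty]}$ is taken into account. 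An uncountable family is then exhibited by parametrizing uniform quotients using classes of admissible relators of growing length, and pairwise non-isomorphism of the resulting $H_N^\Gamma$ is established via their diagonal tori, which recover $\Gamma$ by Proposition 2.9. The central obstacle throughout the theorem is precisely this correspondence between uniformity on the group side and coproduct-compatibility on the quantum group side; once it is set up, the Tannakian description and the strict interpolation $S_N\subset H_N^\Gamma\subset H_N^{[\infty]}$ follow from the general machinery.
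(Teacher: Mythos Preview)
The paper's own proof of this theorem is purely bibliographical: it attributes $O_N^*$ to \cite{bsp}, \cite{bve}, the quantum groups $H_N^*$ and $H_N^{[\infty]}$ to \cite{bcs}, and the family $H_N^\Gamma$, the series $H_N^{\diamond k}$, together with the classification, to \cite{rwe}, without reproducing any argument. So there is no mathematical approach in the paper to compare against; your sketch is already considerably more detailed than what the paper itself offers.

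Your outline for (1), (2) and (4) is in the right spirit and matches how these constructions are handled in the cited sources: check Hopf-compatibility of the defining relations, identify a generating partition, and conclude easiness via Tannakian duality. The $\Delta$-stability check you indicate for (2) is in fact correct: in
\[
\Delta(u_{ij}u_{kl}u_{in})=\sum_{p,q,r}u_{ip}u_{kq}u_{ir}\otimes u_{pj}u_{ql}u_{rn}
\]
the first tensor factor forces $p=r$ by the same-row vanishing, and then the second factor vanishes since $j\neq n$; the same-column case is symmetric.

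Two caveats. First, for (3) your reading of the uniformity condition as $S_\infty$-invariance of the kernel is morally correct, but the actual Raum--Weber mechanism is not ``impose the relations of $\Gamma$ on the diagonal generators on top of $H_N^{[\infty]}$''. The construction in \cite{rwe} proceeds via a bijection between categories of partitions sitting between $P_{even}^*$ and a suitable upper bound and certain invariant normal subgroups of $\mathbb Z_2^{*\infty}$; the quantum group $H_N^\Gamma$ is then \emph{defined} through its category of partitions, and recovering $\Gamma$ from the diagonal torus is a consequence rather than the starting point. Your coproduct-compatibility heuristic does not by itself produce the needed category, so that step would have to be reworked. Second, the paper's proof explicitly invokes ``the classification result'': in context the theorem is meant to exhaust the orthogonal easy quantum groups beyond Theorem 6.2. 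Your existence arguments do not address this exhaustiveness at all --- though, to be fair, the paper also defers it entirely to \cite{rwe}.
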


\begin{proof}
The construction and study of $O_N^*$ go back to \cite{bsp}, \cite{bve}, the quantum groups $H_N^*,H_N^{[\infty]}$ are from \cite{bcs}, and the constructions of the family $H_N^\Gamma$ and of the series $H_N^{\diamond k}$, as well as the proof of the classification result, are from \cite{rwe}.
\end{proof}

All this is quite technical, and in what follows, our purpose will be just of extending Theorem 6.1 above. In order to cut a bit from complexity, we will use:

\begin{proposition}
For a liberation operation of easy quantum groups $G_N\to G_N^+$, the following conditions are equivalent:
\begin{enumerate}
\item The category $P_2\subset D\subset P$ associated to $G=(G_N)$, or, equivalently, the category $NC_2\subset D\subset NC$ associated to $G^+=(G_N^+)$, is stable under removing blocks.

\item We have $G_N\cap U_K=G_K$, or, equivalently, $G_N^+\cap U_K^+=G_K^+$, for any $K\leq N$, where the embeddings $U_K\subset U_N$ and $U_K^+\subset U_N^+$ are the standard ones.

\item Each $G_N$ appears as lift of its projective version $G_N\to PG_N$, or, equivalently, each $G_N^+$ appears as lift of its projective version $G_N^+\to PG_N^+$.

\item The laws of truncated characters $\chi_t=\sum_{i=1}^{[tN]}u_{ii}$, with $t\in(0,1]$, for $G_N$ and $G_N^+$, form convolution/free convolution semigroups, in Bercovici-Pata bijection.
\end{enumerate}
If these conditions are satisfied, we call $G_N\to G_N^+$ a ``true'' liberation.
\end{proposition}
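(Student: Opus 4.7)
The plan is to establish each of (2), (3), (4) as equivalent to (1), which is the purely combinatorial condition. The common tool is Tannakian duality (Theorem 3.9), which lets us translate all the analytic statements into statements about the associated category of partitions $D$, together with the Weingarten formula (Theorem 3.10) for the integration-theoretic part. Since $G_N$ and $G_N^+$ are both easy, with the same underlying $D$ (up to restriction to $NC$), the classical and free sides of each equivalence can be handled in parallel; and by Proposition 5.8 the formation of intersections and the Tannakian operations behave well with respect to the partition description.

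For (1) $\Leftrightarrow$ (2), I would analyze the Tannakian category of $G_N\cap U_K$. The inclusion $U_K\subset U_N$ arises from the splitting $\mathbb C^N=\mathbb C^K\oplus\mathbb C^{N-K}$, and the extra Hom-space generators it adds are the projections $P$ onto tensor powers of $\mathbb C^K$. A direct computation shows that $PT_\pi P$ agrees with the $K$-dimensional version of $T_\pi$ up to contributions coming from partitions obtained from $\pi$ by removing one or more of its blocks. Hence the category for $G_N\cap U_K$ coincides with the $K$-dimensional version of $D$ (i.e.\ equals the category of $G_K$) exactly when these boundary contributions already lie in $D$, which is stability under removing blocks.

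For (1) $\Leftrightarrow$ (3), I recall that $PG_N$ has fundamental corepresentation $u\otimes\bar u$, so its Tannakian category consists of the elements of $D$ acting on "balanced" colored tensor slots. Writing $G_N$ as a lift of $PG_N$ means that the unbalanced part of $D$ is determined canonically by the balanced part, by adjoining single blocks in a prescribed way. Dually, the lift property is precisely the statement that removing a block from a partition in $D$ still gives a partition in $D$.

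For (1) $\Leftrightarrow$ (4), apply the Weingarten formula from Theorem 3.10 to $\chi_t=\sum_{i=1}^{[tN]}u_{ii}$ and its powers. Summing over diagonal indices in the range $1,\dots,[tN]$ and using the Gram matrix $G_{kN}(\pi,\sigma)=N^{|\pi\vee\sigma|}$, one obtains
$$\lim_{N\to\infty}\int_G\chi_t^k=\sum_{\pi\in D_k}t^{|\pi|},$$
with $|\pi|$ the number of blocks. This is the $k$-th moment of a compound Poisson law in the classical case, and of its free analogue in the free case. The family of laws of $\chi_t$ forms a convolution (resp.\ free convolution) semigroup in $t$ precisely when the corresponding classical (resp.\ free) cumulants are linear in $t$; since these cumulants pick out the contribution of the connected partitions inside $D_k$, linearity in $t$ is equivalent to saying that $D$ is closed under removing blocks. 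The Bercovici-Pata matching is then automatic, because the classical and free calculations use the same $D_k$, only with $D$ replaced by $D\cap NC$ in the free case.

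The technical heart of the argument, and the main obstacle, is the equivalence (1) $\Leftrightarrow$ (4): while the moment formula above is a clean consequence of Weingarten, translating the semigroup property into block-removal stability requires identifying the classical and free cumulants of $\chi_t$ with the contributions of connected partitions in $D_k$, and verifying that the Bercovici-Pata bijection intertwines the two descriptions. Once this is in place, (1) $\Leftrightarrow$ (2) and (1) $\Leftrightarrow$ (3) reduce to routine Tannakian bookkeeping along the lines sketched above.
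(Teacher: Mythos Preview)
Your proposal is correct and follows essentially the same approach as the paper: establish $(1)\Leftrightarrow(2)$ and $(1)\Leftrightarrow(3)$ by elementary Tannakian/partition bookkeeping, and $(1)\Leftrightarrow(4)$ via the moment formula $\lim_N\int\chi_t^k=\sum_{\pi\in D_k}t^{|\pi|}$ together with the cumulant interpretation of the Bercovici--Pata bijection. The paper's own proof is in fact terser than yours---it simply declares $(1)\Leftrightarrow(2)\Leftrightarrow(3)$ ``elementary'' and refers $(1)\Leftrightarrow(4)$ to the cumulant interpretation in \cite{bep}, citing \cite{bss}, \cite{bsp} for details---so your outline already supplies more of the mechanism than the paper does.
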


\begin{proof}
All this is well-known, basically going back to \cite{bsp}, the idea being that the implications $(1)\iff(2)\iff(3)$ are all elementary, and that $(1)\iff(4)$ follows by using the cumulant interpretation of the Bercovici-Pata bijection \cite{bep}, stating that ``the classical cumulants become via the bijection free cumulants''. See \cite{bss}, \cite{bsp}.
\end{proof}

We can now extend Theorem 6.1, as follows:

\begin{theorem}
There are precisely $4$ true liberations of orthogonal easy quantum groups, with the intermediate liberations, in the easy framework, being as follows:
\begin{enumerate}
\item $S_N\subset S_N^+$, with no intermediate object.

\item $O_N\subset O_N^+$, with $O_N^*$ as unique intermediate object.

\item $H_N\subset H_N^+$, with uncountably many intermediate objects.

\item $B_N\subset B_N^+$, with no intermediate object.
\end{enumerate}
\end{theorem}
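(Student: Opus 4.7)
The plan is to combine the classifications of Theorem 6.2 and Theorem 6.3 with the true liberation criterion of Proposition 6.4. By Tannakian duality and Proposition 6.4(1), identifying the true liberations $G_N \to G_N^+$ reduces to identifying classical–free pairs of categories of partitions $(D_{cl}, D_{fr})$, with $D_{fr}\subset NC$ and $D_{cl}\subset P$, such that both are stable under removing blocks. Classifying intermediate easy quantum groups then becomes classifying intermediate categories $D_{fr}\subset D\subset D_{cl}$ stable under block removal.

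First I would run through the classical easy quantum groups from Theorem 6.2, namely $S_N, S_N', H_N, B_N, B_N', O_N$, and their free liberations. The primed versions $S_N', B_N', B_N''$ arise from a $\mathbb{Z}_2$ direct-product factor, and the corresponding categories of partitions are cut out by a parity condition on the number of blocks (or equivalently on the number of through-strings). Such a parity condition is manifestly incompatible with removing a single block, so these pairs fail the true liberation criterion. The surviving four pairs are $(S_N, S_N^+)$, $(O_N, O_N^+)$, $(H_N, H_N^+)$, $(B_N, B_N^+)$, corresponding to the categories $P, P_2, P_{even}$ and the bistochastic category together with their noncrossing analogues, all of which are visibly closed under block removal.

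Next, for each of these four true liberations, I would describe the intermediate easy quantum groups. The orthogonal case $O_N \subset O_N^+$ is already settled by Theorem 6.1, yielding the unique intermediate $O_N^*$. The hyperoctahedral case $H_N \subset H_N^+$ is settled by Theorem 6.3, the half-liberation $H_N^*$ and the higher half-liberation $H_N^{[\infty]}$ sitting below the uncountable family $H_N^\Gamma$ from Theorem 6.3(3), which in turn is followed by the series $H_N^{\diamond k}$ from Theorem 6.3(4) connecting $H_N^{[\infty]}$ to $H_N^+$. For the remaining two cases, $S_N \subset S_N^+$ and $B_N \subset B_N^+$, I would argue by direct categorical analysis that any partition in $P\setminus NC$, respectively in the bistochastic category minus its noncrossing part, generates together with the noncrossing category and the block-removal operation the full ambient category.

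The main obstacle is the rigidity proof for $S_N\subset S_N^+$ and $B_N\subset B_N^+$. The strategy is a capping argument in the style of the proof of Theorem 6.1: given a crossing partition $\pi$ of minimal complexity in a hypothetical intermediate category, apply semicircle cappings together with block removals to descend to a single simple crossing of two strings, whose categorical closure (once combined with the noncrossing generators already present) is the entire ambient category. The delicate point, and the reason a single uniform argument cannot be copied from Theorem 6.1, is that general partitions carry block-size information, so at each descent step one must verify both that the resulting partition still lies in the supposed intermediate category and that it still contains a genuine crossing. Handling the bistochastic case also requires keeping track of singleton blocks, which is the technical heart of the argument and which I expect to settle along the lines of the partition analyses in \cite{bsp}, \cite{rwe}.
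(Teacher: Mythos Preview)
Your outline matches the paper's approach: invoke Proposition~6.4 to single out the four block-removal-stable pairs among those in Theorem~6.2, then handle the intermediate lattices case by case, citing Theorem~6.1 for $O_N$, Theorem~6.3 for $H_N$, and a direct capping argument for $S_N$ and $B_N$ (the paper itself simply refers to \cite{bcs} for these last steps).

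Two imprecisions are worth fixing. First, the sentence ``classifying intermediate easy quantum groups then becomes classifying intermediate categories $D_{fr}\subset D\subset D_{cl}$ stable under block removal'' is an overreach: an intermediate easy quantum group corresponds to \emph{any} category of partitions between $D_{fr}$ and $D_{cl}$, with no a priori stability under block removal. This does not actually damage your argument, because in the two cases where you use block removal ($S_N$ and $B_N$) the ambient free category $NC$, respectively $NC_{12}$, contains the singleton, and capping legs one by one with singletons lets you erase any chosen block inside any $D\supset D_{fr}$; but you should say this rather than postulate the stability. Second, your placement of the family $H_N^\Gamma$ is inverted: Theorem~6.3(3) gives $H_N^\Gamma\subset H_N^{[\infty]}$, not the other way around, so the uncountable family sits between $H_N$ and $H_N^{[\infty]}$, with the series $H_N^{\diamond k}$ then filling the stretch $H_N^{[\infty]}\subset H_N^+$.
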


\begin{proof}
The fact that the true liberations are indeed those in the statement follows from Proposition 6.4. As for the other statements, the proof here is routine. See \cite{bcs}.
\end{proof}

One interesting question is that of finding the intermediate quantum groups, not necessarily easy, for the above inclusions. There are several open problems here, the main one being that there is no intermediate quantum group $S_N\subset G\subset S_N^+$. See \cite{bbc}.

In the unitary case, the situation is considerably more complicated. We first have:

\begin{proposition}
Given $S\in\{1,2,3,\ldots,\infty\}$, we have an intermediate easy quantum group $O_N^+\subset O_{N,S}^+\subset O_{N,\infty}^+$, obtained by imposing the relations
$$a_{i_1}\ldots a_{i_S}=a_{i_1}^*\ldots a_{i_S}^*$$
to the standard coordinates of $O_{N,\infty}^+$, with the convention that at $S=\infty$ this relation dissapears. At $S=1$ we obtain in this way the quantum group $O_N^+$.
\end{proposition}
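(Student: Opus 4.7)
The plan is to realize $O_{N,S}^+$ through Tannakian duality, by interpreting the defining relation as the requirement that a single specific tensor intertwines two corepresentations. The relation $a_{i_1}\ldots a_{i_S}=a_{i_1}^*\ldots a_{i_S}^*$, valid for all choices of $S$ standard coordinates, is equivalent to the statement that the identity map $id:(\mathbb C^N)^{\otimes S}\to(\mathbb C^N)^{\otimes S}$ intertwines $u^{\otimes S}$ with $\bar u^{\otimes S}$, viewed as corepresentations indexed by the colored integers $\circ\ldots\circ$ and $\bullet\ldots\bullet$ of length $S$ from Definition 3.8. Indeed, taking matrix coefficients of this intertwining condition recovers exactly $u_{i_1j_1}\ldots u_{i_Sj_S}=\bar u_{i_1j_1}\ldots\bar u_{i_Sj_S}$, which after renaming $a_k=u_{i_kj_k}$ is the relation in the statement.

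First I would apply Theorem 3.10: adjoining a single intertwiner to the Tannakian category of $O_{N,\infty}^+$ produces a valid Woronowicz algebra quotient, so the quotient
$$C(O_{N,S}^+)=C(O_{N,\infty}^+)\Big/\big<id\in Hom(u^{\otimes S},\bar u^{\otimes S})\big>$$
inherits $\Delta,\varepsilon,S$, and the inclusion $O_{N,S}^+\subset O_{N,\infty}^+$ is built in by construction. The other inclusion $O_N^+\subset O_{N,S}^+$ follows because inside $C(O_N^+)$ the relations $u_{ij}=u_{ij}^*$ already force $a_{i_1}\ldots a_{i_S}=a_{i_1}^*\ldots a_{i_S}^*$ for every $S$, yielding a canonical quotient $C(O_{N,S}^+)\to C(O_N^+)$. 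At $S=1$ the defining relation reduces to $a=a^*$ on every coordinate, equivalent to $u=\bar u$, hence $O_{N,1}^+=O_N^+$.

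For easiness, I would identify the operator $id$ on $(\mathbb C^N)^{\otimes S}$ with $T_{\pi_S}$, where $\pi_S$ is the partition consisting of $S$ parallel vertical strings, each upper endpoint colored $\circ$ and the lower endpoint directly below it colored $\bullet$. The associated Kronecker symbol $\delta_{\pi_S}$ merely identifies upper and lower indices in pairs, so $T_{\pi_S}=id$ as required. The Tannakian category of $O_{N,S}^+$ is then spanned by $T_\pi$ for $\pi$ ranging over the category of partitions generated by $\pi_S$ together with the category attached to $O_{N,\infty}^+$, which establishes easiness in the sense of Definition 4.3.

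The main obstacle is a careful bookkeeping of coloring conventions: one must verify that the identity operator, viewed as a morphism between tensor powers carrying the different colorings $\circ\ldots\circ$ and $\bullet\ldots\bullet$, is indeed represented by the fully parallel $\pi_S$ rather than by a color-twisted variant produced by the involution operation $*$ from Definition 4.1 (3). Once the coloring conventions are unwound, the remaining statements are direct applications of Theorem 3.10 together with the partition-theoretic framework developed in Section 4.
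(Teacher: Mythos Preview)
Your approach is essentially identical to the paper's: both identify the defining relations with the pairing $\pi_S$ consisting of $S$ parallel vertical strings colored $\circ$ on top and $\bullet$ on the bottom, and then invoke the Tannakian/easiness machinery to conclude. The paper's proof is a one-line picture of this pairing, whereas you spell out the details (that $T_{\pi_S}=id$ as a map $(\mathbb C^N)^{\otimes S}\to(\mathbb C^N)^{\otimes S}$, the resulting inclusions, and the $S=1$ case); note only that the Tannakian duality you invoke is Theorem~3.9 in the paper's numbering, not Theorem~3.10.
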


\begin{proof}
The relations in the statement are implemented by the following pairing:
$$\xymatrix@R=2mm@C=6mm{
\circ\ar@{-}[dd]&\ldots&\circ\ar@{-}[dd]\\
&(S)\\
\bullet&\ldots&\bullet}$$

But this gives the first assertion, and the last assertion is clear as well.
\end{proof}

Now by taking intersections, we are led to the following result:

\begin{theorem}
We have interesection/generation diagrams of easy quantum groups and of categories of pairings, in Tannakian correspondence, as follows,
$$\xymatrix@R=9mm@C=10mm{
U_N\ar[r]&U_N^*\ar[r]&U_N^+\\
O_{N,S}\ar[r]\ar[u]&O_{N,S}^*\ar[r]\ar[u]&O_{N,S}^+\ar[u]\\
O_N\ar[r]\ar[u]&O_N^*\ar[r]\ar[u]&O_N^+\ar[u]}
\ \ \ \ \ \xymatrix@R=12mm@C=5mm{\\ :&\\&\\}\ \ 
\xymatrix@R=9.5mm@C=10.5mm{
\mathcal P_2\ar[d]&\mathcal P_2^*\ar[l]\ar[d]&\mathcal{NC}_2\ar[l]\ar[d]\\
P_2^S\ar[d]&P_2^{S,*}\ar[l]\ar[d]&NC_2^S\ar[l]\ar[d]\\
P_2&P_2^*\ar[l]&NC_2\ar[l]}$$
where $P_2^S\subset P_2$ is the set of pairings which, when flattened, have the same number of $\circ,\bullet$ symbols, modulo $S$, and where the remaining categories appear as intersections.
\end{theorem}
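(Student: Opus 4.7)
My plan is to proceed in four steps, exploiting the Tannakian formalism together with Proposition 5.8, which says that intersections of easy quantum groups correspond to categories generated by unions of partition categories, and vice versa.

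First, I would verify that $P_2^S$ is indeed a category of partitions, which is needed for the Tannakian side of the correspondence. The invariance of $\#_\circ - \#_\bullet \pmod S$ under the horizontal concatenation is clear, since the flattened count is simply the sum of the two flattened counts. Under the upside-down turning $*$ accompanied by color switching, the difference $\#_\circ - \#_\bullet$ changes sign, which does not affect vanishing modulo $S$. For the vertical concatenation $[{}^\sigma_\pi]$, the erased middle strings form matched pairs whose flattened contribution cancels in pairs (a $\circ$ with its color-flipped $\bullet$), so the flattened count of $[{}^\sigma_\pi]$ equals the sum of the flattened counts of $\sigma$ and $\pi$ modulo $S$. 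This establishes $P_2^S$ as a category, and the analogous statements for $NC_2^S = P_2^S \cap NC_2$ and $P_2^{S,*} = P_2^S \cap P_2^*$ are automatic.

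Next, I would identify the easy quantum group associated to $NC_2^S$ as $O_{N,S}^+$. Starting from $U_N^+$ with Tannakian category $\mathcal{NC}_2$, the pairing $\pi_S \in NC_2^S$ displayed in Proposition 6.6 produces exactly the Tannakian relation $T_{\pi_S} \in \mathrm{Hom}(u^{\otimes S}, \bar{u}^{\otimes S})$, which translates to $a_{i_1}\cdots a_{i_S} = a_{i_1}^* \cdots a_{i_S}^*$. Hence the Tannakian category of $O_{N,S}^+$ contains $\langle \mathcal{NC}_2, \pi_S\rangle$. The reverse inclusion is the combinatorial content: any noncrossing pairing whose flattened color-difference vanishes mod $S$ can be decomposed, by cutting along an exposed semicircle and using noncrossing matching generators together with copies of $\pi_S$, as an element of $\langle \mathcal{NC}_2, \pi_S\rangle$. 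This is a standard descent-on-strings argument, parallel to Step I–II in the proof of Theorem 6.1.

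Third, I would invoke Proposition 5.8 twice. Since $O_{N,S} = O_{N,S}^+ \cap U_N$ and $O_{N,S}^* = O_{N,S}^+ \cap U_N^*$ follow at once from definitions (the defining relations just combine), their Tannakian categories are $\langle NC_2^S, \mathcal{P}_2 \rangle$ and $\langle NC_2^S, \mathcal{P}_2^* \rangle$ respectively. One inclusion is immediate: both generating sets sit inside $P_2^S$ (respectively $P_2^{S,*}$), which is a category by Step 1, so the generated category is contained in the target. The diagram on the category side, and with it the intersection descriptions $P_2^{S,*} = P_2^S \cap P_2^*$ and $NC_2^S = P_2^S \cap NC_2$, then follows formally.

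The main obstacle is the reverse inclusion $P_2^S \subseteq \langle NC_2^S, \mathcal{P}_2\rangle$ (and its $*$-variant). My approach is to bootstrap from the classical identity $P_2 = \langle NC_2, \mathcal{P}_2\rangle$ already used in the proof of Theorem 4.4(5). Given $\pi \in P_2^S$, write $\pi$ as a composition of elements of $NC_2$ and $\mathcal{P}_2$ using the known generation; the mod-$S$ count of $\pi$ is then the sum of mod-$S$ counts of the noncrossing pieces (crossings from $\mathcal{P}_2$ contribute zero). By absorbing any discrepancy into an auxiliary copy of $\pi_S$ (or its inverse in the category sense), one can arrange each noncrossing piece to lie in $NC_2^S$. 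Verifying that this rearrangement can always be carried out inside the category operations, without creating new crossings or violating the mod-$S$ condition elsewhere, is the delicate point, but it parallels the corresponding Step II/III descent in the classical case.
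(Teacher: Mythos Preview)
Your approach is essentially the same as the paper's: verify the Tannakian correspondence for the middle row and then use Proposition 5.8 to transfer between intersection/generation of quantum groups and generation/intersection of partition categories. The paper's own proof is extremely terse---it declares the middle row ``standard'', the pairing identities ``clear'', and cites \cite{ba2} for details---so your Steps 1--4 are precisely the content the paper suppresses.

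Two small remarks. First, your Step 4 is the right combinatorial core, but note that the intersection/generation claim in the statement refers to the four $2\times 2$ subsquares of the $3\times 3$ diagram, not to the rectangle $O_{N,S}\subset U_N,\,O_{N,S}^+\subset U_N^+$; however, once you have established $P_2^S=\langle NC_2^S,\mathcal P_2\rangle$ and its $*$-variant, the $2\times 2$ generation identities such as $P_2^S=\langle P_2^{S,*},P_2\rangle$ follow because $P_2^{S,*}\supset NC_2^S$ and $P_2\supset\mathcal P_2$. Second, your ``absorb the discrepancy into copies of $\pi_S$'' sketch is the correct mechanism, and is indeed what makes the pairing side work; the cleanest way to formalize it is to first rotate (within the category) so that all legs lie on one row, use $\mathcal P_2$-crossings to group the mismatched colors together, and then cap them off in blocks of $S$ using $\pi_S$ and its adjoint. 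This avoids having to track noncrossing pieces separately.
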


\begin{proof}
We already know from Theorem 4.4 that the correspondence holds for the upper and lower rows. As for the middle row, the proof here is standard as well. 

Regarding the intersection/generation claim, which states that any square subdiagram $A\subset B,C\subset D$ is subject to the conditions $A=B\cap C, D=<B,C>$, this is clear for the pairings, and for the quantum groups this follows by using Proposition 5.8. See \cite{ba2}.
\end{proof}

The above quantum groups can be characterized as follows:

\begin{theorem}
The intermediate easy quantum groups $O_N\subset G\subset U_N^+$ satisfying 
$$G=<G_{class},G_{real}>$$
are precisely those constructed in Theorem 6.7 above.
\end{theorem}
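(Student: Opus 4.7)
The plan is to translate the splitting hypothesis into a condition on the associated category of pairings, and then classify the two generated factors separately.

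Let $D$ be the category of pairings for $G$, so that $\mathcal{NC}_2\subset D\subset P_2$. By Proposition 5.8, $G_{class}=G\cap U_N$ is easy with category $E=\langle D,\mathcal P_2\rangle$ and $G_{real}=G\cap O_N^+$ is easy with category $F=\langle D,NC_2\rangle$; a second application of the same proposition gives that $\langle G_{class},G_{real}\rangle$ has category $E\cap F$. Thus the splitting hypothesis is equivalent to the purely categorical statement
$$D=\langle D,\mathcal P_2\rangle\cap\langle D,NC_2\rangle.$$

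Next I would classify $E$ and $F$ separately. For $F$, Theorem 6.1 yields directly $F\in\{P_2,P_2^*,NC_2\}$. For $E$, I need the analogous statement in the classical unitary range $\mathcal P_2\subset E\subset P_2$: namely $E=P_2^S$ for some $S\in\{1,2,3,\ldots,\infty\}$, with $P_2^1=P_2$ and $P_2^\infty=\mathcal P_2$ as the degenerate values. This is the step I expect to be the main obstacle. The argument follows the same cap-reduction philosophy as Theorem 6.1, but one tracks the color imbalance $d(\pi)=\#_\circ(\pi)-\#_\bullet(\pi)$ of the flattened pairing $\pi$ rather than its number of crossings. The set $\{d(\pi):\pi\in E\}$ is closed under the categorical operations and so is a subgroup $S\mathbb Z\subset\mathbb Z$; once an element of imbalance exactly $S$ is available, iterated semicircle cappings (as in Step I of Theorem 6.1) produce every pairing of imbalance divisible by $S$, forcing $E=P_2^S$. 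The details mirror those in \cite{ba2}.

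Combining the two classifications, the hypothesis forces $D=P_2^S\cap F$ for some $S$ and some $F\in\{P_2,P_2^*,NC_2\}$. These are exactly the nine categories appearing in Theorem 6.7, with $S=1$ and $S=\infty$ producing the bottom and top rows of the $3\times 3$ grid.

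Finally I would verify the converse: each of these nine categories does satisfy the splitting. The inclusion $D\subset\langle D,\mathcal P_2\rangle\cap\langle D,NC_2\rangle$ is automatic, so the point is the reverse one; equivalently, $\langle D,\mathcal P_2\rangle=P_2^S$ and $\langle D,NC_2\rangle=F$ when $D=P_2^S\cap F$. Both equalities follow by applying the intersection/generation assertion at the end of Theorem 6.7 to a suitable $2\times 2$ subdiagram in which $D$ sits as the smallest corner and $\mathcal P_2$ (respectively $NC_2$) is adjacent to the corresponding generated factor. This closes the equivalence and identifies the quantum groups in the statement with those of Theorem 6.7.
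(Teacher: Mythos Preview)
Your proposal is correct and follows essentially the same strategy as the paper: classify $G_{class}$ and $G_{real}$ separately, then combine using the intersection/generation diagram of Theorem 6.7. The paper simply cites \cite{twe} for the classification of the classical factor $\mathcal P_2\subset E\subset P_2$ (yielding the $P_2^S$), whereas you sketch the color-imbalance argument yourself; otherwise the two arguments coincide, including the appeal to Theorem 6.1 for the real factor and to Theorem 6.7 for the converse.
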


\begin{proof}
According to \cite{twe}, the only easy quantum groups $O_N\subset G\subset U_N$ are the compact groups $O_{N,S}$, with $S\in\{2,3,\ldots,\infty\}$. Thus, we must have $G_{class}\in\{O_N,O_{N,S},U_N\}$. On the other hand, we know as well from Theorem 6.1 that we must have $G_{real}\in\{O_N,O_N^*,O_N^+\}$. Together with Theorem 6.7 above, this gives the result.
\end{proof}

The classification in the unitary case, and notably the classification of the intermediate easy quantum groups $O_N\subset G\subset U_N^+$, remains an open problem. See \cite{ba2}, \cite{bb2}, \cite{bdd}.

\section{Maximal tori}

In this section and in the next one we discuss various methods for the study of the closed subgroups $H\subset G$. The classical subgroups are easy to find, due to:

\begin{proposition}
Given a closed subgroup $G\subset U_N^+$, the classical subgroups $H\subset G$ are precisely the closed subgroups $H\subset G_{class}$, where $G_{class}=G\cap U_N$.
\end{proposition}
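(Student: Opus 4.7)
The plan is to use the duality between closed quantum subgroups $H\subset G$ and surjective Woronowicz algebra morphisms $C(G)\twoheadrightarrow C(H)$, together with the fact that ``$H$ classical'' is equivalent to ``$C(H)$ commutative''. The argument then reduces to a universal property computation.

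One direction is immediate: if $H\subset G_{class}$, then composing the canonical surjections $C(G)\twoheadrightarrow C(G_{class})\twoheadrightarrow C(H)$ exhibits $H$ as a closed subgroup of $G$, and $H$ is classical because $G_{class}\subset U_N$.

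For the converse, which is really the content of the statement, I would first observe that $C(G_{class})$ is precisely the abelianization of $C(G)$. Indeed, by Proposition 2.7(2) the embedding $U_N\subset U_N^+$ is obtained by dividing $C(U_N^+)$ by its commutator ideal, and passing this through the quotient $C(U_N^+)\twoheadrightarrow C(G)$ identifies $C(G_{class})=C(G\cap U_N)$ with the quotient of $C(G)$ by its own commutator ideal. Given now a classical $H\subset G$, the associated surjection $\pi:C(G)\twoheadrightarrow C(H)$ has commutative target, hence kills the commutator ideal of $C(G)$, and therefore factors through $C(G_{class})$, producing the required inclusion $H\subset G_{class}$.

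The only step needing attention is the compatibility of the intersection $G\cap U_N$, viewed inside $U_N^+$, with the abelianization of $C(G)$; this is immediate from the universal definitions, so no serious obstacle arises. In essence, the statement is a formal consequence of the Gelfand-Woronowicz dictionary, once one notes that $G_{class}$ plays the role of the ``maximal classical quotient'' of $G$.
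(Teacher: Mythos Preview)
Your proof is correct and follows exactly the same idea as the paper's: the paper simply observes that $C(G_{class})=C(G)/J$ with $J$ the commutator ideal, from which the result is declared clear. Your write-up just unpacks this one-liner into the explicit factorization argument via the universal property of the abelianization.
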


\begin{proof}
This is clear, because the formula $G_{class}=G\cap U_N$ means by definition that we have $C(G_{class})=C(G)/J$, where $J$ is the commutator ideal of $C(G)$.
\end{proof}

Let us investigate now the group dual subgroups $\widehat{\Lambda}\subset G$, whose knowledge is very useful as well. We have already met, in Proposition 2.9 above, the diagonal torus $T\subset G$. The construction there has the following generalization:

\begin{proposition}
Given a closed subgroup $G\subset U_N^+$ and a matrix $Q\in U_N$, we let $T_Q\subset G$ be the diagonal torus of $G$, with fundamental representation spinned by $Q$:
$$C(T_Q)=C(G)\Big/\left<(QuQ^*)_{ij}=0\Big|\forall i\neq j\right>$$
This torus is then a group dual, $T_Q=\widehat{\Lambda}_Q$, where $\Lambda_Q=<g_1,\ldots,g_N>$ is the discrete group generated by the elements $g_i=(QuQ^*)_{ii}$, which are unitaries inside $C(T_Q)$.
\end{proposition}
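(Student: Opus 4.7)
The plan is to mimic the proof of Proposition 2.9 verbatim, but applied to the spinned matrix $v = QuQ^*$ in place of $u$. The first thing to verify is that, even though $v$ is not the fundamental corepresentation of $G$ in the sense of Definition 2.4, it still satisfies the three conditions of a corepresentation in the sense of Definition 3.1. This is a direct computation: writing $v_{ij} = \sum_{ab} Q_{ia} u_{ab} \bar Q_{jb}$, the comultiplicativity
$$\Delta(v_{ij}) = \sum_k v_{ik}\otimes v_{kj}$$
reduces, after expanding both sides, to the identity $\sum_k \bar Q_{kc} Q_{kd} = \delta_{cd}$, which holds because $Q\in U_N$. The analogous checks for $\varepsilon$ and $S$, and the fact that the matrix $v$ is unitary, follow from $Q^*Q = QQ^* = 1$ in the same way.

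Next, I would pass to the quotient $C(T_Q)$. The defining relations $(QuQ^*)_{ij} = 0$ for $i \neq j$ force $v$ to become diagonal in $C(T_Q)$. Since $v$ is unitary and diagonal, the diagonal entries $g_i = v_{ii}$ must satisfy $g_i g_i^* = g_i^* g_i = 1$, i.e.\ each $g_i$ is a unitary in $C(T_Q)$. Applying the corepresentation identity from the first step with $i = j$ and using that the off-diagonal $v_{ik}$ vanish for $k \neq i$ gives
$$\Delta(g_i) = g_i \otimes g_i,$$
so each $g_i$ is group-like.

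Finally, I would identify $C(T_Q)$ as a group $C^*$-algebra. Since $C(T_Q)$ is generated, as a $C^*$-algebra, by the group-like unitaries $g_1,\ldots,g_N$, and inherits from $C(G)$ a Woronowicz structure that is compatible with declaring these $g_i$ group-like, the universal property of the full group $C^*$-algebra identifies $C(T_Q)$ with $C^*(\Lambda_Q)$, where $\Lambda_Q = \langle g_1,\ldots,g_N\rangle$ is the discrete group abstractly generated by the $g_i$ as unitaries. By the conventions of Definition 2.4 this means precisely $T_Q = \widehat{\Lambda}_Q$.

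The only real content beyond Proposition 2.9 is the first step, which is also the most routine; the main obstacle, such as it is, lies in correctly unwinding the unitary change of basis, namely checking that every structural property of $u$ used in Proposition 2.9 transfers to $v = QuQ^*$. Once that bookkeeping is done, steps two and three are identical to the unspinned case.
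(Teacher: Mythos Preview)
Your proof is correct and follows exactly the route the paper intends: the paper's own proof is a one-line reference to Proposition 2.9, observing that $T_Q$ is by definition the diagonal torus of $G$ taken with respect to the spinned fundamental corepresentation $v=QuQ^*$. You have simply unpacked that reference, checking explicitly that $v$ is a unitary corepresentation, that the $g_i$ are group-like unitaries, and that they generate $C(T_Q)$; this is precisely the content of Proposition 2.9 applied to $(C(G),v)$ in place of $(C(G),u)$.
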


\begin{proof}
This follows indeed from Proposition 2.9 because, as said in the statement, $T_Q$ is by definition a diagonal torus, and so is a group dual, as indicated.
\end{proof}

With this notion in hand, we have the following result, coming from \cite{wo1}:

\begin{theorem}
Given a closed subgroup $G\subset U_N^+$, its group dual subgroups $\widehat{\Lambda}\subset G$ are exactly the quantum subgroups of type $\widehat{\Lambda}\subset T_Q$, with $Q\in U_N$.
\end{theorem}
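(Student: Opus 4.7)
The plan is to prove both directions of the equivalence. The ``if'' direction is immediate: for any $Q\in U_N$ the diagonal torus $T_Q\subset G$ is itself a group dual by Proposition 7.2, hence any group dual subgroup $\widehat{\Lambda}\subset T_Q$ is automatically a group dual subgroup of $G$. The real content is the converse, so I focus on that.

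Given a group dual subgroup $\widehat{\Lambda}\subset G$, we have by definition a surjective morphism of Woronowicz algebras $\pi:C(G)\to C^*(\Lambda)$. Setting $v_{ij}=\pi(u_{ij})$, the matrix $v=(v_{ij})\in M_N(C^*(\Lambda))$ is a unitary corepresentation of $C^*(\Lambda)$, since $\pi$ is a $*$-morphism intertwining the comultiplications. The key step is to decompose $v$ into one-dimensional pieces, and this is exactly what the Peter-Weyl theorem from Theorem 3.6 provides: every corepresentation of $C^*(\Lambda)$ is equivalent to a direct sum of irreducibles, and since $\Delta(g)=g\otimes g$ for $g\in\Lambda$, the irreducible corepresentations of $C^*(\Lambda)$ are precisely the group elements. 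Hence
$$v\sim\mathrm{diag}(g_1,\ldots,g_N)$$
for some $g_1,\ldots,g_N\in\Lambda$.

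The next step is to upgrade this to a \emph{unitary} equivalence, i.e., to produce a $Q\in U_N$ actually implementing the equivalence. This follows from the standard Woronowicz fact that equivalent unitary corepresentations are unitarily equivalent: given an invertible intertwiner $T$ between two unitary corepresentations, its polar decomposition $T=U|T|$ produces a unitary intertwiner $U$, because $|T|$ commutes with the corepresentations as well. Applying this in our setting yields $Q\in U_N$ with $QvQ^*=\mathrm{diag}(g_1,\ldots,g_N)$. Unpacking this, $\pi((QuQ^*)_{ij})=(QvQ^*)_{ij}=0$ for all $i\neq j$, so by the universal property of $C(T_Q)$ from Proposition 7.2 the morphism $\pi$ factors as
$$C(G)\longrightarrow C(T_Q)\longrightarrow C^*(\Lambda),$$
and this factorization is precisely the required inclusion $\widehat{\Lambda}\subset T_Q$.

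The step I expect to be the main obstacle is passing from mere equivalence of corepresentations to unitary equivalence, since only the latter gives $Q\in U_N$; this hinges on the fact that the intertwiner spaces behave well under polar decomposition, which is a standard but nontrivial Woronowicz-type ingredient and should be invoked explicitly. Everything else, including the identification of the irreducibles of $C^*(\Lambda)$ as group elements and the factorization through $C(T_Q)$, is a direct unraveling of the definitions from Sections 2 and 3.
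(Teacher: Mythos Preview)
Your argument is correct and is exactly the standard unpacking of the one-line proof the paper gives (``This follows indeed from Woronowicz's results in \cite{wo1}''): decompose the pushed-forward corepresentation $v=\pi(u)$ into irreducibles via Peter--Weyl, identify the irreducibles of $C^*(\Lambda)$ with group elements, and upgrade the equivalence to a unitary one by polar decomposition of the intertwiner. The paper supplies no further detail beyond the citation, so your version is in fact more complete than the paper's; the only point worth tightening is to state explicitly why every irreducible of $C^*(\Lambda)$ is a group element (cocommutativity forces one-dimensionality, and a group-like unitary in $\mathbb C[\Lambda]$ must lie in $\Lambda$), which the paper alludes to just after Definition~3.1.
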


\begin{proof}
This follows indeed from Woronowicz's results in \cite{wo1}.
\end{proof}

Summarizing, if we agree that the group duals are the correct generalization of the ``tori'' from the classical case, we are led to the following definition:

\begin{definition}
The maximal torus of a closed subgroup $G\subset U_N^+$ is the family
$$T=\left\{T_Q\subset G\big|Q\in U_N\right\}$$
of diagonal tori of $G$, parametrized by the various spinning matrices $Q\in U_N$.
\end{definition}

Our aim now is to show that $T$ plays indeed the role of a maximal torus for $G$. Let us first develop some general theory for these maximal tori. We first have:

\begin{proposition}
Given a closed subgroup $H\subset G$, the tori of $G,H$ are related by
$$T_Q(H)=T_Q(G)\cap H$$
with the intersection operation being the usual one, taken inside $G$.
\end{proposition}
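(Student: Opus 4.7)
The plan is to unpack both sides of the claimed equality as explicit quotients of $C(G)$ and observe that they are quotients by the same ideal. This is essentially a bookkeeping argument; no serious analytic or combinatorial input is needed beyond the definitions of the torus construction and of intersection of closed quantum subgroups.

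First, I would introduce the Hopf $*$-ideal $I \subset C(G)$ encoding the inclusion $H \subset G$, so that $C(H) = C(G)/I$ and the fundamental corepresentation $v$ of $H$ is the image of $u$ under the quotient map $\pi:C(G)\to C(H)$. Under this identification the off-diagonal entries of $QvQ^*$ are precisely the images $\pi((QuQ^*)_{ij})$ for $i\neq j$. Applying Proposition 7.2 to $H$ then gives
$$C(T_Q(H)) = C(H)\Big/\left<(QvQ^*)_{ij}\,\big|\,i\neq j\right> = C(G)\Big/\left<I,\;(QuQ^*)_{ij}\,\big|\,i\neq j\right>.$$

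Next, I would compute the right-hand side. The intersection $T_Q(G)\cap H$ inside $G$ is, by the standard definition of intersection of closed quantum subgroups, the quantum subgroup of $G$ whose algebra is the quotient of $C(G)$ by the ideal generated by the two defining ideals. Since $C(T_Q(G)) = C(G)/J$ with $J=\langle (QuQ^*)_{ij}\mid i\neq j\rangle$, we obtain
$$C(T_Q(G)\cap H) = C(G)\Big/\left<I,\;(QuQ^*)_{ij}\,\big|\,i\neq j\right>,$$
which is exactly the algebra computed above for $C(T_Q(H))$. The identification is compatible with comultiplication because both quotients are taken by Hopf $*$-ideals, so the resulting isomorphism is an isomorphism of Woronowicz algebras, giving $T_Q(H)=T_Q(G)\cap H$ as closed quantum subgroups of $G$.

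The only point that requires a little care is the interpretation of the intersection $T_Q(G)\cap H$: one must check that ``adding the defining relations of $T_Q(G)$ to $C(H)$'' and ``adding the defining relations of $H$ to $C(T_Q(G))$'' produce the same quotient of $C(G)$, which is immediate from the fact that both amount to quotienting $C(G)$ by the sum $I+J$ of the two Hopf ideals. So there is no real obstacle; the statement is essentially a tautology once the intersection operation in the category of closed quantum subgroups of $G$ is spelled out.
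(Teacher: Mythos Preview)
Your proof is correct and follows essentially the same approach as the paper: both identify $C(T_Q(H))$ and $C(T_Q(G)\cap H)$ as the quotient of $C(G)$ by the ideal generated by $I=\ker(C(G)\to C(H))$ together with the off-diagonal entries $(QuQ^*)_{ij}$, $i\neq j$. The paper's version is slightly terser, writing out the computation only at $Q=1$ and declaring the general case similar, whereas you treat general $Q$ directly and add the observation about Hopf ideals.
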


\begin{proof}
Let $I=\ker(C(G)\to C(H))$. At $Q=1$ we have, indeed:
\begin{eqnarray*}
C(T_1(H))
&=&[C(G)/I]/<u_{ij}=0|i\neq j>\\
&=&[C(G)/<u_{ij}=0|i\neq j>]/I\\
&=&C(T_1(G)\cap H)
\end{eqnarray*}

In general, the proof is similar.
\end{proof}

Let us study the injectivity properties of the construction $G\to T$. We would like for instance to show that this construction is ``strictly increasing'' with respect to $\subset$. In other words, we would need a result stating that passing to a subgroup $H\subset G$ should decrease at least one of the tori $T_Q$. As a first statement in this direction, we have:

\begin{proposition}
Given a closed subgroup $G\subset U_N^+$, the following two constructions produce the same closed subgroup $G'\subset G$:
\begin{enumerate}
\item $G'=<T_Q|Q\in U_N>$, the closed subgroup generated by the tori $T_G\subset G$.

\item $G'=<\widehat{\Lambda}|\widehat{\Lambda}\subset G>$, the closed subgroup generated by the group duals $\widehat{\Lambda}\subset G$.
\end{enumerate}
\end{proposition}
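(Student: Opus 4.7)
The plan is to prove the equality by two inclusions, writing $G_1 = \langle T_Q \mid Q \in U_N \rangle$ and $G_2 = \langle \widehat{\Lambda} \mid \widehat{\Lambda} \subset G \rangle$ for the two candidate subgroups of $G$. The generation operation $\langle \cdot \rangle$ on a family of closed subgroups, defined in Definition 5.7 (and extending from pairs to arbitrary families in the obvious way), is monotone: at the Tannakian level it corresponds to intersecting the Hom spaces $\mathrm{Hom}(w^{\otimes k}, w^{\otimes l})$ over members of the family, and enlarging the family can only shrink the intersection, hence can only enlarge the resulting quantum group. This monotonicity will do the work on both sides.

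For the inclusion $G_1 \subset G_2$, I would simply observe that every spinned diagonal torus $T_Q$ is itself a group dual subgroup of $G$, by Proposition 7.2, which identifies $T_Q = \widehat{\Lambda_Q}$ for the explicit discrete group $\Lambda_Q = \langle (QuQ^*)_{ii} \rangle$. Thus the family $\{ T_Q : Q \in U_N \}$ is a subfamily of $\{ \widehat{\Lambda} : \widehat{\Lambda} \subset G \}$, and monotonicity of $\langle \cdot \rangle$ yields $G_1 \subset G_2$ immediately.

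For the reverse inclusion $G_2 \subset G_1$, I would invoke Theorem 7.3, which states that the group dual subgroups of $G$ are exactly the quantum subgroups of the spinned tori, i.e.\ every $\widehat{\Lambda} \subset G$ satisfies $\widehat{\Lambda} \subset T_Q$ for some $Q \in U_N$. Since each $T_Q$ is by definition contained in $G_1$, this gives $\widehat{\Lambda} \subset G_1$ for every group dual subgroup of $G$, and therefore $G_2 \subset G_1$.

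Combining the two inclusions gives $G_1 = G_2$. There is no genuine obstacle: all the real content has been packaged into Proposition 7.2 (for $G_1 \subset G_2$) and Theorem 7.3 (for $G_2 \subset G_1$, which is where Woronowicz's classification of group dual subgroups in \cite{wo1} is doing the work). The only point to verify in passing is the monotonicity of $\langle \cdot \rangle$ on families, but this is automatic from its Tannakian characterization, as noted above.
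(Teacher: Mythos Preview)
Your proof is correct and follows essentially the same approach as the paper: both argue the two inclusions separately, using that each $T_Q$ is a group dual (Proposition 7.2) for $G_1\subset G_2$, and that every group dual $\widehat{\Lambda}\subset G$ sits inside some $T_Q$ (Theorem 7.3) for $G_2\subset G_1$. Your explicit remark about monotonicity of the generation operation is a small elaboration, but the logical structure is identical.
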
 

\begin{proof}
Let $G_1',G_2'\subset G$ be the two subgroups constructed above. Since any torus $T_Q$ is a group dual, we have $G_1'\subset G_2'$. Conversely, since any group dual $\widehat{\Lambda}\subset G$ appears as a subgroup of a certain torus, $\widehat{\Lambda}\subset T_Q\subset G$, we have $G_2'\subset G_1'$, and we are done.
\end{proof}

In view of this, it looks reasonable to formulate:

\begin{definition}
We say that $G\subset U_N^+$ is decomposable when the inclusion
$$G'=<T_Q|Q\in U_N>\subset G$$
constructed above is an equality, i.e. when $G$ is generated by its tori.
\end{definition}

At the level of basic examples of such quantum groups, we have:

\begin{theorem}
The following subgroups $G\subset U_N^+$ are decomposable:
\begin{enumerate}
\item The classical groups, $G\subset U_N\subset U_N^+$.

\item The group duals, $G=\widehat{\Gamma}\subset U_N^+$.
\end{enumerate}
\end{theorem}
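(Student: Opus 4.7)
The plan is to treat the two cases by very different (and essentially disjoint) mechanisms: for (1) a spectral/diagonalization argument that exhibits every point of $G$ as lying in some torus $T_Q$, and for (2) a direct observation that the diagonal torus $T_1$ already equals $G$.

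For (1), let $G\subset U_N$ be a classical compact group. By Proposition 7.5 applied to the inclusion $G\subset U_N$, we have $T_Q(G)=T_Q(U_N)\cap G$, which concretely means that $T_Q(G)$ consists of those $g\in G$ such that $QgQ^*$ is diagonal. The key point is that, since every element $g\in G\subset U_N$ is a unitary matrix, the spectral theorem provides a unitary $Q=Q(g)\in U_N$ with $Q g Q^*$ diagonal; hence $g\in T_{Q(g)}(G)$. Therefore the set-theoretic union $\bigcup_{Q\in U_N}T_Q(G)$ is all of $G$. Since $G$ is classical, so that the $C^*$-algebra $C(G)$ is commutative and points separate functions, the generated closed subgroup in the sense of Definition 5.7 coincides with the topologically generated closed subgroup, which is $G$ itself. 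Thus $G'=\langle T_Q\mid Q\in U_N\rangle=G$, proving decomposability.

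For (2), let $G=\widehat{\Gamma}$ with $\Gamma=\langle g_1,\dots,g_N\rangle$, so that $C(G)=C^*(\Gamma)$ with fundamental corepresentation $u=\mathrm{diag}(g_1,\dots,g_N)$, as in Definition 2.4 and the discussion following it. At $Q=1$ the defining ideal of $T_1(G)$ is generated by $\{u_{ij}\mid i\neq j\}$, but these elements are already zero in $C^*(\Gamma)$ by the very form of $u$. Hence the quotient map $C(G)\to C(T_1(G))$ is an isomorphism, i.e.\ $T_1(G)=G$. Consequently $G$ is trivially generated by a single one of its tori, and is decomposable.

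The only genuinely non-trivial point is in (1), namely the translation between ``every point lies in some $T_Q$'' and ``$G$ is generated by the $T_Q$ in the quantum sense of Definition 5.7''. Once one is in the classical setting, however, this follows from standard Gelfand duality: the category-theoretic join $\langle T_Q\mid Q\in U_N\rangle$ restricted to commutative algebras is just the closure of the union of the $T_Q$, and the spectral argument shows this union is already all of $G$. I do not expect any further obstacle.
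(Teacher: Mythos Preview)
Your proof is correct and follows essentially the same approach as the paper: for (1), both use diagonalization of unitaries to place each element of $G$ in some $T_Q$ (you make the passage from ``union equals $G$'' to ``$G'=G$'' explicit via Gelfand duality, which the paper leaves tacit), and for (2), both observe that $G$ is already a single torus. The only minor point is that your argument in (2) assumes the diagonal embedding $u=\mathrm{diag}(g_1,\ldots,g_N)$, whereas the paper invokes Proposition~7.6 or Theorem~7.3 to cover group duals embedded in $U_N^+$ via an arbitrary spinning matrix $Q$; replacing $T_1$ by the appropriate $T_Q$ handles this at once.
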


\begin{proof}
This is elementary, but not trivial, the proofs being as follows:

(1) For $G=U_N$ we have $T_Q(G)=Q^*\mathbb T^NQ$, where $\mathbb T^N\subset U_N$ are the diagonal matrices, and so by Proposition 7.5 we obtain that for $G\subset U_N$ we have $T_Q(H)=Q^*\mathbb T^NQ\cap H$. Now since any group element $U\in H$ is diagonalizable, $U=Q^*DQ$ with $Q\in U_N,D\in\mathbb T^N$, we have $U\in T_Q(H)$ for this value of $Q\in U_N$, and this gives the result.

(2) This follows from Proposition 7.6 above, or directly from Theorem 7.3.
\end{proof}

In order to obtain more results, we can use Tannakian duality. We have:

\begin{proposition}
A closed subgroup $G\subset U_N^+$ is decomposable precisely when
$$\xi\in Fix(u_Q^{\otimes k}),\forall Q\in U_N\implies\xi\in Fix(u^{\otimes k})$$
where $u_Q=diag(g_1,\ldots,g_N)$ is the fundamental corepresentation of $T_Q\subset G$.
\end{proposition}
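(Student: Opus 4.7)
The plan is to use Tannakian duality (Theorem 3.9) to translate the condition ``$G$ is decomposable'' into a statement purely about fix spaces of tensor powers. Since each torus $T_Q$ is a closed subgroup of $G$, we automatically have $G' := \langle T_Q \mid Q \in U_N\rangle \subset G$, so decomposability is equivalent to the reverse inclusion $G \subset G'$. By Tannakian duality, this reverse inclusion holds if and only if for every colored integer $k$ we have
$$Fix(u'^{\otimes k}) \subset Fix(u^{\otimes k}),$$
where $u'$ denotes the fundamental corepresentation of $G'$. (The Hom-space version follows automatically, by Frobenius reciprocity applied at the level of colored tensor powers.)

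Second, I would identify $Fix(u'^{\otimes k})$ by extending the binary ``generated subgroup'' construction in Definition 5.7 to the full family $\{T_Q\}_{Q \in U_N}$. The Tannakian category of the quantum group generated by a family of closed subgroups is the intersection of the individual Tannakian categories, so at the level of fix spaces we obtain
$$Fix(u'^{\otimes k}) = \bigcap_{Q \in U_N} Fix(u_Q^{\otimes k}).$$
Here $u_Q = \mathrm{diag}(g_1,\ldots,g_N)$ is the diagonal fundamental corepresentation of $T_Q$, viewed inside $M_N(C(G))$ via the canonical surjection $C(G) \to C(T_Q)$, so that the fix space $Fix(u_Q^{\otimes k})$ does indeed live in the same ambient space $(\mathbb{C}^N)^{\otimes k}$ as $Fix(u^{\otimes k})$.

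Combining the two steps, $G$ is decomposable precisely when, for every colored integer $k$,
$$\bigcap_{Q \in U_N} Fix(u_Q^{\otimes k}) \subset Fix(u^{\otimes k}),$$
which is exactly the implication stated in the proposition (the reverse inclusion being automatic since $T_Q \subset G$ gives $Fix(u^{\otimes k}) \subset Fix(u_Q^{\otimes k})$ for every $Q$).

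The main obstacle is Step 2: justifying that the ``intersection of Tannakian categories'' recipe from Definition 5.7, stated there only for two subgroups, extends correctly to an arbitrary (here uncountable) family parametrized by $Q \in U_N$. This requires checking that the resulting system $C_{kl} = \bigcap_Q Hom(u_Q^{\otimes k}, u_Q^{\otimes l})$ still satisfies the Tannakian axioms and that the Woronowicz/Malacarne reconstruction (Theorem 3.9) produces exactly the subgroup $\langle T_Q \mid Q \in U_N\rangle$ defined abstractly as the smallest closed subgroup containing all $T_Q$. Once this routine extension is in place, everything else is a direct application of the duality.
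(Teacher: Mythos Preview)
Your proposal is correct and follows essentially the same route as the paper: identify the Tannakian category of $G'=\langle T_Q\mid Q\in U_N\rangle$ as the intersection $\bigcap_Q Hom(u_Q^{\otimes k},u_Q^{\otimes l})$, conclude that $G=G'$ is equivalent to equality of Hom-spaces, and then invoke Frobenius duality to reduce to fixed-point spaces. The paper simply asserts the intersection formula for the Tannakian category of the generated subgroup without further comment, so your explicit flagging of the passage from two subgroups to an uncountable family is a point of added care rather than a divergence in method.
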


\begin{proof}
The Tannakian category associated to $G'=<T_Q|Q\in U_N>$ is given by:
$$Hom(v^{\otimes k},v^{\otimes l})=\bigcap_{Q\in U_N}Hom(u_Q^{\otimes k},u_Q^{\otimes l})$$

We conclude that the equality $G=G'$ is equivalent to the following collection of conditions, one for any pair $k,l$ of colored integers, as in \cite{mal}:
$$Hom(u^{\otimes k},u^{\otimes l})=\bigcap_{Q\in U_N}Hom(u_Q^{\otimes k},u_Q^{\otimes l})$$

Moreover, by Frobenius duality we can restrict the attention if we want to the spaces of fixed points, and this gives the conclusion in the statement. See \cite{mal}, \cite{wo2}. 
\end{proof}

In order to apply the above result, we can use the following formula, from \cite{bpa}:

\begin{proposition}
The intertwining formula $T\in Hom(u^{\otimes k},u^{\otimes l})$, with $u=QvQ^*$, where $v=diag(g_1,\ldots,g_N)$, is equivalent to the collection of conditions
$$(T^Q)_{j_1\ldots j_l,i_1\ldots i_k}\neq0\implies g_{i_1}\ldots g_{i_k}=g_{j_1}\ldots g_{j_l}$$
one for each choice of the multi-indices $i,j$, where $T^Q=(Q^*)^{\otimes l}TQ^{\otimes k}$. 
\end{proposition}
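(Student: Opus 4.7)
The plan is to reduce the intertwining relation for $u$ to one for $v$ via a change of basis, and then read off the scalar conditions directly from the fact that $v$ is diagonal in the standard basis.

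First I would unwind the definition. The condition $T \in \mathrm{Hom}(u^{\otimes k}, u^{\otimes l})$ means $T u^{\otimes k} = u^{\otimes l} T$. Substituting $u = QvQ^*$ and using multiplicativity of tensor powers, $u^{\otimes k} = Q^{\otimes k} v^{\otimes k} (Q^*)^{\otimes k}$, so the intertwining relation becomes
$$T Q^{\otimes k} v^{\otimes k} (Q^*)^{\otimes k} = Q^{\otimes l} v^{\otimes l} (Q^*)^{\otimes l} T.$$
Multiplying on the left by $(Q^*)^{\otimes l}$ and on the right by $Q^{\otimes k}$, and recalling $T^Q = (Q^*)^{\otimes l} T Q^{\otimes k}$, this is equivalent to
$$T^Q v^{\otimes k} = v^{\otimes l}\, T^Q.$$

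Next I would compute both sides in coordinates. Since $v = \mathrm{diag}(g_1,\ldots,g_N)$, the tensor power $v^{\otimes k}$ is diagonal in the basis $e_{i_1}\otimes\cdots\otimes e_{i_k}$, with eigenvalue $g_{i_1}\cdots g_{i_k}$, and similarly for $v^{\otimes l}$. Therefore the $(j_1\ldots j_l, i_1\ldots i_k)$ entry of the left-hand side is $(T^Q)_{j_1\ldots j_l,\, i_1\ldots i_k}\cdot g_{i_1}\cdots g_{i_k}$, while the corresponding entry of the right-hand side is $g_{j_1}\cdots g_{j_l}\cdot (T^Q)_{j_1\ldots j_l,\, i_1\ldots i_k}$. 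The relation $T^Q v^{\otimes k} = v^{\otimes l} T^Q$ is thus equivalent to the family of identities, one per pair of multi-indices,
$$(T^Q)_{j_1\ldots j_l,\, i_1\ldots i_k}\bigl(g_{i_1}\cdots g_{i_k} - g_{j_1}\cdots g_{j_l}\bigr) = 0,$$
taking place in $C(T_Q) = C^*(\Lambda_Q)$.

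Finally, since $(T^Q)_{j_1\ldots j_l,\, i_1\ldots i_k}$ is a scalar in $\mathbb{C}$, this identity says precisely that whenever the scalar coefficient is nonzero, the two words in the generators must coincide in $\Lambda_Q$, which gives the stated implication. The reverse direction is immediate: if the implication holds for every pair $(i,j)$, then each scalar equation above is satisfied, so $T^Q v^{\otimes k} = v^{\otimes l} T^Q$, and reversing the conjugation by $Q$ recovers $T\in\mathrm{Hom}(u^{\otimes k},u^{\otimes l})$. The only subtlety is the one just mentioned, namely that one must use the fact that the entries of $T^Q$ are complex numbers in order to separate the scalar factor from the group-algebra element and conclude the pointwise implication; this is the only step that requires any thought, and it is immediate here.
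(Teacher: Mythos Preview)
Your proof is correct and follows essentially the same approach as the paper. The paper simply asserts ``it is enough to prove the result at $Q=1$'' and then carries out the entrywise computation with the diagonal $v$; your explicit conjugation by $Q^{\otimes k}$ and $(Q^*)^{\otimes l}$ to pass from $T$ to $T^Q$ is precisely what justifies that reduction, so the two arguments coincide in substance.
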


\begin{proof}
It is enough to prove the result at $Q=1$, and here we have:
\begin{eqnarray*}
T\in Hom(u^{\otimes k},u^{\otimes l})
&\iff&\sum_jT_{ji}e_j\otimes g_i=\sum_jT_{ji}e_j\otimes g_j,\forall i\\
&\iff&T_{ji}g_i=T_{ji}g_j,\forall i,j\\
&\iff&[T_{ji}\neq0\implies g_i=g_j],\forall i,j
\end{eqnarray*}

Thus we have obtained the relation in the statement, and we are done.
\end{proof}

Now by putting everything together, we obtain:

\begin{theorem}
A closed subgroup $G\subset U_N^+$ is decomposable precisely when
$$(T^Q)_{j_1\ldots j_l,i_1\ldots i_k}\neq0\implies g_{i_1}\ldots g_{i_k}=g_{j_1}\ldots g_{j_l}\ ({\rm inside}\ \Gamma_Q)$$
for any $Q\in U_N$ implies $T\in Hom(u^{\otimes k},u^{\otimes l})$, and this, for any $k,l$.
\end{theorem}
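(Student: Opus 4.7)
My plan is to make explicit the combination of Propositions 7.8 and 7.9 --- three short steps suffice. First, I would restate Proposition 7.8 in terms of $Hom$ spaces rather than $Fix$ spaces. The proof of Proposition 7.8 established that decomposability of $G\subset U_N^+$ is equivalent to the equality
$$Hom(u^{\otimes k},u^{\otimes l})=\bigcap_{Q\in U_N}Hom(u_Q^{\otimes k},u_Q^{\otimes l})$$
for every pair of colored integers $k,l$. Since the inclusion $\subset$ here is automatic (each $T_Q$ being a closed quantum subgroup of $G$, its intertwiner spaces contain those of $G$), decomposability amounts to the reverse implication: any $T$ belonging to $Hom(u_Q^{\otimes k},u_Q^{\otimes l})$ for every $Q\in U_N$ must already lie in $Hom(u^{\otimes k},u^{\otimes l})$.

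Second, for each fixed $Q\in U_N$, the fundamental corepresentation of $T_Q=\widehat{\Lambda_Q}$ has exactly the spinned diagonal form $u_Q=Q\,diag(g_1,\ldots,g_N)\,Q^*$ required by Proposition 7.9. Applying that proposition at each $Q$, the condition $T\in Hom(u_Q^{\otimes k},u_Q^{\otimes l})$ translates into the combinatorial condition
$$(T^Q)_{j_1\ldots j_l,i_1\ldots i_k}\neq 0 \implies g_{i_1}\ldots g_{i_k}=g_{j_1}\ldots g_{j_l}\ (\text{inside }\Gamma_Q),$$
where $T^Q=(Q^*)^{\otimes l}TQ^{\otimes k}$, one relation per choice of multi-indices $i,j$. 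Substituting this equivalence into the decomposability criterion from the first step yields exactly the statement of the theorem.

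The only matter requiring real attention is the passage from $Fix$ to $Hom$ in Proposition 7.8; this is standard Frobenius duality, and is in fact already implicit in the proof of that proposition, where the intersection formula appears in $Hom$ form before being specialized to $Fix$. Once this is noted, there is no genuine obstacle: the whole argument reduces to a formal concatenation of the two preceding propositions, with no extra representation-theoretic input required.
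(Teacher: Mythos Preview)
Your argument is correct and is exactly the paper's own proof, which reads in its entirety: ``This follows indeed from Proposition 7.9 and Proposition 7.10 above.'' Your numbering is off by one---what you call Propositions 7.8 and 7.9 are the paper's Propositions 7.9 and 7.10---but the content you invoke (the $Hom$-space decomposability criterion and the combinatorial intertwining criterion for spinned diagonal corepresentations) is precisely what the paper uses, and your observation that the $Hom$ form of the criterion is already established in the proof of the $Fix$ version is accurate.
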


\begin{proof}
This follows indeed from Proposition 7.9 and Proposition 7.10 above.
\end{proof}

The above result is of course something quite technical, rather waiting to be applied in various concrete situations. In general, understanding the structure of the decomposable subgroups $G\subset U_N^+$, and the injectivity properties of the maximal torus construction $G\to T$, are definitely interesting problems, that we would like to raise here. 

There are as well several explicit conjectures regarding the maximal tori, the general idea being that the knowledge of $T$ solves most of the problems regarding $G$. See \cite{bpa}.

\section{Matrix models}

In this section we discuss matrix modelling questions for the closed subgroups $G\subset U_N^+$. We use the following matrix model formalism:

\begin{definition}
A matrix model for $C(G)$ is a morphism of $C^*$-algebras 
$$\pi:C(G)\to M_K(C(X))$$
with $X$ being a compact space, and with $K\in\mathbb N$. 
\end{definition}

The ``best'' kind of models are of course the faithful ones. However, since having an embedding $C(G)\subset M_K(C(X))$ forces the algebra $C(G)$ to be of type I, and so $G$ to be coamenable, we cannot expect such models to exist, in general. For instance $O_N^+$ with $N\geq3$, or $U_N^+$ with $N\geq2$, which are not coamenable, cannot have such models.

However, we have some interesting constructions of faithful models, as follows:

\begin{theorem}
The quantum groups $O_N^*,U_N^*$ are as follows:
\begin{enumerate}
\item We have an embedding $C(O_N^*)\subset M_2(C(U_N))$, mapping the coordinates $u_{ij}$ to antidiagonal self-adjoint matrices.

\item We have as well an embedding $C(U_N^*)\subset M_2(C(U_N\times U_N))$, obtained by using antidiagonal unitary matrices.
\end{enumerate}
\end{theorem}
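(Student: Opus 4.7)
The plan is to construct the two maps via the universal properties of $C(O_N^*)$ and $C(U_N^*)$, verify the defining relations in the image, and finally establish injectivity through Haar-state matching.

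For (1), I would define $\pi:C(O_N^*)\to M_2(C(U_N))$ by
$$\pi(u_{ij})=\begin{pmatrix}0 & v_{ij}\\ v_{ij}^* & 0\end{pmatrix}$$
where $v$ denotes the fundamental corepresentation of $U_N$; these matrices are visibly self-adjoint. That this extends to a $*$-homomorphism out of $C(O_N^*)$ requires two verifications. First, orthogonality of $(\pi(u_{ij}))_{i,j}$: computing $\sum_k\pi(u_{ik})\pi(u_{jk})$ yields the diagonal matrix with entries $\sum_k v_{ik}v_{jk}^*$ and $\sum_k v_{ik}^*v_{jk}$, both equal to $\delta_{ij}$ --- the first by $vv^*=1$, the second by $v^t\bar v=1$, which follows from unitarity of $v$ together with commutativity of $C(U_N)$. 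Second, the half-commutation $abc=cba$: a triple product of antidiagonal matrices of this shape remains antidiagonal, with nonzero entries that are length-three monomials in the commutative algebra $C(U_N)$, so $abc=cba$ is automatic. For (2), the analogous map $\pi:C(U_N^*)\to M_2(C(U_N\times U_N))$ is
$$\pi(u_{ij})=\begin{pmatrix}0 & a_{ij}\\ b_{ij} & 0\end{pmatrix}$$
with $a,b$ the fundamental coordinates on the two factors of $U_N\times U_N$; biunitarity of $(\pi(u_{ij}))_{i,j}$ unfolds into the separate biunitarity of $a$ and of $b$, and half-commutation is again free by commutativity of $C(U_N\times U_N)$.

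Injectivity is the substantive content. The plan is to prove that the state $\varphi=(\tfrac12\mathrm{tr}\otimes\int_{U_N})\circ\pi$ on $C(O_N^*)$ coincides with $\int_{O_N^*}$, and then to deduce injectivity from coamenability of $O_N^*$ combined with faithfulness of its Haar state. The matching $\varphi=\int_{O_N^*}$ is to be checked on arbitrary monomials $u_{i_1j_1}\cdots u_{i_kj_k}$: for odd $k$ both sides vanish trivially (antidiagonal trace on the left; empty $P_2^*(k)$ on the right via Theorem 3.10), while for even $k$ the partial trace on $M_2$ produces an averaged sum of alternating products of $v$'s and $v^*$'s, whose $C(U_N)$-integral expands via the $U_N$-Weingarten formula into a sum over matching pairings that, upon symmetrization, reorganizes into the $P_2^*$-Weingarten sum for $O_N^*$. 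Coamenability of $O_N^*$ follows from the Kesten criterion of Theorem 3.7, since by Theorem 4.9 the main character $\chi_u$ on $O_N^*$ has the same squeezed Gaussian law as in the classical case $O_N$. The argument for (2) is parallel, with $U_N\times U_N$ in place of $U_N$ and the independence of the two factors ensuring the Weingarten match.

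The delicate step is the moment matching above: reorganizing the $U_N$-Weingarten expansion for traces of antidiagonal products into the $P_2^*$-Weingarten expansion for $O_N^*$. Everything else --- namely the construction of $\pi$, the verification of orthogonality/biunitarity and half-commutation, and the transfer from the reduced to the universal algebra via coamenability --- is essentially formal once this combinatorial identification is in hand.
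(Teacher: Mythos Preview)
Your construction of the maps and verification of the defining relations are correct, and your overall strategy of proving faithfulness by matching the Haar state with $(\tfrac12\mathrm{tr}\otimes\int)\circ\pi$ is exactly the ``stationarity'' argument that the paper carries out later, in Theorem~8.8. The paper's own proof of Theorem~8.2, by contrast, points to a Tannakian route via Theorem~4.4: one shows that the Hopf image of $\pi$ has the same intertwiner spaces $span(T_\pi\mid\pi\in P_2^*)$ as $O_N^*$ (respectively $\mathcal P_2^*$ for $U_N^*$), and concludes by Tannakian duality. Your Weingarten-matching approach is morally equivalent and arguably more concrete, though the combinatorial reorganization you flag as delicate is indeed the heart of the matter.

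There is one genuine gap. You invoke Theorem~4.9 together with the Kesten criterion to obtain coamenability of $O_N^*$, but Theorem~4.9 computes only the \emph{asymptotic} ($N\to\infty$) law of $\chi_u$; it does not place $N$ in the spectrum of $\chi_u$ for any fixed finite $N$, so this step does not go through as written. Fortunately you do not need it: once you have established $\int_{O_N^*}=(\tfrac12\mathrm{tr}\otimes\int_{U_N})\circ\pi$, the map $\pi$ factors faithfully through $C(O_N^*)_{red}$, and since the image sits inside the type~I algebra $M_2(C(U_N))$ the reduced algebra is itself type~I, hence nuclear, which forces coamenability and thus $C(O_N^*)=C(O_N^*)_{red}$. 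This is precisely the argument the paper records after Definition~8.6, and it closes your loop without any separate appeal to Kesten.
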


\begin{proof}
Here the fact that we have a morphisms as in (1,2) is clear, because the antidiagonal matrices half-commute. The faithfulness of these models can be proved by using representation theory methods, and more specifically Theorem 4.4. See \cite{bb2}, \cite{bch}, \cite{bdu}.
\end{proof}

In order to deal now with the non-amenable case, let us go back to Definition 8.1. In the group dual case $G=\widehat{\Gamma}$, a matrix model $\pi:C^*(\Gamma)\to M_K(C(X))$ must come from a group representation $\rho:\Gamma\to C(X,U_K)$. Now observe that when $\rho$ is faithful, the induced representation $\pi$ is in general not faithful, its target algebra being finite dimensional. On the other hand, this representation ``reminds'' $\Gamma$. We say that $\pi$ is inner faithful.

We have in fact the following notions, coming from \cite{bb1}:

\begin{definition}
Let $\pi:C(G)\to M_K(C(X))$ be a matrix model. 
\begin{enumerate}
\item The Hopf image of $\pi$ is the smallest quotient Hopf $C^*$-algebra $C(G)\to C(H)$ producing a factorization of type $\pi:C(G)\to C(H)\to M_K(C(X))$.

\item When the inclusion $H\subset G$ is an isomorphism, i.e. when there is no non-trivial factorization as above, we say that $\pi$ is inner faithful.
\end{enumerate}
\end{definition}

In the case where $G=\widehat{\Gamma}$ is a group dual, $\pi$ must come from a group representation $\rho:\Gamma\to C(X,U_K)$, and the above factorization is simply the one obtained by taking the image, $\rho:\Gamma\to\Gamma'\subset C(X,U_K)$. Thus $\pi$ is inner faithful when $\Gamma\subset C(X,U_K)$.

Also, given a compact group $G$, and elements $g_1,\ldots,g_K\in G$, we have a representation $\pi:C(G)\to\mathbb C^K$, given by $f\to(f(g_1),\ldots,f(g_K))$. The minimal factorization of $\pi$ is then via $C(G')$, with $G'=\overline{<g_1,\ldots,g_K>}$, and $\pi$ is inner faithful when $G=G'$.

We refer to \cite{bb1}, \cite{bcv}, \cite{chi} for more on these facts, and for a number of related algebraic results. In what follows, we will rather use analytic techniques. Assume indeed that $X$ is a probability space. We have then the following result, from \cite{bfs}, \cite{wa3}:

\begin{theorem}
Given an inner faithful model $\pi:C(G)\to M_K(C(X))$, we have
$$\int_G=\lim_{k\to\infty}\frac{1}{k}\sum_{r=1}^k\int_G^r$$
where $\int_G^r=(\varphi\circ\pi)^{*r}$, with $\varphi=tr\otimes\int_X$ being the random matrix trace.
\end{theorem}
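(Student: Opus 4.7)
The strategy I would follow is to compute the Cesàro limit directly on matrix coefficients of irreducible corepresentations and match it against $\int_G$ via the Peter-Weyl characterization. Write $\psi = \varphi\circ\pi$ for the resulting state on $C(G)$. By Theorem 3.6 the dense Hopf $*$-subalgebra decomposes as $\mathcal A = \mathbb C\cdot 1\oplus\bigoplus_{\alpha\ne\mathrm{triv}}\mathcal A_\alpha$, and $\int_G$ is characterized as the unique state vanishing on every nontrivial block. For each irreducible corepresentation $\alpha$, set $M_\alpha = (\psi(\alpha_{ij}))\in M_{n_\alpha}(\mathbb C)$; convolution respects this decomposition, so $\psi^{*r}(\alpha_{ij}) = (M_\alpha^r)_{ij}$. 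Since $(\alpha_{ij})$ is unitary and $\psi$ is a state, $M_\alpha$ is contractive, so the Cesàro averages $\frac{1}{k}\sum_{r=1}^k M_\alpha^r$ converge to the spectral projection $P_\alpha$ onto $\ker(M_\alpha - \mathrm{id})$. Hence the right-hand side of the claimed identity defines a state $\mu$ on $C(G)$ with $\mu(\alpha_{ij}) = (P_\alpha)_{ij}$, and it suffices to show $P_\alpha = 0$ whenever $\alpha$ is nontrivial irreducible.

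Suppose for contradiction that $M_\alpha v = v$ for some $0\ne v\in\mathbb C^{n_\alpha}$, with $\alpha$ nontrivial irreducible. The key move is to introduce $w_i := \sum_j \alpha_{ij}v_j - v_i\in\mathcal A$, noting that $v\in\mathrm{Fix}(\alpha)$ iff every $w_i$ vanishes. Using $\sum_i\alpha_{ij}^*\alpha_{ik} = \delta_{jk}$ from unitarity of $\alpha$, a direct expansion gives
\[ \sum_i w_i^*w_i \;=\; 2\|v\|^2\cdot 1 \;-\; \sum_{ij}\bigl(\bar v_j v_i\,\alpha_{ij}^* + \bar v_i v_j\,\alpha_{ij}\bigr), \]
and applying $\psi$, together with the identity $v^*M_\alpha v = \|v\|^2$ coming from $M_\alpha v = v$, yields $\sum_i\psi(w_i^*w_i) = 0$. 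By positivity, $\psi(w_i^*w_i) = 0$ for every $i$. The identical computation with $\psi$ replaced by $\psi^{*r}$ works because $M_\alpha(\psi^{*r}) = M_\alpha^r$ still fixes $v$; thus $\psi^{*r}(w_i^*w_i) = 0$ for all $r\ge 1$ and all $i$.

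Next, I would exploit the fact that each iterate factors through an explicit matrix model,
\[ \psi^{*r} = \varphi^{\otimes r}\circ\pi^{(r)},\qquad \pi^{(r)} := \pi^{\otimes r}\circ\Delta^{(r-1)}\colon C(G)\to M_{K^r}(C(X^r)). \]
Since $\varphi = \mathrm{tr}\otimes\int_X$ is faithful on $M_K(C(X))$, so is $\varphi^{\otimes r}$ on the $r$-fold tensor product; positivity then forces $\pi^{(r)}(w_i) = 0$ for every $r$ and every $i$. At this step I would invoke the standard algebraic description of the Hopf image from \cite{bb1}: the intersection $\bigcap_{r\ge 1}\ker\pi^{(r)}$ coincides with the kernel of the quotient $C(G)\to C(H)$ onto the Hopf image $H$ of $\pi$. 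Each $w_i$ therefore lies in this kernel; but by inner faithfulness of $\pi$ one has $H = G$, so the kernel is zero and $w_i = 0$ for all $i$. This places $v$ in $\mathrm{Fix}(\alpha)$, contradicting irreducibility of the nontrivial $\alpha$, and completes the proof.

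The principal obstacle I anticipate is justifying the identification $\bigcap_r\ker\pi^{(r)} = \ker(C(G)\to C(H))$. One must verify that the intersection is itself a Hopf $*$-ideal, using the coassociativity identity $\pi^{(r+s)} = (\pi^{(r)}\otimes\pi^{(s)})\circ\Delta$, and then that it is the largest Hopf $*$-ideal contained in $\ker\pi$. For the complete details of this algebraic input, and for an alternative derivation of the formula via idempotent states, I would refer to \cite{bfs} and \cite{wa3}.
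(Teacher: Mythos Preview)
Your argument is correct and considerably more explicit than what the paper provides: the paper's proof consists solely of references to \cite{bfs} (for $X=\{\cdot\}$) and \cite{wa3} (general $X$), both of which proceed via the idempotent state machinery of \cite{fsk}. In that approach one shows that the Ces\`aro averages converge weak-$*$ to an idempotent state $\mu$, invokes the structure theory of idempotent states on Kac algebras to identify $\mu$ as the Haar state of some quotient $C(G)\to C(H')$ through which $\pi$ factors, and concludes $H'=G$ by inner faithfulness. Your route sidesteps this abstract layer: you decompose along Peter--Weyl blocks, reduce everything to proving $\ker(M_\alpha-I)=0$ for nontrivial irreducible $\alpha$, and settle that by the positivity identity $\sum_i\psi^{*r}(w_i^*w_i)=0$ combined with the description of the Hopf-image kernel as $\bigcap_r\ker\pi^{(r)}$ from \cite{bb1}. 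Both arguments are ultimately powered by the same mechanism---inner faithfulness forces the relevant Hopf ideal to vanish---but yours makes the contribution of each irreducible transparent and avoids the classification of idempotent states, at the cost of importing the algebraic Hopf-image characterization you correctly flag as the main input. One small caveat worth stating up front: your step ``$\varphi^{\otimes r}$ faithful $\Rightarrow\pi^{(r)}(w_i)=0$'' genuinely needs $\int_X$ to be faithful, an implicit hypothesis in the theorem (and a necessary one: the formula fails for a Dirac mass on $X$).
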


\begin{proof}
This was proved in \cite{bfs} in the case $X=\{.\}$, using idempotent state theory from \cite{fsk}. The general case was recently established in \cite{wa3}.
\end{proof}

The truncated integrals $\int_G^r$ can be evaluated as follows:

\begin{proposition}
Assuming that $\pi:C(G)\to M_K(C(X))$ maps $u_{ij}\to U_{ij}^x$, we have
$$\int_G^ru_{i_1j_1}^{e_1}\ldots u_{i_pj_p}^{e_p}=(T_e^r)_{i_1\ldots i_p,j_1\ldots j_p}$$
where $T_e\in M_{N^p}(\mathbb C)$ with $e\in\{1,*\}^p$ is given by $(T_e)_{i_1\ldots i_p,j_1\ldots j_p}=\int_Xtr(U_{i_1j_1}^{x,e_1}\ldots U_{i_pj_p}^{x,e_p})dx$.
\end{proposition}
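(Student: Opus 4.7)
The strategy is a straightforward induction on $r$, using the convolution formula $\phi*\psi = (\phi\otimes\psi)\Delta$ defining $\int_G^r$, together with the multiplicativity of $\Delta$ on words in the generators. The key observation is that the comultiplication applied to a monomial in $u_{ij}$ and $u_{ij}^*$ is what produces matrix multiplication of the operators $T_e$ on the nose.

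First I would settle the base case $r=1$. Since $\pi$ is a $*$-homomorphism, it sends $u_{i_1j_1}^{e_1}\cdots u_{i_pj_p}^{e_p}$ to $U_{i_1j_1}^{x,e_1}\cdots U_{i_pj_p}^{x,e_p}\in M_K(C(X))$. Applying $\varphi = \mathrm{tr}\otimes\int_X$ gives exactly the definition of $(T_e)_{i_1\ldots i_p,j_1\ldots j_p}$, which is $(T_e^1)_{i_1\ldots i_p,j_1\ldots j_p}$ as required.

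For the inductive step, the crucial computation is to iterate the Woronowicz comultiplication formula. Using $\Delta(u_{ij}) = \sum_k u_{ik}\otimes u_{kj}$ and the fact that $S(u_{ij}) = u_{ji}^*$ is an antihomomorphism (which together imply $\Delta(u_{ij}^*) = \sum_k u_{ik}^*\otimes u_{kj}^*$, since $\bar u$ is also a corepresentation), the multiplicativity of $\Delta$ yields
\begin{equation*}
\Delta\bigl(u_{i_1j_1}^{e_1}\cdots u_{i_pj_p}^{e_p}\bigr) = \sum_{k_1,\ldots,k_p} u_{i_1k_1}^{e_1}\cdots u_{i_pk_p}^{e_p}\otimes u_{k_1j_1}^{e_1}\cdots u_{k_pj_p}^{e_p}.
\end{equation*}
Writing $\int_G^r = \int_G^{r-1}*(\varphi\circ\pi) = (\int_G^{r-1}\otimes(\varphi\circ\pi))\Delta$ and applying this to the monomial, the inductive hypothesis gives $\int_G^{r-1}$ of the first factor as $(T_e^{r-1})_{i_1\ldots i_p,k_1\ldots k_p}$ and the $r=1$ case gives $\varphi\circ\pi$ of the second factor as $(T_e)_{k_1\ldots k_p,j_1\ldots j_p}$. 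Summing over the intermediate indices $k_1,\ldots,k_p$ is exactly the matrix product, yielding $(T_e^r)_{i_1\ldots i_p,j_1\ldots j_p}$.

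There is no real obstacle here; the only thing to verify carefully is that the formula $\Delta(u_{ij}^*) = \sum_k u_{ik}^*\otimes u_{kj}^*$ holds (so that the displayed $\Delta$-formula is valid uniformly in the exponents $e_i\in\{1,*\}$). This is immediate from $\Delta$ being a $*$-homomorphism. Everything else is bookkeeping: the multi-index $(i_1,\ldots,i_p)$ plays the role of a single index for the matrix $T_e$ of size $N^p$, and the convolution iteration is precisely matrix multiplication in this basis.
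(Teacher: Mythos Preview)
Your proof is correct and follows exactly the approach indicated in the paper, which merely states that the result follows from the convolution formula $\phi*\psi=(\phi\otimes\psi)\Delta$ and the comultiplication formula $\Delta(u_{ij})=\sum_ku_{ik}\otimes u_{kj}$. You have simply made the implicit induction explicit and carefully checked the $*$-case, which is precisely the bookkeeping the paper leaves to the reader.
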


\begin{proof}
This follows indeed from the definition of the various objects involved, namely from $\phi*\psi=(\phi\otimes\psi)\Delta$, and from $\Delta(u_{ij})=\sum_ku_{ik}\otimes u_{kj}$. See \cite{bb2}.
\end{proof}

As a first application, we can further investigate the faithful models, by using:

\begin{definition}
A stationary model for $C(G)$ is a random matrix model
$$\pi:C(G)\to M_K(C(X))$$
having the property $\int_G=(tr\otimes\int_X)\pi$.
\end{definition}

Observe that any stationary model is faithful. Indeed, the stationarity condition gives a factorization $\pi:C(G)\to C(G)_{red}\subset M_K(C(X))$, and since the image algebra $C(G)_{red}$ follows to be of type I, and therefore nuclear, $G$ must be co-amenable, and so $\pi$ must be faithful. For some background on these questions, we refer to \cite{ntu}.

As a useful criterion for the stationarity property, we have:

\begin{proposition}
For $\pi:C(G)\to M_K(C(X))$, the following are equivalent:
\begin{enumerate}
\item $Im(\pi)$ is a Hopf algebra, and $(tr\otimes\int_X)\pi$ is the Haar integration on it.

\item $\psi=(tr\otimes\int_X)\pi$ satisfies the idempotent state property $\psi*\psi=\psi$.

\item $T_e^2=T_e$, $\forall p\in\mathbb N$, $\forall e\in\{1,*\}^p$, where $(T_e)_{i_1\ldots i_p,j_1\ldots j_p}=(tr\otimes\int_X)(U_{i_1j_1}^{e_1}\ldots U_{i_pj_p}^{e_p})$.
\end{enumerate}
If these conditions are satisfied, we say that $\pi$ is stationary on its image.
\end{proposition}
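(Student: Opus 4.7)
The plan is to show the chain $(1)\Rightarrow(2)\Rightarrow(3)\Rightarrow(1)$, with $(1)\Rightarrow(2)$ being immediate and $(2)\Leftrightarrow(3)$ coming from a direct computation on generators. The genuine content lies in $(3)\Rightarrow(1)$, for which I would invoke Theorem 8.4 together with the Hopf image construction from Definition 8.3.

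For $(1)\Rightarrow(2)$, if $\mathrm{Im}(\pi)$ carries a Hopf $C^*$-algebra structure and $\psi$ restricts to its Haar state, then the bi-invariance property of the Haar state, applied via $\psi*\psi=(\psi\otimes\psi)\Delta$, gives the idempotency immediately.

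For $(2)\Leftrightarrow(3)$, I would compute $\psi*\psi$ on a general monomial $a=u_{i_1j_1}^{e_1}\cdots u_{i_pj_p}^{e_p}$. Iterating the comultiplication formula (and using that $\Delta$ is a $*$-homomorphism, so $\Delta(u_{ij}^*)=\sum_k u_{ik}^*\otimes u_{kj}^*$) yields
$$\Delta(a)=\sum_{k_1,\ldots,k_p}u_{i_1k_1}^{e_1}\cdots u_{i_pk_p}^{e_p}\otimes u_{k_1j_1}^{e_1}\cdots u_{k_pj_p}^{e_p},$$
so applying $\psi\otimes\psi$ and recognizing the matrix $T_e$ from Proposition 8.5 (with $r=1$) produces
$$(\psi*\psi)(a)=\sum_{k}(T_e)_{i_1\ldots i_p,k_1\ldots k_p}(T_e)_{k_1\ldots k_p,j_1\ldots j_p}=(T_e^2)_{i_1\ldots i_p,j_1\ldots j_p},$$
while $\psi(a)=(T_e)_{i_1\ldots i_p,j_1\ldots j_p}$. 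Since the monomials in the $u_{ij}^{e}$ span a dense $*$-subalgebra of $C(G)$ and both $\psi$ and $\psi*\psi$ are continuous linear functionals, the equality $\psi*\psi=\psi$ is equivalent to $T_e^2=T_e$ for every colored exponent $e$.

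The main obstacle is $(3)\Rightarrow(1)$, i.e.\ promoting the numerical idempotence into a Hopf-algebraic stationarity statement. Here I would pass to the Hopf image $C(G)\twoheadrightarrow C(H)\to M_K(C(X))$ of $\pi$, so that $\psi$ descends to a state $\tilde\psi$ on $C(H)$. Since the idempotence $\psi*\psi=\psi$ transports across the surjection $C(G)\twoheadrightarrow C(H)$, we get $\tilde\psi^{*r}=\tilde\psi$ for all $r\geq 1$, and then the Woronowicz--Van Daele Ces\`aro formula from Theorem 8.4, applied on $C(H)$ with starting state $\tilde\psi$, forces
$$\int_H=\lim_{k\to\infty}\frac{1}{k}\sum_{r=1}^{k}\tilde\psi^{*r}=\tilde\psi.$$
Thus $\tilde\psi$ is the Haar state of $H$. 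Passing to the reduced version $C(H)_{red}$, the Haar state becomes faithful, and the induced map $C(H)_{red}\to M_K(C(X))$ is state-preserving with respect to the faithful $\int_H$ on the source; this forces it to be injective, so that $\mathrm{Im}(\pi)\cong C(H)_{red}$. This identifies $\mathrm{Im}(\pi)$ with a Hopf $C^*$-algebra on which $\psi$ acts as the Haar integration, establishing (1) and closing the equivalence.
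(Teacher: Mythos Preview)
Your proof is correct and follows essentially the same route as the paper: the equivalence $(2)\Leftrightarrow(3)$ via the formula of Proposition~8.5, and the hard direction via the Hopf image factorization together with the Ces\`aro formula of Theorem~8.4. The only organizational difference is that the paper performs the Hopf image reduction \emph{first} (so that one may assume $\pi$ inner faithful throughout), and in the final step it invokes the type~I/nuclearity/coamenability argument after Definition~8.6 to conclude that $C(H)_{full}=C(H)_{red}$, whereas you stop at $\mathrm{Im}(\pi)\cong C(H)_{red}$; strictly speaking, to match condition~(1) as stated (with a bounded counit, as in Definition~2.4) you would also want to observe that $C(H)_{red}\subset M_K(C(X))$ forces $H$ to be coamenable.
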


\begin{proof}
Let us factorize our matrix model, as in Definition 8.3 above:
$$\pi:C(G)\to C(G')\to M_K(C(X))$$

Now observe that the conditions (1,2,3) only depend on the factorized representation $\pi':C(G')\to M_K(C(X))$. Thus, we can assume $G=G'$, which means that we can assume that $\pi$ is inner faithful. We can therefore use the formula in Theorem 8.4:
$$\int_G=\lim_{k\to\infty}\frac{1}{k}\sum_{r=1}^k\psi^{*r}$$

$(1)\implies(2)$ This is clear from definitions, because the Haar integration on any quantum group satisfies the equation $\psi*\psi=\psi$.

$(2)\implies(1)$ Assuming $\psi*\psi=\psi$, we have $\psi^{*r}=\psi$ for any $r\in\mathbb N$, and the above Ces\`aro limiting formula gives $\int_G=\psi$. By using now the amenability arguments explained after Definition 8.6, we obtain as well that $\pi$ is faithful, as desired.

In order to establish now $(2)\Longleftrightarrow(3)$, we use the formula in Proposition 8.5:
$$\psi^{*r}(u_{i_1j_1}^{e_1}\ldots u_{i_pj_p}^{e_p})=(T_e^r)_{i_1\ldots i_p,j_1\ldots j_p}$$

$(2)\implies(3)$ Assuming $\psi*\psi=\psi$, by using the above formula at $r=1,2$ we obtain that the matrices $T_e$ and $T_e^2$ have the same coefficients, and so they are equal.

$(3)\implies(2)$ Assuming $T_e^2=T_e$, by using the above formula at $r=1,2$ we obtain that the linear forms $\psi$ and $\psi*\psi$ coincide on any product of coefficients $u_{i_1j_1}^{e_1}\ldots u_{i_pj_p}^{e_p}$. Now since these coefficients span a dense subalgebra of $C(G)$, this gives the result.
\end{proof}

As a basic application of the above result, we have:

\begin{theorem}
The standard matrix models for the algebras $C(O_N^*),C(U_N^*)$, constructed in Theorem 8.2 above, are stationary.
\end{theorem}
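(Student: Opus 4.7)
My plan is to invoke Proposition 8.7: since Theorem 8.2 already supplies faithfulness of both embeddings, it will suffice to verify any one of the three equivalent conditions (1)--(3) there in order to conclude stationarity. I will focus on (3), i.e., the idempotency of each matrix $T_e$.

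For $O_N^*$, with $\pi(u_{ij}) = \bigl(\begin{smallmatrix} 0 & v_{ij} \\ \bar v_{ij} & 0\end{smallmatrix}\bigr)$ and $v_{ij}$ the standard $U_N$ coordinates, I first note that the self-adjointness of $u_{ij}$ makes the color vector $e$ irrelevant. A product $U_{i_1j_1}\cdots U_{i_pj_p}$ of antidiagonal matrices is antidiagonal for odd $p$ (hence traceless, so $T_e=0$, which agrees with $P_2^*(p)=\emptyset$ in odd length) and diagonal for even $p$, with entries consisting of alternating $v/\bar v$-words. The next step is to apply the Weingarten formula of Theorem 3.10 to $U_N$, which is easy with category $\mathcal P_2$ by Theorem 4.4; this rewrites $(T_e)_{i_1\ldots i_p,\,j_1\ldots j_p}$ as a sum $\sum_{\pi,\sigma}\delta_\pi(i)\delta_\sigma(j)W_p(\pi,\sigma)$ restricted to the $\pi,\sigma \in \mathcal P_2(p)$ that are compatible with the alternating coloring. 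A direct combinatorial bijection then identifies these with the pairings of $P_2^*(p)$ governing $O_N^*$; since the Gram matrices in both cases share the common form $G_p(\pi,\sigma)=N^{|\pi\vee\sigma|}$, the Weingarten weights transfer unchanged, and we obtain the identity $(T_e)_{i_1\ldots i_p,\,j_1\ldots j_p} = \int_{O_N^*} u_{i_1j_1}\cdots u_{i_pj_p}$. By Proposition 3.5 this is the orthogonal projection onto $\mathrm{Fix}(u^{\otimes p})$, in particular $T_e^2 = T_e$, as required.

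For $C(U_N^*)\subset M_2(C(U_N\times U_N))$ the plan is identical, replacing the conjugate $\bar v_{ij}$ in the antidiagonal by a second, independent $U_N$ coordinate $w_{ij}$. Now all color patterns $e\in\{1,*\}^p$ are genuinely present, and the matching $\mathcal P_2$ pairings arising on the Weingarten side correspond bijectively to $\mathcal P_2^*(p)$, which by Theorem 4.4 is exactly the category governing $U_N^*$. The rest of the argument is unchanged, and again Proposition 8.7(3) closes the case.

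The main obstacle will be the combinatorial bookkeeping in the above bijection --- aligning the clockwise $\circ\bullet\circ\bullet\ldots$ labeling convention of Theorem 4.4 with the alternation forced by the antidiagonal matrix structure, and checking that the Gram-matrix entries and Weingarten weights really do transfer unchanged between $\mathcal P_2$ and $P_2^*$ (resp.\ $\mathcal P_2^*$). This is carried out in detail in \cite{bb2}, \cite{bch}, \cite{bdu}; once these correspondences are in place, Proposition 8.7 directly yields stationarity of both models.
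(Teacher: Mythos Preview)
Your proposal is correct and takes essentially the same approach as the paper: the paper's proof is simply ``a routine Haar measure computation, which enhances the algebraic considerations from the proof of Theorem 8.2,'' with a reference to \cite{bch}, and what you have written is precisely a sketch of that computation---using the antidiagonal structure to reduce $(tr\otimes\int_X)\pi$ to a $U_N$ (resp.\ $U_N\times U_N$) Weingarten integral, then matching the resulting $\mathcal P_2$-sums with the $P_2^*$ (resp.\ $\mathcal P_2^*$) Weingarten formula for $O_N^*$ (resp.\ $U_N^*$). One small remark: by establishing $(T_e)_{ij}=\int_G u_{i_1j_1}^{e_1}\cdots u_{i_pj_p}^{e_p}$ you are in fact verifying the stationarity condition of Definition~8.6 directly, so the detour through Proposition~8.7(3) is not really needed---the idempotency $T_e^2=T_e$ is a consequence rather than the crux of the argument.
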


\begin{proof}
This follows indeed from a routine Haar measure computation, which enhances the algebraic considerations from the proof of Theorem 8.2. See \cite{bch}.
\end{proof}

There are many other interesting examples of stationary models, including the Pauli matrix model for the algebra $C(S_4^+)$, discussed in \cite{bbc}. We refer to \cite{bch} and to subsequent papers for more on this subject, and for some recent results on the non-stationary case as well. There might be actually a relation here with lattice models too \cite{fad}.

Finally, many interesting questions arise in relation with Connes' noncommutative geometry \cite{con}, and we refer here to \cite{bdd}, \cite{cfk}, \cite{dgo}, \cite{gos}. Also, we refer to \cite{bcv}, \cite{chi}, \cite{vve}, \cite{vvo} for more specialized analytic aspects, and to \cite{ksp} and subsequent papers for free de Finetti theorems \cite{ksp}, in the spirit of Voiculescu's free probability theory \cite{vdn}.


\begin{thebibliography}{99}

\bibitem{ba1}T. Banica, Liberations and twists of real and complex spheres, {\em J. Geom. Phys.} {\bf 96} (2015), 1--25.

\bibitem{ba2}T. Banica, Unitary easy quantum groups: geometric aspects, {\em J. Geom. Phys.} {\bf 126} (2018), 127--147.

\bibitem{bb+}T. Banica, S.T. Belinschi, M. Capitaine and B. Collins, Free Bessel laws, {\em Canad. J. Math.} {\bf 63} (2011), 3--37.

\bibitem{bb1}T. Banica and J. Bichon, Hopf images and inner faithful representations, {\em Glasg. Math. J.} {\bf 52} (2010), 677--703.

\bibitem{bb2}T. Banica and J. Bichon, Matrix models for noncommutative algebraic manifolds, {\em J. Lond. Math. Soc.} {\bf 95} (2017), 519--540.

\bibitem{bbc}T. Banica, J. Bichon and B. Collins, Quantum permutation groups: a survey, {\em Banach Center Publ.} {\bf 78} (2007), 13--34.

\bibitem{bch}T. Banica and A. Chirvasitu, Thoma type results for discrete quantum groups, {\em Internat. J. Math.} {\bf 28} (2017), 1--23.

\bibitem{bco}T. Banica and B. Collins, Integration over compact quantum groups, {\em Publ. Res. Inst. Math. Sci.} {\bf 43} (2007), 277--302.

\bibitem{bcu}T. Banica and S. Curran, Decomposition results for Gram matrix determinants, {\em J. Math. Phys.} {\bf 51} (2010), 1--14.

\bibitem{bcs}T. Banica, S. Curran and R. Speicher, Classification results for easy quantum groups, {\em Pacific J. Math.} {\bf 247} (2010), 1--26.

\bibitem{bfs}T. Banica, U. Franz and A. Skalski, Idempotent states and the inner linearity property, {\em Bull. Pol. Acad. Sci. Math.} {\bf 60} (2012), 123--132.

\bibitem{bne}T. Banica and I. Nechita, Block-modified Wishart matrices and free Poisson laws, {\em Houston J. Math.} {\bf 41} (2015), 113--134.

\bibitem{bpa}T. Banica and I. Patri, Maximal torus theory for compact quantum groups, {\em Illinois J. Math.} {\bf 61} (2017), 151--170.

\bibitem{bss}T. Banica, A. Skalski and P.M. So\l tan, Noncommutative homogeneous spaces: the matrix case, {\em J. Geom. Phys.} {\bf 62} (2012), 1451--1466.

\bibitem{bsp}T. Banica and R. Speicher, Liberation of orthogonal Lie groups, {\em Adv. Math.} {\bf 222} (2009), 1461--1501.

\bibitem{bve}T. Banica and R. Vergnioux, Invariants of the half-liberated orthogonal group, {\em Ann. Inst. Fourier} {\bf 60} (2010), 2137--2164.

\bibitem{bep}H. Bercovici and V. Pata, Stable laws and domains of attraction in free probability theory, {\em Ann. of Math.} {\bf 149} (1999), 1023--1060.

\bibitem{bdd}J. Bhowmick, F. D'Andrea and L. Dabrowski, Quantum isometries of the finite noncommutative geometry of the standard model, {\em Comm. Math. Phys.} {\bf 307} (2011), 101--131.

\bibitem{bdu}J. Bichon and M. Dubois-Violette, Half-commutative orthogonal Hopf algebras, {\em Pacific J. Math.} {\bf 263} (2013), 13--28. 

\bibitem{bcv}M. Brannan, B. Collins and R. Vergnioux, The Connes embedding property for quantum group von Neumann algebras, {\em Trans. Amer. Math. Soc.} {\bf 369} (2017), 3799--3819. 

\bibitem{bra}R. Brauer, On algebras which are connected with the semisimple continuous groups, {\em Ann. of Math.} {\bf 38} (1937), 857--872.

\bibitem{chi}A. Chirvasitu, Residually finite quantum group algebras, {\em J. Funct. Anal.} {\bf 268} (2015), 3508--3533.

\bibitem{cfk}F. Cipriani, U. Franz and A. Kula, Symmetries of L\'evy processes on compact quantum groups, their Markov semigroups and potential theory, {\em J. Funct. Anal.} {\bf 266} (2014), 2789--2844. 

\bibitem{con}A. Connes, Noncommutative geometry, Academic Press (1994).

\bibitem{dpr}A. D'Andrea, C. Pinzari and S. Rossi, Polynomial growth for compact quantum groups, topological dimension and *-regularity of the Fourier algebra, preprint 2016.

\bibitem{dgo}B. Das and D. Goswami, Quantum Brownian motion on noncommutative manifolds: construction, deformation and exit times, {\em  Comm. Math. Phys.} {\bf 309} (2012), 193--228.

\bibitem{fad}L. Faddeev, Instructive history of the quantum inverse scattering method, {\em Acta Appl. Math.} {\bf 39} (1995), 69--84.

\bibitem{fsk}U. Franz and A. Skalski, On idempotent states on quantum groups, {\em J. Algebra} {\bf 322} (2009), 1774--1802.

\bibitem{fre}A. Freslon, On the partition approach to Schur-Weyl duality and free quantum groups, {\em Transform. Groups} {\bf 22} (2017), 707--751.

\bibitem{fsn}M. Fukuda and P. \'Sniady, Partial transpose of random quantum states: exact formulas and meanders, {\em J. Math. Phys.} {\bf 54} (2013), 1--31.

\bibitem{gos}D. Goswami, Quantum group of isometries in classical and  noncommutative geometry, {\em Comm. Math. Phys.} {\bf 285} (2009), 141--160.

\bibitem{ksp}C. K\"ostler, R. Speicher, A noncommutative de Finetti theorem: invariance under quantum permutations is equivalent to freeness with amalgamation, {\em Comm. Math. Phys.} {\bf 291} (2009), 473--490.

\bibitem{kva}J. Kustermans and S. Vaes, Locally compact quantum groups, {\em Ann. Sci. Ecole Norm. Sup.} {\bf 33} (2000), 837--934.

\bibitem{lta}F. Lemeux and P. Tarrago, Free wreath product quantum groups: the monoidal category, approximation properties and free probability, {\em J. Funct. Anal.} {\bf 270} (2016), 3828--3883.

\bibitem{mva}A. Maes and A. Van Daele, Notes on compact quantum groups, {\em Nieuw Arch. Wisk.} {\bf 16} (1998), 73--112. 

\bibitem{mal}S. Malacarne, Woronowicz's Tannaka-Krein duality and free orthogonal quantum groups, {\em Math. Scand.} {\bf 122} (2018), 151--160.

\bibitem{mpo}J.A. Mingo and M. Popa, Freeness and the partial transposes of Wishart random matrices, preprint 2017.

\bibitem{ntu}S. Neshveyev and L. Tuset, Compact quantum groups and their representation categories, SMF (2013).

\bibitem{ped}G.K. Pedersen, C$^*$-algebras and their automorphism groups, Academic Press (1979).

\bibitem{pwo}P. Podle\'s and S.L. Woronowicz, Quantum deformation of Lorentz group, {\em Comm. Math. Phys.} {\bf 130} (1990), 381--431.

\bibitem{rwe}S. Raum and M. Weber, The full classification of orthogonal easy quantum groups, {\em Comm. Math. Phys.} {\bf 341} (2016), 751--779.

\bibitem{twe}P. Tarrago and M. Weber, Unitary easy quantum groups: the free case and the group case, preprint 2015.

\bibitem{vve}S. Vaes and R. Vergnioux, The boundary of universal discrete quantum groups, exactness and factoriality, {\em Duke Math. J.} {\bf 140} (2007), 35--84.

\bibitem{vvo}R. Vergnioux and C. Voigt, The K-theory of free quantum groups, {\em Math. Ann.} {\bf 357} (2013), 355--400. 

\bibitem{vdn}D.V. Voiculescu, K.J. Dykema and A. Nica, Free random variables, AMS (1992).

\bibitem{wa1}S. Wang, Free products of compact quantum groups, {\em Comm. Math. Phys.} {\bf 167} (1995), 671--692.

\bibitem{wa2}S. Wang, Quantum symmetry groups of finite spaces, {\em Comm. Math. Phys.} {\bf 195} (1998), 195--211.

\bibitem{wa3}S. Wang, $L_p$-improving convolution operators on finite quantum groups, {\em Indiana Univ. Math. J.} {\bf 65} (2016), 1609--1637.

\bibitem{wen}H. Wenzl, C$^*$-tensor categories from quantum groups, {\em J. Amer. Math. Soc.} {\bf 11} (1998), 261--282.

\bibitem{wo1}S.L. Woronowicz, Twisted $SU(2)$ group. An example of a
non-commutative differential calculus, {\em Publ. Res. Inst. Math. Sci.} {\bf 23} (1987), 117--181.

\bibitem{wo2}S.L. Woronowicz, Compact matrix pseudogroups, {\em Comm. Math. Phys.} {\bf 111} (1987), 613--665.

\bibitem{wo3}S.L. Woronowicz, Tannaka-Krein duality for compact matrix pseudogroups. Twisted SU(N) groups, {\em Invent. Math.} {\bf 93} (1988), 35--76.

\end{thebibliography}
\end{document}